\newtheorem{Thm}{Theorem}[section]
\newtheorem{Lem}[Thm]{Lemma}
\numberwithin{equation}{section}
\newcommand{\1}{\mathbf{1}}
\newcommand{\R}{\mathbb{R}}
\newcommand{\Rd}{{\mathbb{R}^3}}
\renewcommand{\P}{\mathbf{P}}
\newcommand{\N}{\mathbb{N}}
\newcommand{\E}{\mathcal{E}}
\newcommand{\D}{\mathcal{D}}
\newcommand{\ve}{\varepsilon}
\newcommand{\vt}{\vartheta}
\newcommand{\dis}{\displaystyle}
\newcommand{\pa}{\partial}
\newcommand{\na}{\nabla}
\newcommand{\al}{\alpha}
\newcommand{\vertiii}[1]{{\left\vert\kern-0.25ex\left\vert\kern-0.25ex\left\vert #1 \right\vert\kern-0.25ex\right\vert\kern-0.25ex\right\vert}}
\newcommand{\<}{\langle}
\renewcommand{\>}{\rangle}
\newcommand{\I}{\mathbf{I}}
\newcommand{\II}{\mathbf{I}_{\pm}}
\renewcommand{\P}{\mathbf{P}}
\newcommand{\PP}{\mathbf{P}_{\pm}}
\title[The VPB and VPL system in union of cubes]{The Non-cutoff Vlasov-Poisson-Boltzmann and Vlasov-Poisson-Landau Systems in Union of Cubes}
\begin{document}
	
	\author{Dingqun Deng}
	\address{Yanqi Lake Beijing Institute of Mathematical Sciences and Applications, Tsinghua University, Beijing, People's Republic of China}
	\curraddr{}
	\email{dingqun.deng@gmail.com}
	\thanks{}

	\date{}
	
	\subjclass[2010]{Primary 76P05; Secondary 35Q20, 82C40.}
	\keywords{Bounded domain \and Vlasov-Poisson-Boltzmann system \and Vlasov-Poisson-Landau system \and Specular boundary condition}
	
	\begin{abstract}
		This work concerns the Vlasov-Poisson-Boltzmann system without angular cutoff and Vlasov-Poisson-Landau system including Coulomb interaction in bounded domain, namely union of cubes. We establish the global stability, exponential large-time decay with specular-reflection boundary condition when an initial datum is near Maxwellian equilibrium. We provide the compatible specular boundary condition for high-order derivatives and a velocity weighted energy estimate.  
	\end{abstract}
	
	\maketitle

	\tableofcontents

	\section{Introduction}

	\subsection{Equation}
	
	We consider the Vlasov-Poisson-Boltzmann (VPB) and Vlasov-Poisson-Landau (VPL) systems describing the motion of plasma particles of two species in domain $\Omega$:
	\begin{equation}\label{F1}\left\{
		\begin{aligned}
			&\pa_tF_+ + v\cdot \na_xF_+ - \na_x\phi\cdot\na_vF_+ = Q(F_+,F_+)+Q(F_-,F_+),\\
			&\pa_tF_- + v\cdot \na_xF_- + \na_x\phi\cdot\na_vF_- = Q(F_+,F_-)+Q(F_-,F_-),\\
			&-\Delta_x\phi = \int_{\R^3}(F_+-F_-)\,dv,\\ 
			& F_\pm(0,x,v)=F_{0,\pm}(x,v),\quad E(0,x)=E_0(x).
		\end{aligned}\right. 
	\end{equation}
	Here, the unknown $F=[F_+,F_-]$ is the velocity distribution functions for the particles of ions $(+)$ and electrons $(-)$, respectively, at position $x\in\Omega$ and velocity $v\in\R^3$ and time $t\ge 0$. 
	The self-consistent electrostatic field takes the form $E(t,x)= -\na_x\phi(t,x)$. 
	The boundary condition for $(f,E)$ will be given in \eqref{specular} and \eqref{Neumann}. Next we introduce the collision operator $Q$ first.

	\medskip 
	For Vlasov-Poisson-Landau system, the collision operator $Q$ is given by 
	\begin{align*}
		Q(G,F)&=\nabla_v\cdot\int_{\R^3}\phi(v-v')\big[G(v')\nabla_vF(v)-F(v)\nabla_vG(v')\big]\,dv'.
	\end{align*}
	The non-negative definite matrix-valued function $\phi=[\phi^{ij}(v)]_{1\leq i,j\leq 3}$ takes the form of 
	\begin{align}\label{phiij}
		\phi^{ij}(v) = \Big\{\delta_{ij}-\frac{v_iv_j}{|v|^2}\Big\}|v|^{\gamma+2},
	\end{align}
	with $\gamma\ge -3$. It is convenient to call it {\em hard potential} when $\gamma\ge -2$ and {\em soft potential} when $-3\le\gamma<-2$. The case $\gamma=-3$ corresponds to the physically realistic Coulomb interactions; cf. \cite{Guo2002a}.

	\smallskip
	For Vlasov-Poisson-Boltzmann system, the collision operator $Q$ is defined by 
	\begin{align*}
		Q(G,F) = \int_{\R^3}\int_{\mathbb{S}^{2}} B(v-v_*,\sigma)\big[G(v'_*)F(v')-G(v_*)F(v)\big]\,d\sigma dv_*.
	\end{align*}  
	In this expression $v,v_*$ and $v',v'_*$ are velocity pairs given in terms of the $\sigma$-representation by
	\begin{align*}
		v'=\frac{v+v_*}{2}+\frac{|v-v_*|}{2}\sigma,\quad v'_*=\frac{v+v_*}{2}-\frac{|v-v_*|}{2}\sigma,\quad \sigma\in\mathbb{S}^2,
	\end{align*}
	that satisfy 
	conservation laws of momentum and energy:
	\begin{equation*}
			v+v_*=v'+v'_*,\quad 
		|v|^2+|v_*|^2=|v'|^2+|v'_*|^2.
	\end{equation*}
	The Boltzmann collision kernel $B(v-v_*,\sigma)$ depends only on $|v-v_*|$ and the deviation angle $\theta$ through $\cos\theta=\frac{v-v_*}{|v-v_*|}\cdot\sigma$. Without loss of generality we can assume $B(v-v_*,\sigma)$ is supported on $0\le\theta\le\pi/2$, since one can reduce the situation with {\it symmetrization}: $\overline{B}(v-v_*,\sigma)={B}(v-v_*,\sigma)+{B}(v-v_*,-\sigma)$. Moreover, we assume 
	\begin{equation*}
		B(v-v_*,\sigma) = |v-v_*|^\gamma b(\cos\theta),
	\end{equation*}
	and there exist $C_b>0$ and $0<s<1$ such that  
	\begin{align*}
		\frac{1}{C_b\theta^{1+2s}}\le \sin\theta b(\cos\theta)\le \frac{C_b}{\theta^{1+2s}}, \quad\forall\,\theta\in (0,\frac{\pi}{2}].
	\end{align*} 
	It is convenient call it {\em hard potential} when $\gamma+2s\ge 0$ and {\em soft potential} when $-3<\gamma+2s<0$. 
	Throughout the paper, we will assume 
	\begin{align*}
		&-3\le\gamma\le 1 \text{ for Landau case,}\\ &\max\{-3,-2s-\frac{3}{2}\}<\gamma<1-2s,\ 0<s<1\  \text{ for Boltzmann case}. 
	\end{align*} 
	Note that we consider the full range of $0<s<1$ for Boltzmann case. 
	
	
We reformulate problem \eqref{F1} near a global Maxwellian as the following. 
	Let $\mu$ be the global Maxwellian equilibrium state:
	\begin{equation*}
		\mu = \mu(v) = (2\pi)^{-3/2}e^{-|v|^2/2}. 
	\end{equation*}
	We construct a solution to \eqref{F1} of the form 
	\begin{equation*}
		F(t,x,v) = \mu + \mu^{1/2}f(t,x,v).
	\end{equation*}
	Then $f=[f_+,f_-]$ satisfies 
	\begin{equation}\label{1}\left\{
		\begin{aligned}
			&\partial_tf_\pm + v\cdot\nabla_xf_\pm \pm \frac{1}{2}\nabla_x\phi\cdot vf_\pm  \mp\nabla_x\phi\cdot\nabla_vf_\pm \pm \nabla_x\phi\cdot v\mu^{1/2} - L_\pm f = \Gamma_{\pm}(f,f),\\
			&
			-\Delta_x\phi = \int_{\R^3}(f_+-f_-)\mu^{1/2}\,dv,\\
			&f(0,x,v) = f_0(x,v),\quad E(0,x)=E_0(x), 
		\end{aligned}\right.
	\end{equation}
	where the linearized collision operator $L=[L_+,L_-]$ and nonlinear collision operator $\Gamma=[\Gamma_+,\Gamma_-]$ are given respectively by 
	\begin{equation*}
		L_\pm f = \mu^{-1/2}\Big\{2Q(\mu,\mu^{1/2}f_\pm)+Q(\mu^{1/2}(f_\pm+f_\mp),\mu)\Big\},
	\end{equation*}
	and 
	\begin{equation*}
		\Gamma_\pm(f,g) =  \mu^{-1/2}\Big\{Q(\mu^{1/2}f_\pm,\mu^{1/2}g_\pm)+Q(\mu^{1/2}f_\mp,\mu^{1/2}g_\pm)\Big\}.
	\end{equation*}
	The kernel of $L$ on $L^2_v\times L^2_v$ is the span of $\{[1,0]\mu^{1/2},[0,1]\mu^{1/2},[1,1]v\mu^{1/2},[1,1]|v|^2\mu^{1/2}\}$ and we define the projection $\P=[\P_+,\P_-]$ from $L^2_v\times L^2_v$ onto $\ker L$ to be 
	\begin{equation*}
		\P f = \Big(a_+(t,x)[1,0]+a_-(t,x)[0,1]+v\cdot b(t,x)[1,1]+(|v|^2-3)c(t,x)[1,1]\Big)\mu^{1/2},
	\end{equation*}
where functions $a_\pm,b,c$ are given by 
\begin{equation}
\begin{aligned}\label{abc}
	a_\pm &= (\mu^{1/2},f_\pm)_{L^2_v},\\
	b_j&= \frac{1}{2}(v_j\mu^{1/2},f_++f_-)_{L^2_v} ,\\
	c&=\frac{1}{12}((|v|^2-3)\mu^{1/2},f_++f_-)_{L^2_v}. 
\end{aligned}
\end{equation}
	Then for given $f$, one can decompose $f$ uniquely as the macroscopic part microscopic part:
	\begin{equation*}
		f = \P f+ (\I-\P)f. 
	\end{equation*}
	It's well-known that the solution to \eqref{1} satisfies the conservation laws on mass and energy. That is, the solution $f$ to \eqref{1} satisfies the following identities whenever it's satisfied initially at $t=0$: 
	\begin{equation}\label{conservation_bounded}\left\{\begin{aligned}
			&\int_{\Omega\times\R^3}f_+(t)\mu^{1/2}\,dvdx = \int_{\Omega\times\R^3}f_-(t)\mu^{1/2}\,dvdx =0,\\
			&\int_{\Omega\times\R^3}(f_+(t)+f_-(t))|v|^2\mu^{1/2}\,dvdx +\int_{\Omega}|E(t)|^2\,dx= 0.
		\end{aligned}\right.
	\end{equation}
	
	\subsection{Spatial Domain}
	In this paper, we consider a domain $\Omega$ that is the union of finitely many cubes:
	\begin{align}\label{Omega}
		\Omega=\cup_{i=1}^N\Omega_i,
	\end{align}  where $\Omega_i = (a_{i,1},b_{i,1})\times(a_{i,2},b_{i,2})\times(a_{i,3},b_{i,3})$ with $a_{i,j},b_{i,j}\in\R$ such that $a_{i,j}<b_{i,j}$. Then
	$\partial\Omega=\cup_{i=1}^3\Gamma_i$ is the union of three kinds of boundary $\Gamma_i$ $(i=1,2,3)$, where $\Gamma_i$ is orthogonal to axis $x_i$ and is the union of finitely many connected sets. We further assume that $\Gamma_i$ is of {\em non-zero} spherical measure. 
	Since the boundary of $\Gamma_i$'s are of {\em zero} spherical measure, we don't distinguish $\Gamma_i$ and the interior of $\Gamma_i$. Note that $\Omega$ could be {\it non-convex} and be closed to general bounded domains arbitrarily.
	
	The unit normal outer vector $n(x)$ exists on $\partial\Omega$ almost everywhere with respect to spherical measure. On the interior of $\Gamma_i$$(i=1,2,3)$, we have $n(x) = e_i$ or $-e_i$, where $e_i$ is the unit vector with $i$th-component being $1$. We will denote vectors $\tau_1(x), \tau_2(x)$ on boundary $\partial\Omega$ such that $(n(x),\tau_1(x), \tau_2(x))$ forms an unit orthonormal basis for $\R^3$ such that for $j=1,2$, $\tau_j=e_k$ or $-e_k$ for some $k$. This implies that $\pa_{\tau_j}$ is the tangent derivative on $\pa\Omega$ for $j=1,2$.

	
	The boundary of the phase space is 
	\begin{align*}
		\gamma:=\{(x,v)\in\partial\Omega\times\R^3\}.
	\end{align*}
	Denoting $n=n(x)$ to be the outward normal direction at $x\in\partial\Omega$, we decompose $\gamma$ as 
	\begin{align*}
		\gamma_- &= \{(x,v)\in\partial\Omega\times\R^3 : n(x)\cdot v<0\},\quad\text{(the incoming set),}\\
		\gamma_+ &= \{(x,v)\in\partial\Omega\times\R^3 : n(x)\cdot v>0\},\quad\text{(the outgoing set),}\\
		\gamma_0 &= \{(x,v)\in\partial\Omega\times\R^3 : n(x)\cdot v=0\},\quad\text{(the grazing set).}
	\end{align*}
	Correspondingly, we assume that $F(t,x,v)$ satisfies the {\it specular-reflection} boundary condition:
	\begin{equation*}
			F(t,x,R_xv) = F(t,x,v), \ \text{ on } \gamma_-, 
	\end{equation*}
where for $(x,v)\in\gamma$, 
\begin{equation*}
	R_xv = v - 2n(x)(n(x)\cdot v). 
\end{equation*}
This is equivalent to the {\it specular reflection} boundary condition for perturbation $f$:
\begin{equation}\label{specular}\begin{aligned}
	f(t,x,R_xv) = f(t,x,v), \ \text{ on } \gamma_-. 
\end{aligned}
\end{equation}
For the boundary condition of electric potential $\phi$, we further assume that
	\begin{align}\label{Neumann}
		\partial_{n}\phi=0, \ \text{ on }x\in \partial\Omega.  
	\end{align}
	In particular, the Poisson equation for potential $\phi$ is a pure Neumann boundary problem and we require zero-mean condition
	\begin{align*}
		\int_\Omega\int_{\R^3}(f_+-f_-)\mu^{1/2}\,dvdx=0,\quad \text{ for } t\ge 0,
	\end{align*}
	to ensure its existence, which follows from \eqref{conservation_bounded}. Also, the zero-mean condition 
	\begin{align*}
		\int_\Omega\phi(t,x)\,dx=0, \ \text{ for } t\ge 0
	\end{align*}
	ensures the uniqueness of solutions.


	%

	For the general theory of Vlasov-Poisson-Boltzmann and Vlasov-Poisson-Landau systems, we refer to \cite{Mischler2000, Guo2002, Guo2012, Duan2013} and reference therein. 
	Mischler \cite{Mischler2000} generalized the existence theory of Diperna-Lions renormalized solutions (cf. \cite{Diperna1989}) to Vlasov-Poisson-Boltzmann system for the initial boundary value problem. 
	Guo \cite{Guo2002} gives the global solution of VPB system near a global Maxwellian for the cutoff case. Guo \cite{Guo2012} establishes the global existence of VPL system with Coulomb potential by introducing a weight $e^{\pm\phi}$. Using this method, Duan-Liu \cite{Duan2013} proves the global existence for VPB system without angular cutoff. 
	   
	For the boundary theory of collisional kinetic problem such as Landau and Boltzmann equations, we refer to \cite{Cercignani1992,Hamdache1992, Mischler2000,Yang2005, Liu2006,Guo2009, Esposito2013, Guo2016,Kim2017,Cao2019, Guo2020, Dong2020}. 
	In the framework of perturbation near a global Maxwellian, initiating by Guo \cite{Guo2009}, which established the $L^2-L^\infty$ method, many results are developed for Boltzmann equation and Landau equation. For instance, Guo, Kim, Tonon and Trescases \cite{Guo2016} give regularity of cutoff Boltzmann equation with several physical boundary conditions in short time. Esposito, Guo, Kim and Marra \cite{Esposito2013} construct a non-equilibrium stationary solution. Kim and Lee \cite{Kim2017} study cutoff Boltzmann equation with specular boundary condition with external potential in $C^3$ bounded domain. Liu and Yang \cite{Liu2016} extend the result in \cite{Guo2009} to cutoff soft potential case. Cao, Kim and Lee \cite{Cao2019} prove the global existence for Vlasov-Poisson-Boltzmann with diffuse boundary condition. Guo, Hwang, Jang and Ouyang \cite{Guo2020} give the global stability of Landau equation with specular reflection boundary. Duan, Liu, Sakamoto and Strain \cite{Duan2020} prove the low regularity solution for Landau and non-cutoff Boltzmann equation in finite channel. 
Dong, Guo and Ouyang \cite{Dong2020} find the global existence for VPL system in general bounded domain with specular boundary condition. 

Unfortunately, the boundary theory for non-cutoff VPB system remains open since many tools for cutoff Boltzmann theory are not applicable to non-cutoff case. 
Our main target is to consider the global stability of VPB system and VPL system in union of cubes. Compare to \cite{Deng2021}, the boundary value for electric potential $\phi$ creates new difficulties and we introduce Neumann boundary condition for $\phi$ to overcome it in an elegant way. This work gives the first existence result on non-cutoff VPB system in bounded domain with specular boundary condition.

	\subsection{Notations}
	Now we give some notations throughout the paper. 
	Let $\<v\>=\sqrt{1+|v|^2}$ and $\1_{S}$ be the indicator function on a set $S$. 
	 Let
	$\partial^\alpha_\beta = \partial^{\alpha_1}_{x_1}\partial^{\alpha_2}_{x_2}\partial^{\alpha_3}_{x_3}\partial^{\beta_1}_{v_1}\partial^{\beta_2}_{v_2}\partial^{\beta_3}_{v_3}$,
	where $\alpha=(\alpha_1,\alpha_2,\alpha_3)$ and $\beta=(\beta_1,\beta_2,\beta_3)$ are multi-indices. If each component of $\beta'$ is not greater than that of $\beta$'s, we denote by $\beta'\le\beta$. 
	The notation $a\approx b$ (resp. $a\gtrsim b$, $a\lesssim b$) for positive real function $a$, $b$ means there exists $C>0$ not depending on possible free parameters such that $C^{-1}a\le b\le Ca$ (resp. $a\ge C^{-1}b$, $a\le Cb$) on their domain.
	We will write $C>0$ (large) to be a generic constant, which may change from line to line. Denote  spaces $L^2_v$, $L^r_{x}L^2_x$ and $L^s_TL^r_xL^2_v$ for $1\le r,s\le\infty$, respectively, as 
	\begin{align*}
		|f|^2_{L^2_v} = \int_{\R^3}|f|^2\,dv,\quad \|f\|_{L^r_{x}L^2_x} = \Big(\int_{\Omega}|f|^r_{L^2_v}\,dx\Big)^{\frac{1}{r}},
		\quad \|f\|_{L^s_TL^r_{x}L^2_x} = \big\|\|f(t)\|_{L^r_{x}L^2_x}\big\|_{L^s([0,T])}.
	\end{align*}
Also, for velocity weighted space, we write 
\begin{align*}
	|f|^2_{L^2_{k}} = \int_{\R^3}\<v\>^{2k}|f|^2\,dv. 
\end{align*}
We will use some tools from pseudo-differential calculus. One may refer to \cite[Chapter 2]{Lerner2010} for more details. Set $\Gamma=|dv|^2+|d\eta|^2$ and let $M$ be an $\Gamma$-admissible weight function. That is, $M:\R^{2d}\to (0,+\infty)$ satisfies the following conditions:
(a) (slowly varying) there exists $\delta>0$ such that, for any $X,Y\in\R^{2d}$, $|X-Y|\le \delta$ implies
\begin{align*}
	M(X)\approx M(Y);
\end{align*}
(b) (temperance) there exists $C>0$, $N\in\R$, such that for $X,Y\in \R^{2d}$,
\begin{align*}
	\frac{M(X)}{M(Y)}\le C\<X-Y\>^N.
\end{align*}
We say that a symbol $a\in S(M)=S(M,\Gamma)$, if for $\alpha,\beta\in \N^d$, $v,\eta\in\Rd$,
\begin{align*}
	|\partial^\alpha_v\partial^\beta_\eta a(v,\eta)|\le C_{\alpha,\beta}M,
\end{align*}with $C_{\alpha,\beta}$ being a constant depending only on $\alpha$ and $\beta$.
We formally define the Weyl quantization by
\begin{align*}
	a^wu(v)=\int_\Rd\int_\Rd e^{2\pi i (v-u)\cdot\eta}a(\frac{v+u}{2},\eta)u(u)\,dud\eta,
\end{align*}for $f\in\mathscr{S}$. A Weyl quantization $a^w$ is said to be in $Op(M)$ if $a\in S(M)$.

		To study the global well-posedness of problem \eqref{1} in union of cubes, we will consider the following function spaces and energy functionals. Firstly We let $\nu\ge 0$ and consider weight function 
	\begin{equation}\label{w22}
		w_{l,\nu}(\alpha,\beta)=
		\left\{\begin{aligned}
			&\<v\>^{l-r|\alpha|-q|\beta|},\qquad \text{ for hard potential,}\\
			&\<v\>^{l-r|\alpha|-q|\beta|}\exp\big({\nu\<v\>}\big)  , \quad \text{ for soft potential,}
		\end{aligned}\right.
	\end{equation}
where 
\begin{equation}
	\begin{aligned}&r = 1,\quad q=1, \ \text{ for hard potential,}\\
		\label{pq}	&r = -\gamma-\frac{2\gamma(1-s)}{s}+1,\quad q=-\frac{2\gamma}{s}+1, \ \text{ for soft potential,}
	\end{aligned}
\end{equation}
and we let $s=1$ for Landau case. 
	
	\smallskip
	For Landau case, we denote
	\begin{align*}
		\sigma^{ij}(v) &= \phi^{ij}*\mu = \int_{\R^3}\phi^{ij}(v-v')\mu(v')\,dv',\\
		\sigma^i(v)&=\sigma^{ij}\frac{v_j}{2}=\phi^{ij}*\big\{\frac{v_j}{2}\mu\big\}.
	\end{align*}
	Here and after repeated indices are implicitly summed over. Define
	\begin{align*}
		|f|^2_{L^2_{D,w}}=\sum^3_{i,j=1}\int_{\R^3}w^2\big(\sigma^{ij}\partial_{v_i}f\partial_{v_j}f+\sigma^{ij}\frac{v_i}{2}\frac{v_j}{2}|f|^2\big)dv,\quad \|f\|^2_{L^2_xL^2_{D,w}}=\int_\Omega|f|_{L^2_{D,w}}^2\,dx,
	\end{align*}
	and $|{f}|^2_{L^2_{D}}=|{f}|^2_{L^2_{D,1}}$.
	Then by \cite[Corollary 1]{Strain2007} and \cite[Lemma 5]{Guo2002a}, we have 
	\begin{align*}\notag
		|{f}|^2_{L^2_{D,w}} &= |w\<v\>^{\frac{\gamma}{2}}P_v\partial_{v_j}f|_{L^2_v}^2 + |w\<v\>^{\frac{\gamma+2}{2}}\{I-P_v\}\partial_{v_j}f|_{L^2_v}^2 + |w\<v\>^{\frac{\gamma+2}{2}}{f}|_{L^2_v}^2,
	\end{align*}
	where $P_v\xi=\frac{\xi\cdot v}{|v|}\frac{v}{|v|}$.

	For Boltzmann case, as in \cite{Gressman2011}, we denote
	\begin{equation*}
		|f|^2_{L^2_D}:=|\<v\>^{\frac{\gamma+2s}{2}}f|^2_{L^2_v}+ \int_{\R^3}dv\,\<v\>^{\gamma+2s+1}\int_{\R^3}dv'\,\frac{(f'-f)^2}{d(v,v')^{3+2s}}\1_{d(v,v')\le 1},
	\end{equation*}
	and 
	\begin{equation*}
		|f|^2_{L^2_{D,w}}=|wf|^2_{L^2_D},\quad \|f\|^2_{L^2_xL^2_{D,w}} = \int_{\Omega}|f|_{L^2_{D,w}}^2\,dx.
	\end{equation*}
	The fractional differentiation effects are measured using the anisotropic metric on the {\it lifted} paraboloid
	$d(v,v'):=\{|v-v'|^2+\frac{1}{4}(|v|^2-|v'|^2)^2\}^{1/2}$.
	%
	Then by \cite[eq. (2.15)]{Gressman2011}, we have 
	\begin{align*}
		|\<v\>^{\frac{\gamma}{2}}\<D_v\>^sf|^2_{L^2_v}+|\<v\>^{\frac{\gamma+2s}{2}}f|^2_{L^2_v}\lesssim |f|^2_{L^2_D}\lesssim |\<v\>^{\frac{\gamma+2s}{2}}\<D_v\>^sf|^2_{L^2_v}
	\end{align*}

	We will consider function space $
		H^3_{x,v}$ for our analysis. 
Correspondingly, we define the ``instant energy functional" $\E_\nu(t)$ and ``dissipation rate functional" $\D_\nu(t)$ respectively by 	
	\begin{equation}\label{E1}
		\E_\nu(t) \approx
			\sum_{\substack{|\alpha|+|\beta|\le 3}}\Big(\| w_{l,\nu}(\alpha,\beta)\pa^\alpha_\beta f\|^2_{L^2_xL^2_v} + \|\pa^\alpha E\|^2_{L^2_x}\Big),
	\end{equation}
	and
	\begin{equation}\
		\label{D}\D_\nu(t) =
			\sum_{|\alpha|+|\beta|\le 3}\Big(\|w_{l,\nu}(\alpha,\beta)\pa^\alpha_\beta f\|^2_{L^2_xL^2_D} + \|\pa^\alpha E\|^2_{L^2_x}\Big).
	\end{equation}
	We will let $\E(t)=\E_0(t)$ and $\D(t)=\D_0(t)$ to be the energy functional without exponential weight. 
	
	\smallskip
	To obtain the rate of convergence, associated with weight function $w_{l,\nu}(\alpha,\beta)$ given by \eqref{w22}, we let $p\in(0,1]$ be defined by
	\begin{equation}\label{p}
		p = \left\{
		\begin{aligned}
			&1,\qquad\qquad\qquad \text{ for hard potential in both Boltzmann and Landau case, }\\
			&\frac{1}{-\gamma-2s+1},\quad\text{ for soft potential in Boltzmann case,}\\
			&\frac{1}{-\gamma-2+1},\qquad \text{for soft potential in Landau case.}
		\end{aligned}
		\right.
	\end{equation}
	We then are able to show that the obtained solutions decay in time as 
	\begin{align*}
		\E(t)\lesssim e^{-\delta t^p}\E(0), 
	\end{align*}
	with some $\delta>0$.

	\subsection{Main Results}
	
	In this section, we state our main results on global well-posedness of Vlasov-Poisson-Landau systems and Vlasov-Poisson-Boltzmann systems.

	\begin{Thm}\label{Theorem_bounded}
		Let $\Omega$ be defined by \eqref{Omega} and $w_{l,\nu}(\alpha,\beta)$ be given by \eqref{w22}.
		Let $\gamma\ge -3$ for Landau case and $(\gamma,s)\in\{-\frac{3}{2}<\gamma+2s\le 1,
		\ 0<s<1\}$ for Boltzmann case. 
		Let $l\ge 3q$ with $q$ is given in \eqref{pq}.  
		There exists $\varepsilon_0,\nu>0$ such that if $F_0(x,v)=\mu+\mu^{1/2}f_0(x,v)\ge 0$ satisfying \eqref{conservation_bounded} and 
		\begin{align}\label{small}
			\sum_{|\alpha|+|\beta|\le 3}\big(\|{w_{l,\nu}(\alpha,\beta)\partial^\alpha_\beta f_0}\|^2_{L^2_{x,v}}+\|{\partial^\alpha E_0}\|^2_{L^2_{x}}\big)\le \varepsilon_0,
		\end{align}
		then there exists a unique solution $f(t,x,v)$ to the specular reflection boundary problem \eqref{1}, \eqref{specular} and \eqref{Neumann}, satisfying that $F(t,x,v)=\mu+\mu^{1/2}f(t,x,v)\ge 0$ and for any $T>0$,
		\begin{align*}
			\sup_{0\le t\le T}e^{\delta t^p}\E(t) + \sup_{0\le t\le T}\E_\nu(t)\lesssim \ve_0,
		\end{align*}
		where $\E_\nu(t)$ and $\E(t)=\E_0(t)$ are defined by \eqref{E1}.
	\end{Thm}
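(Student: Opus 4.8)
The plan is to prove Theorem~\ref{Theorem_bounded} by the perturbative energy method: (i) a short-time existence result for \eqref{1}, \eqref{specular}, \eqref{Neumann}; (ii) a closed uniform-in-time a priori estimate for $\E_\nu,\D_\nu$; (iii) a continuity argument producing the global solution with $\sup_t\E_\nu(t)\lesssim\ve_0$; (iv) a decay estimate upgrading this to $\E(t)\lesssim e^{-\delta t^p}\E(0)$; and (v) propagation of $F\ge0$. For local existence I would linearize \eqref{1} and run an iteration in which each step solves a linear transport(-diffusion) problem with the specular and Neumann conditions imposed; uniform bounds on the iterates come from the same estimates as in (ii), using the coercivity $-(Lg,g)_{L^2_v}\gtrsim|(\I-\P)g|^2_{L^2_D}$ and, for the non-cutoff operators, the anisotropic $L^2_D$ machinery and pseudo-differential calculus recalled above --- in particular trilinear bounds of the form $|(\Gamma(f,g),h)_{L^2_v}|\lesssim|f|_{L^2_v}|g|_{L^2_D}|h|_{L^2_D}$ and their $\pa^\alpha_\beta$-commutator versions, together with commutator estimates for $[\pa^\alpha_\beta,L]$, which replace the cutoff-Boltzmann tools that are unavailable here.

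\textbf{Microscopic and weighted estimates.} The core is the a priori bound. For $|\alpha|+|\beta|\le3$ I would apply $\pa^\alpha_\beta$ to \eqref{1}, pair with $w_{l,\nu}(\alpha,\beta)^2\pa^\alpha_\beta f$ in $L^2_{x,v}$ and sum. For $\beta=0$ the linear part gives the dissipation $\|\pa^\alpha f\|^2_{L^2_xL^2_D}$ on the microscopic component; for $\beta\ne0$ a weighted coercivity gives $\|w_{l,\nu}(\alpha,\beta)\pa^\alpha_\beta f\|^2_{L^2_xL^2_D}$ modulo lower-order terms, while the commutator $[\pa_\beta,v\cdot\na_x]$ creates $\na_x\pa^\alpha_{\beta'}f$ with $|\beta'|<|\beta|$, controlled because the powers $r,q$ in \eqref{w22}--\eqref{pq} are chosen so the weight strictly decreases with $|\beta|$ (the hypothesis $l\ge 3q$ keeping it large enough through three derivatives). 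The transport and field terms are treated as cubic: $\na_x\phi\cdot\na_vf$ is integrated by parts in $v$ with no boundary contribution and bounded by $\sqrt{\E_\nu}\,\D_\nu$, the term $\pm\tfrac12\na_x\phi\cdot v\,f_\pm$ is likewise cubic, and $\pm\na_x\phi\cdot v\,\mu^{1/2}$ is paired with the macroscopic estimate below. The collision nonlinearity is $\lesssim\sqrt{\E_\nu}\,\D_\nu$ by the trilinear/commutator estimates; for soft potentials this closes only thanks to the exponential factor $\exp(\nu\<v\>)$ and the enlarged $r,q$ --- this is the velocity-weighted energy estimate advertised in the abstract. Finally, integration by parts in $x$ leaves boundary integrals such as $\int_\gamma (n\cdot v)\,w_{l,\nu}(\alpha,\beta)^2|\pa^\alpha_\beta f|^2\,d\gamma$ and mixed variants. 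This is the main obstacle. On a union of cubes each face $\Gamma_i$ has $n=\pm e_i$, so the tangential derivatives $\pa_{\tau_1},\pa_{\tau_2}$ preserve the specular identity $f(x,R_xv)=f(x,v)$ (equivalently, $f(x,\cdot)$ is even in $v_i$), whereas a normal derivative $\pa_{x_i}$, not tangential to $\pa\Omega$, is recovered from the transport equation on the boundary in terms of tangential derivatives, $\pa_t f$, and a factor $v_i$ (away from the grazing set $\gamma_0$, which is negligible); tracking the parity in $v_i$ of $\pa^\alpha_\beta f$ so produced --- and using that $n\cdot v$ is odd in $v_i$ while $\<v\>,\mu,R_x$ are invariant --- one obtains \emph{compatible specular boundary conditions for $\pa^\alpha_\beta f$ up to order three} under which these boundary integrals either cancel exactly or reduce to controllable cubic terms. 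This is exactly the step that fails for a general $C^k$ domain.

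\textbf{Macroscopic estimates and the electric field.} Projecting \eqref{1} onto $\ker L$ yields the macroscopic equations for $(a_\pm,b,c)$. Testing them against the standard auxiliary vector fields built from $(a_\pm,b,c)$ and their $x$-derivatives produces an interaction functional $G(t)$ with $|G(t)|\lesssim\E_\nu(t)$ and
\begin{align*}
\frac{d}{dt}G(t)+c\Big(\sum_{|\alpha|\le2}\|\na_x\pa^\alpha(a_\pm,b,c)\|^2_{L^2_x}+\|a_+-a_-\|^2_{L^2_x}\Big)\lesssim\|(\I-\P)f\|^2_{L^2_xL^2_D}+\sqrt{\E_\nu}\,\D_\nu .
\end{align*}
The field is closed through the Poisson equation: $-\Delta_x\phi=\int_{\R^3}(f_+-f_-)\mu^{1/2}\,dv=a_+-a_-$ with the Neumann condition \eqref{Neumann} and zero mean, so elliptic regularity gives $\|\na_x\pa^\alpha\phi\|_{L^2_x}\lesssim\|\pa^\alpha(a_+-a_-)\|_{L^2_x}$, and testing the difference of the two Vlasov equations against $\na_x\phi$ supplies the remaining dissipation $\|E\|^2_{L^2_x}=\|\na_x\phi\|^2_{L^2_x}$ --- the Neumann condition being used precisely to annihilate the boundary term $\int_{\pa\Omega}\phi\,\pa_n\phi$, which is the ``elegant'' resolution announced in the introduction. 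Taking $\E_\nu+\kappa G$ with $\kappa$ small, so that $\E_\nu+\kappa G\approx\E_\nu$ with dissipation $\approx\D_\nu$, one arrives at
\begin{align*}
\frac{d}{dt}\big(\E_\nu(t)+\kappa G(t)\big)+\D_\nu(t)\lesssim\sqrt{\E_\nu(t)}\,\D_\nu(t).
\end{align*}
Under \eqref{small} and the continuity argument this gives the global solution with $\sup_t\E_\nu(t)\lesssim\ve_0$ and $\int_0^\infty\D_\nu\lesssim\ve_0$.

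\textbf{Decay and conclusion.} With the uniform bound in hand, $\frac{d}{dt}\E(t)+\D(t)\le0$ for the unweighted energy $\E=\E_0$ after absorbing cubic terms. Splitting the velocity integral at $|v|\le k$ and $|v|>k$ --- on the first region the weight mismatch between $\E$ and $\D$ costs only a power $k^{a}$ with $a$ depending on $\gamma,s$, on the second one trades $e^{-\nu k}$ against the bounded $\E_\nu\lesssim\ve_0$ --- gives $\E(t)\lesssim k^{a}\D(t)+e^{-\nu k}\ve_0$ for every $k\ge1$. Inserting this into the differential inequality and optimizing $k=k(t)$ (the standard Strain--Guo/Caflisch-type computation) yields $\E(t)\lesssim e^{-\delta t^p}\E(0)$ with $p$ as in \eqref{p}, in particular pure exponential decay for hard potentials. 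Non-negativity $F=\mu+\mu^{1/2}f\ge0$ is preserved along the approximation scheme of step (i). Collecting (i)--(v) gives $\sup_{0\le t\le T}e^{\delta t^p}\E(t)+\sup_{0\le t\le T}\E_\nu(t)\lesssim\ve_0$, which is the assertion. I expect the genuine difficulty to be concentrated in step (ii): the compatible high-order specular boundary conditions, and the weighted non-cutoff commutator and trilinear estimates in the $L^2_D$ norm that make the soft-potential case close; the Neumann treatment of $\phi$ is what keeps the coupling with the Poisson equation clean.
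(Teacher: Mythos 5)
Your overall architecture (local existence $\to$ weighted a priori estimate $\to$ continuity argument $\to$ velocity-split decay $\to$ positivity) matches the paper, and your treatment of the boundary terms via high-order specular compatibility on cube faces, of the macroscopic dissipation via auxiliary Poisson problems with the Neumann condition on $\phi$, and of the Strain--Guo velocity splitting for the time decay are all in the right spirit. But there is a genuine gap in the microscopic energy estimate, precisely at the step you wave past with ``the term $\pm\tfrac12\na_x\phi\cdot v f_\pm$ is likewise cubic.''

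The top-order contribution of $\pm\tfrac12\na_x\phi\cdot v\,\pa^\alpha f_\pm$, paired with $w^2\pa^\alpha f_\pm$, produces $\|\na_x\phi\|_{L^\infty_x}\,\|\<v\>^{1/2}w\,\pa^\alpha f\|^2_{L^2_{x,v}}$. For soft potentials (and even for Landau/Boltzmann hard potentials with $\gamma+2s<1$) the dissipation $\D_\nu$ only controls the weight $\<v\>^{(\gamma+2s)/2}w$, which is strictly weaker than $\<v\>^{1/2}w$; the exponential factor $e^{\nu\<v\>}$ in \eqref{w22} does not help because it appears in \emph{both} $\E_\nu$ and $\D_\nu$ and does not alter the relative polynomial loss. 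So this term is neither $\lesssim\sqrt{\E_\nu}\D_\nu$ nor $\lesssim\E_\nu$, and it cannot be absorbed no matter how small the data are. The paper removes it by an exact algebraic cancellation: one tests \eqref{71} against $e^{\pm\phi}w^2\pa^\alpha f_\pm$ rather than $w^2\pa^\alpha f_\pm$, so that integration by parts of the transport term produces $\mp\tfrac12\int\na_x\phi\cdot v\,e^{\pm\phi}w^2|\pa^\alpha f_\pm|^2$, which cancels the top-order field term; the price is the extra $I_1$-type term $\mp\tfrac12(\pa_t\phi\,w\pa^\alpha f_\pm,e^{\pm\phi}w\pa^\alpha f_\pm)$, which is of ``potential type'' $\lesssim\|\pa_t\phi\|_{L^\infty_x}\E_\nu(t)$ and is closed by time-integrability of $\na_x\phi$, not by absorption into $\D_\nu$. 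This $e^{\pm\phi}$ device (going back to Guo's VPL work) is the crux of the energy estimate in \eqref{7.4}--\eqref{8.24}, and its absence makes your Step (ii) not close for the stated range of $(\gamma,s)$ and $\gamma\ge-3$.

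Relatedly, you fold the term $\na_x\phi\cdot\na_v\pa^\alpha f_\pm$ (after velocity integration by parts) into ``$\lesssim\sqrt{\E_\nu}\D_\nu$,'' but after integrating by parts it yields $\|\na_x\phi\|_{L^\infty_x}\|w\pa^\alpha f\|^2_{L^2_{x,v}}$, i.e.\ an $L^2_v$-norm that is again not controlled by $L^2_D$ for soft potentials. The paper bounds this as $\|\na_x\phi\|_{H^2_x}\E_\nu(t)$ (see \eqref{7.5}) and, like the $\pa_t\phi$ term, closes it via the time-decaying bound $\|\na_x\phi\|_{H^2_x}+\|\pa_t\phi\|_{L^\infty_x}\lesssim\sqrt{\E(t)}\lesssim e^{-\delta t^p/2}\sqrt{X(t)}$, which requires the $e^{\delta t^p}\E$ part of the a priori quantity $X(t)$ in \eqref{8.2}. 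Your proposal treats the continuity argument and the decay as sequential, but the paper must run them simultaneously precisely because the Gr\"onwall factor for the potential terms is controlled by the decay of $\E$. You should restructure the argument around the two-norm a priori functional $X(t)$ rather than first proving $\sup_t\E_\nu\lesssim\ve_0$ and then deriving decay. The rest of your outline --- compatible specular conditions from the transport equation on the cube faces, the auxiliary elliptic problems with mixed/Neumann boundary conditions for the macroscopic dissipation, and the high/low velocity split --- is consistent with the paper once these repairs are made.
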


	As in \cite{Deng2021}, we consider the bounded domain $\Omega$ as union of cubes. In this case, normal derivatives $\pa_n$ on $\pa\Omega$ are also derivatives along axis. By using 
	\begin{equation*}
		v\cdot\nabla_xf = v\cdot n(x)\partial_{n}f + v\cdot \tau_1(x)\partial_{\tau_1}f +v\cdot \tau_2(x)\partial_{\tau_2}f, 
	\end{equation*}  
and equation \eqref{1}, 
one can obtain the compatible high-order specular boundary condition in Lemma \ref{Lem24}, \ref{Lem25} and \ref{Lem26}. On the other hand, $\pa^\alpha$ can be rewritten into normal derivative $\pa_n$ and tangent derivative $\pa_{\tau_1}$ and $\pa_{\tau_2}$ on the boundary. Hence, the boundary term generated from $(\pa^\alpha_\beta(v\cdot\na_x f),\pa^\alpha_\beta f)_{L^2_{x,v}}$ vanishes by using high-order specular-reflection boundary condition. 

In this work, we use space $H^3_{x,v}$ up to third derivatives. In order to obtain the specular boundary condition, we need to assume the Neumann boundary condition for potential $\phi$. With the Poisson equation, we can also derive the third order boundary values for $\phi$; see \eqref{4.16}:
\begin{equation*}
	\pa_{x_ix_ix_i}\phi = 0,\ \ \text{ on }\Gamma_i. 
\end{equation*}
 Correspondingly, we can obtain the boundary values for macroscopic parts $\pa^\alpha[a_\pm,b,c]$ up to third derivatives:
 \begin{multline*}
	 	\partial_{x_i}c(x) =\partial_{x_i}a_\pm(x)=\partial_{x_i}b_j(x)=	b_i(x) \\=
	 	\pa_{x_ix_ix_i}c(x) =\partial_{x_ix_ix_i}a_\pm(x)=\partial_{x_ix_ix_i}b_j(x)=\pa_{x_ix_i}b_i(x)=0.
 \end{multline*} 
	These boundary values enable us to estimate the boundary terms and take integration by parts suitably with respect to $x$. 
	
	For the dissipation rate of $(a_\pm,b,c)$ in Section \ref{Sec_Marco}, we use the solutions to Poisson equation $-\Delta_x\phi_h =h$ with mixed Dirichlet-Neumann boundary condition or pure Neumann boundary condition. One should be careful when dealing with pure Neumann boundary condition. In this case, we need to assume the function $h$ on the right hand side has {\em zero} mean:
	\begin{equation*}
		\int_{\Omega}h\,dx = 0.
	\end{equation*}
	Correspondingly, we need to assume the {\em zero} mean condition for $\phi_h$ to ensure the uniqueness for pure Neumann boundary problem. By using Poincar\'{e}'s inequality, one can obtain the elliptic estimate for Poisson equation with Neumann boundary. The case of mixed Dirichlet-Neumann boundary problem is much easier since there's {\em zero} condition on the boundary and one can apply Sobolev embedding. We will illustrate these calculations in Theorem \ref{Thm41} in details.

	With the nice property of macroscopic parts, we use $e^{\pm\phi}$ as in \cite{Guo2012} to derive the energy estimates. 
	The exponential weight in \eqref{w22} is designed to generate velocity decay after taking derivative. The $2|\beta|$ in \eqref{w22} for soft potential is designed for obtaining velocity decay when estimating $w_{l,\nu}(\alpha,\beta)\pa_{\beta_1}v\cdot\na_x \pa_{\beta-\beta_1}f$. That is, when $|\beta_1|=1$, one can generate velocity decay by using $w_{l,\nu}(\alpha,\beta)\lesssim \<v\>^{\gamma+2s}w_{l,\nu}(\alpha+e_i,\beta-e_i)$. Then we can control it by using dissipation rate. 
	
	Using the boundary values carefully, we will use the integration by parts with respect to spatial variable $x$ again and again in our analysis. Moreover, we will derive that 
	\begin{equation*}
		\sum_{i,j=1}^3\|\partial_{x_ix_j}f\|_{L^2_xL^2_v}^2 = \|\Delta_xf\|_{L^2_xL^2_v}^2.
	\end{equation*}		
	Similarly, 
	\begin{equation*}
		\sum_{i,j=1}^3\|\partial_{x_ix_j}E\|_{L^2_xL^2_v}^2 = \|\Delta_xE\|_{L^2_xL^2_v}^2.
	\end{equation*}	
	Then we only need to estimate $\Delta_xf$ in our proof, which is one of the key points.

	The paper is organized as follows. In Section \ref{Sec_pre}, we give some basic estimates on collision operators. In Section \ref{Sec_Marco}, we give the dissipation macroscopic estimates for VPL and VPB systems. In Section \ref{Sec_Main}, we prove the global existence with large-time behavior via estimates on instant energy \eqref{E1} and dissipation rate \eqref{D}. In Section \ref{Sec_loc_bounded}, we give the proof of local-in-time existence to close the {\it a priori} estimate.

	\section{Preliminary}\label{Sec_pre}
	In this section, we give some basic estimate on collision operator $L$ and $\Gamma(\cdot,\cdot)$. 
	%
	We begin with splitting $L_\pm$. For the Landau case, let $\varepsilon>0$ small and choose a smooth cutoff function $\chi(|v|)\in[0,1]$ such that 
	$
	\chi(|v|)=1\text{ if } |v|<\varepsilon;\  \chi(|v|)=0 \text{ if } |v|>2\varepsilon.
	$ Then we split $L_\pm f = -A_\pm f + K_\pm f$ as in \cite[Section 4.2]{Yang2016}, where  
	\begin{align*}
		-A_\pm f &= 2\partial_{v_i}(\sigma^{ij}\partial_{v_j}f_\pm)-2\sigma^{ij}\frac{v_i}{2}\frac{v_j}{2}f_\pm+2\partial_{v_i}\sigma^i\1_{|v|> R}f_\pm+A_1f\notag\\
		&\qquad+(K_1-\1_{|v|\le R}K_1\1_{|v|\le R})f,\\
		K_\pm f &= 2\partial_{v_i}\sigma^i\1_{|v|\le R}f_\pm + \1_{|v|\le R}K_1\1_{|v|\le R}f,
	\end{align*}
	and $R>0$ is to be chosen large, $\varepsilon>0$ is to be chosen small, and $A_1$ and $K_1$ are given respectively by 
	\begin{align*}
		A_1f &= -\sum_\pm\mu^{-1/2}\partial_{v_i}\Big\{\mu\Big[\Big(\phi^{ij}\chi\Big)*\Big(\mu\partial_{v_j}\big[\mu^{-1/2}f_\pm\big]\Big)\Big]\Big\},\\
		K_1f &=  -\sum_\pm\mu^{-1/2}\partial_{v_i}\Big\{\mu\Big[\Big(\phi^{ij}\big(1-\chi\big)\Big)*\Big(\mu\partial_{v_j}\big[\mu^{-1/2}f_\pm\big]\Big)\Big]\Big\},
	\end{align*}
	with the convolution taken with respect to the velocity variable $v$. 
	Then \cite[eq. (4.33), (4.32)]{Yang2016} shows that 
	\begin{equation*}
		\sum_\pm(A_\pm f,f_\pm)_{L^2_{v}}
		\ge c_0|f|^2_{L^2_{D}},
	\end{equation*}
for some $c_0>0$, 
	and 
	\begin{equation}\label{20aa}
		|(K_1g,h)_{L^2_v}|\lesssim |\mu^{1/10}g|_{L^2_v}|\mu^{1/10}h|_{L^2_v}. 
	\end{equation}
	From \cite[Lemma 3]{Guo2002a}, we know that  
	\begin{align}\label{65aa}
		|\partial_\beta\sigma^{ij}(v)|+|\partial_\beta\sigma^i(v)|\le C_\beta(1+|v|)^{\gamma+2-|\beta|}.
	\end{align}
	Thus, \eqref{20aa} and \eqref{65aa} implies that $K$ is a bounded operator on $L^2_v$ with estimate 
	\begin{align}\label{K1}
		|Kf|_{L^2_v}\lesssim |\mu^{1/10}f|_{L^2_v}. 
	\end{align}
	
	For Boltzmann case, we split $L_\pm f = -A_\pm f + K f$ with 
	\begin{align*}
		-A_\pm f &= 2\mu^{-1/2}Q(\mu,\mu^{1/2}f_\pm),\\
		K f &= \mu^{-1/2}Q(\mu^{1/2}(f_++f_-),\mu).
	\end{align*}
	Then by \cite[Lemma 2.15]{Alexandre2012}, we have 
	\begin{align}\label{K2}
		|Kf|_{L^2_v}\lesssim |\mu^{1/10^3}f|_{L^2_v}. 
	\end{align}

	\begin{Lem}\label{Lem21}Let $w=w_{l,\nu}(\alpha,\beta)$ be given by \eqref{w22}. Let $\gamma>\max\{-3,-2s-\frac{3}{2}\}$ for Boltzmann case and $\gamma\ge-3$ for Landau case. 
		Then 
		\begin{align}\label{36b}
			\sum_{\pm}(L_\pm f,f_\pm)_{L^2_v} \gtrsim |\{\I-\P\}f|_{L^2_D}^2, 
		\end{align}
		\begin{equation}\label{36c}\begin{aligned}
				\sum_\pm(w^2L_\pm g,g_\pm)_{L^2_v}&\ge c_0|g|_{L^2_{D,w}}^2-C|g|_{L^2(B_C)}^2,
			\end{aligned}
		\end{equation}
		and for $|\beta|\ge 1$, 
		\begin{multline}\label{36cw}
				\sum_\pm(w^2_{l,\nu}(\alpha,\beta)\pa^\alpha_{\beta}L_\pm g,\pa^\alpha_\beta g_\pm)_{L^2_v}\ge c_0|\pa^\alpha_\beta g|_{L^2_{D,w_{l,\nu}(\alpha,\beta)}}^2
				-C\sum_{|\beta_1|<|\beta|}|\pa^\alpha_{\beta_1} g|_{L^2_{D,w_{l,\nu}(\alpha,\beta_1)}}^2
				 \\-C|g|_{L^2(B_C)}^2.
		\end{multline}for some generic constant $c_0,C>0$. 
		There exists decomposition $L_\pm = -A_\pm + K_\pm$ such that $K_\pm$ is a bounded linear operator on $L^2_v$ and $A_\pm$ satisfies
		\begin{equation}
			\label{36a}
			\sum_\pm(w^2A_\pm g,g_\pm)\ge c_0|g|_{L^2_{D,w}}^2-C|g|_{L^2(B_C)}^2,
		\end{equation}
		and for $|\beta|\ge 1$, 
		\begin{equation}\label{36aw}\begin{aligned}
				\sum_\pm(w^2_{l,\nu}(\alpha,\beta)\pa^\alpha_{\beta}A_\pm g,\pa^\alpha_{\beta} g_\pm)&\ge c_0|\pa^\alpha_{\beta} g|_{L^2_{D,w_{l,\nu}(\alpha,\beta)}}^2
				-C\sum_{|\beta_1|<|\beta|}|\pa^\alpha_{\beta_1} g|_{L^2_{D,w_{l,\nu}(\alpha,\beta_1)}}^2
				\\&\qquad\qquad\qquad\qquad\qquad\qquad\qquad -C|\pa^\alpha g|_{L^2(B_C)}^2.
			\end{aligned}
		\end{equation}
		Moreover, for any $|\alpha|+|\beta|\le 3$, we have 
		\begin{multline}\label{35a}
			\big(w^2\pa^\alpha_\beta\Gamma_\pm(g_1,g_2),\pa^\alpha_\beta g_3\big)_{L^2_v}\\\lesssim \sum\big(|w\pa^{\alpha_1}_{\beta_1}g_1|_{L^2_v}|\pa^{\alpha_2}_{\beta_2}g_2|_{L^2_{D,w}}+|\pa^{\alpha_1}_{\beta_1}g_1|_{L^2_{D,w}}|w\pa^{\alpha_2}_{\beta_2}g_2|_{L^2_{v}}\big)|\pa^\alpha_\beta g_3|_{L^2_{D,w}},
		\end{multline}
		where the summation is taken over $\alpha_1+\alpha_2=\alpha$, $\beta_1+\beta_2\le \beta$. 
		Consequently, taking integration over $x$, 
		\begin{multline}\label{37a}
			\big(w^2\pa^\alpha_\beta\Gamma_\pm(g_1,g_2),\pa^\alpha_\beta g_3\big)_{L^2_{x,v}}\lesssim \Big(\sum_{|\alpha_1|+|\beta_1|\le 3} \|w\pa^{\alpha_1}_{\beta_1}g_1\|_{L^2_xL^2_v}\sum_{|\alpha_1|+|\beta_1|\le 3}\|\pa^{\alpha_1}_{\beta_1}g_2\|_{L^2_xL^2_{D,w}}\\+\sum_{|\alpha_1|+|\beta_1|\le 3}\|\pa^{\alpha_1}_{\beta_1}g_1\|_{L^2_xL^2_{D,w}}\sum_{|\alpha_1|+|\beta_1|\le 3}\|w\pa^{\alpha_1}_{\beta_1}g_2\|_{L^2_xL^2_v}\Big)\|\pa^\alpha_\beta g_3\|_{L^2_xL^2_{D,w}}.
		\end{multline}
	\end{Lem}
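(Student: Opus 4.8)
The plan is to establish Lemma~\ref{Lem21} by assembling standard coercivity and upper-bound estimates for the linearized and nonlinear collision operators, then propagating them through velocity derivatives and weights. For the basic coercivity \eqref{36b}, this is the classical spectral gap of $L$ on the orthogonal complement of $\ker L$; for the Landau case it is \cite[Lemma 5]{Guo2002a} (and its Coulomb refinements), and for the non-cutoff Boltzmann case it follows from \cite{Gressman2011} combined with the decomposition $L_\pm = -A_\pm + K$ introduced just above the statement, together with the fact that $K$ is relatively compact (from \eqref{K1}, \eqref{K2}). First I would treat the weighted zeroth-order estimate \eqref{36c}: write $\sum_\pm(w^2 L_\pm g, g_\pm) = -\sum_\pm(w^2 A_\pm g, g_\pm) + \sum_\pm(w^2 K g, g_\pm)$; the $K$-term is bounded by $|g|_{L^2(B_C)}^2$ up to a small multiple of $|g|_{L^2_{D,w}}^2$ because $K$ has a fast Gaussian-decaying kernel (see \eqref{K1}, \eqref{K2}); the $A$-term is exactly \eqref{36a}, which is the weighted coercivity of the ``main part'' and is proved by a commutator/integration-by-parts argument, moving $w^2$ past $A_\pm$. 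In the Landau case this uses \eqref{65aa} for the $\sigma^{ij}$, $\sigma^i$ bounds and the identity expressing $|f|^2_{L^2_{D,w}}$ in terms of $P_v$-projected derivatives; in the Boltzmann case one uses the cancellation lemma and the commutator estimates for $\mu^{-1/2}Q(\mu,\cdot)$ in weighted spaces, e.g.\ from \cite{Alexandre2012} or \cite{Gressman2011}.

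Next I would handle the derivative versions \eqref{36cw} and \eqref{36aw}. Here one differentiates $\pa^\alpha_\beta(A_\pm g)$ by the Leibniz rule; the top-order term $A_\pm(\pa^\alpha_\beta g)$ (more precisely the term where all derivatives land on $g$) contributes the leading coercive quantity $c_0|\pa^\alpha_\beta g|^2_{L^2_{D,w_{l,\nu}(\alpha,\beta)}}$ after applying \eqref{36a}/\eqref{36c} with $w = w_{l,\nu}(\alpha,\beta)$ to $\pa^\alpha_\beta g$, using that the coefficients of $A_\pm$ (the $\sigma^{ij}$, $\sigma^i$, or the Boltzmann kernel) and the weight ratios are controlled. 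The remaining terms involve fewer $v$-derivatives on $g$, hence are absorbed into $-C\sum_{|\beta_1|<|\beta|}|\pa^\alpha_{\beta_1}g|^2_{L^2_{D,w_{l,\nu}(\alpha,\beta_1)}}$; the key point is that each $v$-derivative falling on a coefficient gains one power of $\langle v\rangle^{-1}$ (or $\langle v\rangle^{\gamma+2-1}$ etc.), which is compatible with the drop in the weight exponent $q|\beta|$ in \eqref{w22}, so the lower-order dissipation norms genuinely dominate. One then adds back $\sum_\pm(w^2 K g, g_\pm)$-type terms, estimated by $|g|_{L^2(B_C)}^2$ as before, to pass from \eqref{36aw} to \eqref{36cw}.

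For the trilinear estimate \eqref{35a}, I would use the (by now standard) weighted upper bound for the non-cutoff/Landau nonlinear operator $\Gamma$: after the Leibniz expansion $\pa^\alpha_\beta\Gamma_\pm(g_1,g_2) = \sum \Gamma_\pm(\pa^{\alpha_1}_{\beta_1}g_1,\pa^{\alpha_2}_{\beta_2}g_2)$ (the operator $\Gamma$ being bilinear with constant-in-$(x,t)$ structure, only velocity derivatives distribute nontrivially, and those too distribute by a generalized Leibniz rule since $\Gamma$ involves $\mu^{\pm 1/2}$ factors — the $\mu$-derivatives only improve decay), one invokes the trilinear estimate of \cite{Gressman2011} (Boltzmann) or \cite{Strain2007}, \cite{Guo2002a} (Landau) in the form $|(w^2\Gamma_\pm(F,G), H)_{L^2_v}| \lesssim (|wF|_{L^2_v}|G|_{L^2_{D,w}} + |F|_{L^2_{D,w}}|wG|_{L^2_v})|H|_{L^2_{D,w}}$, where the weight $w$ is split between the three slots using $w(v)\lesssim w(v'_*)w(v'_*)^{-1}\cdots$ type inequalities (the weight being $\Gamma$-admissible / of polynomial-times-exponential type). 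Finally, to obtain \eqref{37a} one integrates \eqref{35a} over $x\in\Omega$ and applies the one-dimensional-in-$x$ Sobolev embedding $H^2_x \hookrightarrow L^\infty_x$ in $\Rd$ (valid since $H^3_{x,v}$ is the working space and $\Omega$ is a bounded Lipschitz domain, a union of cubes): the factor carrying the most $x$-derivatives is kept in $L^2_x$ and the other in $L^\infty_x$, then bounded by $\sum_{|\alpha_1|+|\beta_1|\le 3}\|\cdot\|$, with Cauchy--Schwarz in $x$ producing the product structure on the right of \eqref{37a}.

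The main obstacle is the weighted commutator estimate \eqref{36aw} for $|\beta|\ge 1$ in the non-cutoff Boltzmann case: unlike the Landau case, where $A_\pm$ is a genuine second-order differential operator with explicit coefficients $\sigma^{ij}$ satisfying \eqref{65aa}, the Boltzmann $A_\pm = 2\mu^{-1/2}Q(\mu,\mu^{1/2}\cdot)$ is a nonlocal anisotropic integro-differential operator of order $2s$, so ``moving $w^2$ past $A_\pm$'' requires careful use of pseudo-differential calculus (the $S(M,\Gamma)$ machinery set up in the notations) to show that the commutator $[w, A_\pm]$ is lower order and that differentiating in $v$ and multiplying by the weight produces only terms controlled by strictly lower-order dissipation norms plus a compact remainder $|g|_{L^2(B_C)}^2$. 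Keeping track of the exact weight exponents — ensuring $r|\alpha|+q|\beta|$ with $r,q$ as in \eqref{pq} is large enough that each lost derivative or gained $\langle v\rangle$-power is compatible — is the delicate bookkeeping that makes the soft-potential case work, and is where I would spend most of the effort.
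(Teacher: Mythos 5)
Your proposal is correct and follows essentially the same route as the paper: the coercivity and trilinear bounds \eqref{36b}--\eqref{35a} are established in the paper by direct citation to the Landau (Strain--Guo \cite{Strain2007}, Guo \cite{Guo2002a}) and non-cutoff Boltzmann (Gressman--Strain \cite{Gressman2011}, Duan et al.\ \cite{Duan2013a}, Fan et al.\ \cite{Fan2017}) literature, with \eqref{36aw} obtained from \eqref{36c} plus boundedness of $K$, and the only computation actually written out is the H\"older-in-$x$/Sobolev argument for \eqref{37a}, which you reproduce. Two cosmetic points: the paper splits the H\"older pairing for \eqref{37a} into $L^\infty$--$L^2$, $L^6$--$L^3$, and $L^2$--$L^\infty$ according to $|\alpha_1|+|\beta_1|=0$, $=1$, and $\ge 2$ (your ``keep the more differentiated factor in $L^2_x$, the other in $L^\infty_x$'' with $H^2(\Omega)\hookrightarrow L^\infty(\Omega)$ also works here), and your Leibniz expansion of $\pa^\alpha_\beta\Gamma_\pm(g_1,g_2)$ as $\sum\Gamma_\pm(\pa^{\alpha_1}_{\beta_1}g_1,\pa^{\alpha_2}_{\beta_2}g_2)$ is not a literal identity---the $v$-derivatives landing on the $\mu^{\pm 1/2}$ factors are precisely why the sum in \eqref{35a} runs over $\beta_1+\beta_2\le\beta$ rather than $=\beta$, though as you observe those terms only improve velocity decay and are controlled by the same trilinear bound.
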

	\begin{proof}
		The proof of \eqref{36b}, \eqref{36c} and \eqref{36cw} can be found in \cite[Lemma 5, Lemma 9]{Strain2007} for Landau case and \cite[eq. (2.13)]{Gressman2011} as well as
		\cite[Lemma 2.7]{Duan2013a} for Boltzmann case. 
		The proof of \eqref{36a} can be found in \cite[Lemma 7 and Lemma 8]{Strain2007} for Landau case and \cite[Lemma 2.6]{Duan2013a} for Boltzmann case. Using \eqref{36c} and boundedness of $K$ from \eqref{K1}, \eqref{K2}, we can obtain \eqref{36aw}. 
		The proof of \eqref{35a} is given by \cite[Lemma 10]{Strain2007} for Landau case and \cite[Lemma 2.4 and Lemma 2.4]{Duan2013a, Fan2017} for Boltzmann case. Finally, we will give the proof of \eqref{37a}. 
		For $|\alpha|+|\beta|\le 3$, from \eqref{35a}, we know that 
		\begin{align*}
			\big(w^2\pa^\alpha_\beta\Gamma_\pm(g_1,g_2),\pa^\alpha_\beta g_3\big)_{L^2_{x,v}}\lesssim \sum_{\alpha_1+\alpha_2=\alpha,\ \beta_1+\beta_2\le \beta}\Big\{ \int_{\Omega}|w\pa^{\alpha_1}_{\beta_1}g_1|^2_{L^2_v}|\pa^{\alpha_2}_{\beta_2}g_2|^2_{L^2_{D,w}}\,dx\\+\int_{\Omega}|\pa^{\alpha_1}_{\beta_1}g_1|^2_{L^2_{D,w}}|w\pa^{\alpha_2}_{\beta_2}g_2|^2_{L^2_v}\,dx\Big\}^{\frac{1}{2}}\|\pa^\alpha_\beta g_3\|_{L^2_xL^2_{D,w}}.
		\end{align*}
		We apply $L^\infty-L^2$ and $L^3-L^6$ H\"{o}lder's inequality to the first term inside the brace: 
		\begin{align*}
			\Big(\int_{\Omega}|w\pa^{\alpha_1}_{\beta_1}g_1|^2_{L^2_v}|\pa^{\alpha_2}_{\beta_2}g_2|^2_{L^2_{D,w}}\,dx\Big)^{\frac{1}{2}}
			&\le \sum_{|\alpha_1|+|\beta_1|=0}\|w\pa^{\alpha_1}_{\beta_1}g_1\|_{L^\infty_xL^2_v}\|\pa^{\alpha_2}_{\beta_2}g_2\|_{L^2_xL^2_{D,w}}\\
			&\qquad+\sum_{|\alpha_1|+|\beta_1|=1}\|w\pa^{\alpha_1}_{\beta_1}g_1\|_{L^6_xL^2_v}\|\pa^{\alpha_2}_{\beta_2}g_2\|_{L^3_xL^2_{D,w}}\\
			&\qquad+\sum_{2\le|\alpha_1|+|\beta_1|\le3}\|w\pa^{\alpha_1}_{\beta_1}g_1\|_{L^2_xL^2_v}\|\pa^{\alpha_2}_{\beta_2}g_2\|_{L^\infty_xL^2_{D,w}}\\
			&\lesssim \sum_{|\alpha_1|+|\beta_1|\le 3}\|w\pa^{\alpha_1}_{\beta_1}g_1\|_{L^2_xL^2_v}
			\sum_{|\alpha_1|+|\beta_1|\le 3}\|\pa^{\alpha_1}_{\beta_1}g_2\|_{L^2_xL^2_{D,w}}. 
		\end{align*}where we used embedding $\|f\|_{L^3_x(\Omega)}\lesssim\|f\|_{H^1_x(\Omega)}$, $\|f\|_{L^6_x(\Omega)}\lesssim\|f\|_{H^1_x(\Omega)}$ and $\|f\|_{L^\infty_x(\Omega)}\lesssim\|f\|_{H^2_x(\Omega)}$ from \cite[Section V and (V.21)]{Adams2003}. 
		Similarly, 
		\begin{align*}
			\Big(\int_{\Omega}|\pa^{\alpha_1}_{\beta_1}g_1|^2_{L^2_{D,w}}|w\pa^{\alpha_2}_{\beta_2}g_2|^2_{L^2_v}\,dx\Big)^{\frac{1}{2}}
			\lesssim \sum_{|\alpha_1|+|\beta_1|\le 3}\|\pa^{\alpha_1}_{\beta_1}g_1\|_{L^2_xL^2_{D,w}}
			\sum_{|\alpha_1|+|\beta_1|\le 3}\|w\pa^{\alpha_1}_{\beta_1}g_2\|_{L^2_xL^2_v}.
		\end{align*}
		Combining the above estimate, we obtain \eqref{37a}. 
		This completes the proof of Lemma \ref{Lem21}.

		\end{proof}

	\section{Macroscopic Estimates}\label{Sec_Marco}

	In this section, we consider the macroscopic estimates for union of cubes.  
	Let $\Omega$ be given by \eqref{Omega} and consider the following problem  
	\begin{align}\label{4.1}
		\partial_t{f_\pm}+v\cdot\na_x{f_\pm} \pm {\nabla_x\phi}\cdot v\mu^{1/2} - L_\pm {f} = {g_\pm},
	\end{align}
	with initial data $(f_0,E_0)$ and boundary condition 
	\begin{align}\label{4.2}
		f(t,x,R_xv) = f(t,x,v),\ \text{ on }x\in\partial\Omega,
	\end{align}
	where $g_\pm$ is chosen to be zero or given by 
	\begin{align}\label{g}
		g_\pm = \pm\nabla_x\phi\cdot\nabla_vf_\pm\mp\frac{1}{2}\nabla_x\phi\cdot vf_\pm+\Gamma_\pm(f,f),
	\end{align}
	and the potential is determined by Poisson equation:
	\begin{align}
		-\Delta_x\phi = a_+-a_-,\label{4.3}
	\end{align}
	with the $zero$ Neumann boundary condition 
	\begin{align}\label{4.4}
		\partial_n\phi = 0, \ \text{ on }\partial\Omega. 
	\end{align}
	We denote $\zeta(v)$ to be a smooth function satisfying $
	\zeta(v)\lesssim e^{-\lambda|v|^2},$ for some $\lambda >0$, which may change from line to line.

		In order to discover the macroscopic dissipation, we take the following velocity moments
	\begin{equation*}
		\mu^{\frac{1}{2}}, v_j\mu^{\frac{1}{2}}, \frac{1}{6}(|v|^2-3)\mu^{\frac{1}{2}},
		(v_j{v_m}-1)\mu^{\frac{1}{2}}, \frac{1}{10}(|v|^2-5)v_j \mu^{\frac{1}{2}}
	\end{equation*}
	with {$1\leq j,m\leq 3$} for the equation \eqref{4.1}. By taking the average and difference on $\pm$ of the resultant equations, one sees that  
	the coefficient functions $[a_\pm,b,c]=[a_\pm,b,c](t,x)$ satisfy
	the fluid-type system; see \cite[Section 3]{Deng2021b}: 
	\begin{equation}\label{19}\left\{
		\begin{aligned}
			&\partial_t\Big(\frac{a_++a_-}{2}\Big)+\nabla_x\cdot b = 0,\\
			&\partial_tb_j+\partial_{x_j}\Big(\frac{a_++a_-}{2}+2c\Big)+\frac{1}{2}\sum_{m=1}^3\partial_{x_m}\Theta_{jm}(\{\I-\P\}f\cdot[1,1])
			= \frac{1}{2}\sum_\pm(g_\pm,v_j\mu^{1/2})_{L^2_v},\\
			&\partial_tc+\frac{1}{3}\nabla_x\cdot b + \frac{5}{6}\sum^3_{j=1}\partial_{x_j}\Lambda_j(\{\I-\P\}f\cdot[1,1]) = \frac{1}{12}\sum_\pm(g_\pm,(|v|^2-3)\mu^{1/2})_{L^2_v},\\
			&\partial_t\Big(\frac{1}{2}\Theta_{jm}((\I-\P)f\cdot[1,1])+2c\delta_{jm}\Big) + \partial_{x_j}b_m+\partial_{x_m}b_j = \frac{1}{2}\sum_\pm\Theta_{jm}(g_\pm+h_\pm),\\
			&\frac{1}{2}\partial_t\Lambda_j((\I-\P)f\cdot[1,1])+\partial_{x_j}c = \frac{1}{2}\Lambda_j(g_++g_-+h_++h_-),
		\end{aligned}\right.
	\end{equation}for $1\le j,m\le3$, where
	\begin{align*}
		h_\pm &= -v\cdot\nabla_x(\II-\PP)f+L_\pm f,\\
		\Theta_{jm}(f_\pm) = (f_\pm,(v_jv_m&-1)\mu^{1/2})_{L^2_v},\ \ \Lambda_j(f_\pm) =\frac{1}{10}(f_\pm,(|v|^2-5)v_j\mu^{1/2})_{L^2_v},
	\end{align*}
	and 
	\begin{equation}\label{98a}\left\{
		\begin{aligned}
			&\partial_t({a_+}-{a_-})+\nabla_{x}\cdot {G}=0,\\
			&\partial_t{G}+\nabla_x({a_+}-{a_-})-2{E}+\nabla_x\cdot\Theta(\{\I-\P\}{f}\cdot[1,-1])=(({g}+L{f})\cdot[1,-1],v\mu^{1/2})_{L^2_v},
		\end{aligned}\right.
	\end{equation}
	where
	\begin{align}\label{93a}
		G = (\{\I-\P\}f\cdot[1,-1],v\mu^{1/2})_{L^2_v}.
	\end{align}

	Here we first write the Lemma for high-order specular reflection boundary conditions. These conditions can be regarded as compatible condition. 
	\begin{Lem}\label{Lem24}
		Let $(f,E)$ be the solution to \eqref{4.1}, \eqref{4.2}, \eqref{4.3} and \eqref{4.4}. Fix $i\in\{1,2,3\}$. Then we have the following identities on boundary $\big\{(x,v) : v\cdot n(x)\neq 0$ and $x$ belongs to the interior of $\Gamma_i\big\}$:
		\begin{equation}\label{33}
			f(x,v)=f(x,R_xv),
		\end{equation}
		and
		\begin{equation}\label{34}\begin{aligned}
				\partial_{\tau_j}f(x,R_xv) &= \partial_{\tau_j}f(x,v),\\
				\partial_{\tau_{j}\tau_k}f(x,R_xv) &= \partial_{\tau_j\tau_k}f(x,v),\\
				\partial_{\tau_{j}\tau_k\tau_m}f(x,R_xv) &= \partial_{\tau_j\tau_k\tau_m}f(x,v),
			\end{aligned}
		\end{equation}for $j,k,m=1,2$, 
		where $(n,\tau_1,\tau_2)$ forms an unit normal basis in $\R^3$.
		For the normal derivatives, we have that on $\big\{(x,v) : v\cdot n(x)\neq 0$ and $x$ belongs to the interior of $\Gamma_i\big\}$,  
		\begin{equation}\label{35}
			\begin{aligned}
				\partial_{n}f(x,R_xv) &= -\partial_{n}f(x,v),\\
				\partial_{\tau_j}\partial_{n}f(x,R_xv) &= -\partial_{\tau_j}\partial_{n}f(x,v),\\
				\partial_{\tau_j\tau_k}\partial_{n}f(x,R_xv) &= -\partial_{\tau_j\tau_k}\partial_{n}f(x,v),
			\end{aligned}
		\end{equation}
		for $j,k=1,2$, 
		and
		\begin{equation}\label{37}\begin{aligned}
				\partial^2_{n}f(x,R_xv) &= \partial^2_{n}f(x,v),\\
				\pa_{\tau_j}\partial^2_{n}f(x,R_xv) &= \pa_{\tau_j}\partial^2_{n}f(x,v),
			\end{aligned}
		\end{equation}for $j=1,2$. 
	\end{Lem}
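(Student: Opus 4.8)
The plan is to derive all the stated boundary identities by successively differentiating the specular-reflection relation \eqref{specular} in directions tangent to $\Gamma_i$, and by using the equation \eqref{4.1} together with the Neumann condition \eqref{4.4} to convert normal derivatives into combinations of tangential and velocity derivatives. Fix $i$ and work on the interior of $\Gamma_i$, where $n(x)$ is the constant vector $\pm e_i$ and the operator $R_x$ is the fixed linear reflection $v\mapsto v-2e_i(e_i\cdot v)$, independent of $x$. The starting point \eqref{33} is just \eqref{4.2}. The key structural fact, which I would record first, is that since $n$, $\tau_1$, $\tau_2$ are constant coordinate directions on $\Gamma_i$, the operators $\partial_{\tau_j}$ commute with the change of variable $v\mapsto R_xv$, whereas $\partial_n$ anticommutes with it in the sense that $\partial_n[f(x,R_xv)] = (\partial_nf)(x,R_xv)$ with no extra sign from $R_x$ — all the signs will instead come from the equation. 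Concretely, differentiating $f(x,v)=f(x,R_xv)$ in a tangential direction $\tau_j$ immediately gives the first line of \eqref{34}; iterating gives the second and third lines. This disposes of \eqref{34} with no input from the PDE.

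Next I would treat the normal derivatives in \eqref{35}. Here the identity $\partial_nf(x,R_xv)=-\partial_nf(x,v)$ is \emph{not} a consequence of differentiating \eqref{specular} alone; it must be extracted from equation \eqref{4.1}. The mechanism, following the remark in the paper after Theorem \ref{Theorem_bounded}, is to write
\begin{align*}
	v\cdot\nabla_xf = (v\cdot n)\partial_nf + (v\cdot\tau_1)\partial_{\tau_1}f + (v\cdot\tau_2)\partial_{\tau_2}f,
\end{align*}
so that from \eqref{4.1},
\begin{align*}
	(v\cdot n)\partial_nf_\pm = -\partial_tf_\pm - (v\cdot\tau_1)\partial_{\tau_1}f_\pm - (v\cdot\tau_2)\partial_{\tau_2}f_\pm \mp (\nabla_x\phi\cdot v)\mu^{1/2} + L_\pm f + g_\pm
\end{align*}
on $\partial\Omega$ (at points where $v\cdot n\neq 0$). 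Evaluating this relation once at $(x,v)$ and once at $(x,R_xv)$ and comparing: on the left, $v\cdot n$ changes sign under $R_x$ while the $\partial_nf$ factor is what we want to control; on the right, $\partial_t$, $\partial_{\tau_j}$ and $L_\pm$ all commute with $v\mapsto R_xv$ (using \eqref{33}, \eqref{34} and the rotational/reflection invariance of the collision operator $L$, since $R_x$ is an orthogonal map), the terms $v\cdot\tau_j$ are \emph{invariant} under $R_x$ because $\tau_j\perp n$, the Maxwellian $\mu$ is invariant, and the Vlasov term $\nabla_x\phi\cdot v$ picks up a sign from $v$ that is matched against the $\mp$; crucially $\partial_n\phi=0$ on $\partial\Omega$ by \eqref{4.4} must be used to control the normal part of $\nabla_x\phi$ when one differentiates further, but at this order one just needs that $g_\pm$ and the linear terms transform correctly. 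One concludes $(v\cdot n)\partial_nf(x,R_xv) = -(v\cdot n)\partial_nf(x,v)$, hence $\partial_nf(x,R_xv)=-\partial_nf(x,v)$ after dividing by $v\cdot n\neq0$. The second and third lines of \eqref{35} then follow by applying $\partial_{\tau_j}$ and $\partial_{\tau_j\tau_k}$ to this identity, again using that tangential derivatives commute with $R_x$.

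Finally, for \eqref{37} I would differentiate the equation a second time in the normal direction. Apply $\partial_n$ to the rewritten equation above: $\partial_n[(v\cdot n)\partial_nf_\pm] = (v\cdot n)\partial_n^2 f_\pm$ (since $v\cdot n$ does not depend on $x$ on $\Gamma_i$), so
\begin{align*}
	(v\cdot n)\partial_n^2 f_\pm = -\partial_t\partial_nf_\pm - (v\cdot\tau_1)\partial_{\tau_1}\partial_nf_\pm - (v\cdot\tau_2)\partial_{\tau_2}\partial_nf_\pm \mp \partial_n(\nabla_x\phi\cdot v)\mu^{1/2} + \partial_n(L_\pm f) + \partial_ng_\pm
\end{align*}
on the boundary. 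Now evaluate at $(x,v)$ and $(x,R_xv)$ and use the \emph{already established} first-order identities: $\partial_nf(x,R_xv)=-\partial_nf(x,v)$, hence each term $\partial_t\partial_nf$, $(v\cdot\tau_j)\partial_{\tau_j}\partial_nf$, $\partial_n(L_\pm f)$ (collision operator commutes with $R_x$, and $\partial_n$ commutes with $v\mapsto R_xv$) all change sign under $R_x$; $\partial_n(\nabla_x\phi\cdot v) = (\partial_n\nabla_x\phi)\cdot v$, and the point is to show this quantity is \emph{even} under $v\mapsto R_xv$ or is killed — here I would use \eqref{4.4} differentiated tangentially, namely $\partial_{\tau_j}\partial_n\phi=0$ on $\partial\Omega$, together with the Poisson equation to relate the remaining second derivatives of $\phi$, ultimately reducing to $\partial_n(\nabla_x\phi\cdot v)$ having the parity that makes the right-hand side change sign; and for $g_\pm$ one checks term-by-term using \eqref{g} (the Vlasov terms $\nabla_x\phi\cdot\nabla_vf$, $\nabla_x\phi\cdot vf$ and $\Gamma_\pm(f,f)$ all transform with the right parity under $R_x$, using the lower-order identities and the change of variables in the collision integral). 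Comparing the two evaluations, since the left side carries the factor $v\cdot n$ that changes sign, one gets $(v\cdot n)\partial_n^2f(x,R_xv) = (v\cdot n)\partial_n^2f(x,v)$, i.e. $\partial_n^2f(x,R_xv)=\partial_n^2f(x,v)$; applying $\partial_{\tau_j}$ gives the second line of \eqref{37}.

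\textbf{Main obstacle.} The routine part is the bookkeeping of parities under $v\mapsto R_xv$; the genuinely delicate step is the treatment of the potential term $\nabla_x\phi\cdot v$ and its normal derivatives, because $\phi$ depends only on $x$ yet appears paired with $v$, so its transformation under $R_x$ is dictated by how each spatial derivative of $\phi$ pairs against $v$ — and one must invoke not just $\partial_n\phi=0$ but the induced higher-order boundary relations for $\phi$ on $\Gamma_i$ (those quoted as \eqref{4.16} in the introduction, $\partial_{x_ix_ix_i}\phi=0$ on $\Gamma_i$, obtained from the Poisson equation together with Neumann data) to cancel the normal-normal second derivative of $\phi$ that would otherwise spoil the parity. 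I expect that making this cancellation precise — i.e.\ verifying that at each order the ``bad'' $\phi$-contribution either vanishes on $\partial\Omega$ or has the parity consistent with the claimed sign — is where the real work lies, and it is exactly the place where the Neumann condition \eqref{4.4} is essential rather than cosmetic.
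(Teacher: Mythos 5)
Your proposal follows the same route as the paper: get \eqref{33} and \eqref{34} by tangential differentiation of the specular condition, then use the equation rewritten as $(v\cdot n)\partial_n f_\pm = \text{(tangential and velocity terms)}$, evaluated at $v$ and $R_xv$, to deduce \eqref{35}; then apply $\partial_n$ once more and repeat to get \eqref{37}. Two small points worth tightening: first, the invariance $L_\pm f(R_xv)=L_\pm f(v)$ and the analogous parity of $\Gamma_\pm$ are not purely a consequence of ``$R_x$ orthogonal'' — the paper actually verifies them by a change of variables in a Carleman representation (Boltzmann) and by a term-by-term computation with the explicit $\phi^{jk}$ structure (Landau), tracking the sign flips $\phi^{ik}(R_xv)=-\phi^{ik}(v)$ for $k\neq i$; this is a genuine computation, not a general symmetry principle, and is what makes the claim \eqref{2.10} work. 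Second, in the step for \eqref{37} you write that $\partial_n(\nabla_x\phi\cdot v)$ should be ``even under $v\mapsto R_xv$,'' but it must be \emph{odd} (the whole right-hand side must change sign since $v\cdot n$ on the left does); and what you actually need here is only the tangential differentiation of the Neumann condition, $\partial_n\partial_{x_j}\phi=0$ on $\Gamma_i$ for $j\neq i$ — the Poisson-equation relation $\partial_{x_ix_ix_i}\phi=0$ (\eqref{4.16}) is reserved for the third normal derivative in Lemma \ref{Lem26}, not for Lemma \ref{Lem24}.
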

	\begin{proof}
		Note that $R_xv$ maps $\gamma_-$ onto $\gamma_+$. Then it's direct to obtain \eqref{33} from \eqref{4.2}. On $\Gamma_i$, $\partial_{\tau_j(x)}$ $(j=1,2)$ is the derivative with direction lies in $\Gamma_i$, where $\tau_1(x)$, $\tau_2(x)$ are tangent vector such that $(n,\tau_1,\tau_2)$ forms a unit normal basis in $\R^3$. Then we can obtain \eqref{34} by taking tangent derivatives on \eqref{33}. For normal derivatives, we will apply the equation \eqref{4.1}.
		We claim that 	
		\begin{equation}\label{2.10}
			Lf(x,v)=Lf(x,R_xv)\text{ and }g(x,v)=g(x,R_xv),\ \text{ on } n(x)\cdot v\neq 0,
		\end{equation}for any $x$ belongs to the interior of $\Gamma_i$. 
		Indeed, it suffices to show that $$\nabla_x\phi\cdot\nabla_vf_\pm(R_xv) = \nabla_x\phi\cdot\nabla_vf_\pm(v),\quad \nabla_x\phi\cdot R_xvf_\pm(R_xv)=\nabla_x\phi\cdot vf_\pm(v)$$ and 
		\begin{equation}\label{39}
			\mu^{-1/2}Q(\mu^{1/2}f,\mu^{1/2}g)(R_xv)=\mu^{-1/2}Q(\mu^{1/2}f(R_xv),\mu^{1/2}g(R_xv)),
		\end{equation} on $n(x)\cdot v\neq 0$. By \eqref{4.4}, we have $\partial_{x_i}\phi=0$ on $\Gamma_i$. 
		Notice that $R_xv$ sends $v_i$ to $-v_i$ and preserve the other components on $\Gamma_i$. Then for $j=1,2,3$ such that $j\neq i$, we have $\partial_{v_j}f_\pm(R_xv)=\partial_{v_j}f_\pm(v)$ on $\Gamma_i$. Thus, on $\Gamma_i$, we have 
		\begin{align*}
			\nabla_x\phi\cdot\nabla_vf_\pm(R_xv) = \sum_{j\neq i}\partial_{x_j}\phi\,\partial_{v_j}f_\pm(R_xv)
			= \nabla_x\phi\cdot\nabla_vf_\pm(v),
		\end{align*} 
		and 
		\begin{align*}
			\nabla_x\phi\cdot R_xvf_\pm(R_xv) = \sum_{j\neq i}\partial_{x_j}\phi\, v_jf_\pm(v) = \nabla_x\phi\cdot vf_\pm(v).
		\end{align*}
	Next we prove \eqref{39}. 
		For the Boltzmann case, we apply the Carleman representation as in \cite[Appendix]{Global2019} to find that
		\begin{align*}
			&\quad\,\mu^{-1/2}Q(\mu^{1/2}f,\mu^{1/2}g)(R_xv)\\
			&= \int_{\R^3_h}\int_{E_{0,h}}\tilde{b}(\alpha,h)\1_{|\alpha|\ge|h|}\frac{|\alpha+h|^{\gamma+1+2s}}{|h|^{3+2s}}\mu^{1/2}(R_xv+\alpha-h)\\&\qquad\qquad\qquad\times\big(f(R_xv+\alpha)g(R_xv-h)-f(R_xv+\alpha-h)g(R_xv)\big)\,d\alpha dh \\
			&= \mu^{-1/2}Q(\mu^{1/2}f(R_xv),\mu^{1/2}g(R_xv)),
		\end{align*} where we use change of variable $(\alpha,h)\mapsto (R_x\alpha,R_xh)$. 
		
		For Landau case, we will use representation from \cite[Lemma 1]{Guo2002a}:
		\begin{align}
			\label{Landau}\notag
			\mu^{-1/2}Q(\mu^{1/2}f,\mu^{1/2}g) = \partial_{v_j}\Big[\Big\{\phi^{jk}*[\mu^{1/2}f]\Big\}\partial_{v_k}g\Big]
			-\Big\{\phi^{jk}*\Big[\frac{v_j}{2}\mu^{1/2}f\Big]\Big\}\partial_{v_k}g\\
			-\partial_{v_j}\Big[\Big\{\phi^{jk}*[\mu^{1/2}\partial_{v_k}f]\Big\}g\Big]
			-\Big\{\phi^{jk}*\Big[\frac{v_j}{2}\mu^{1/2}\partial_kf\Big]\Big\}g.
		\end{align}
		Notice that $\partial_{v_i}f(R_xv)=-\partial_{v_i}(f(R_xv))$  and $\partial_{v_j}f(R_xv)=\partial_{v_j}(f(R_xv))$ on $\Gamma_i$, for $j\neq i$. Then on $\Gamma_i$, 
		\begin{multline*}
			\sum_{j,k=1}^3\partial_{v_j}\Big[\Big\{\phi^{jk}*[\mu^{1/2}f]\Big\}\partial_{v_k}g\Big](R_xv)\\
			= \sum_{k=1}^3\partial_{v_i}\Big[-\Big\{\phi^{ik}*[\mu^{1/2}f]\Big\}(R_xv)\partial_{v_k}g(R_xv)\Big]+ \sum_{j\neq i}\sum_{k=1}^3\partial_{v_j}\Big[\Big\{\phi^{jk}*[\mu^{1/2}f]\Big\}(R_xv)\partial_{v_k}g(R_xv)\Big]\\
		= \partial_{v_i}\Big[\Big\{\phi^{ii}*[\mu^{1/2}f(R_xv)]\Big\}\partial_{v_i}(g(R_xv))\Big]+
			\sum_{k\neq i}\partial_{v_i}\Big[\Big\{\phi^{ik}*[\mu^{1/2}f(R_xv)]\Big\}\partial_{v_k}(g(R_xv))\Big]\qquad\ \\
			\ \ +
			\sum_{j\neq i}\partial_{v_j}\Big[\Big\{\phi^{ji}*[\mu^{1/2}f(R_xv)]\Big\}\partial_{v_i}(g(R_xv))\Big]+
			\sum_{j\neq i, k\neq i}\partial_{v_j}\Big[\Big\{\phi^{jk}*[\mu^{1/2}f(R_xv)]\Big\}\partial_{v_k}g(R_xv)\Big]\\
			= \sum_{j,k=1}^3\partial_{v_j}\Big[\Big\{\phi^{jk}*[\mu^{1/2}f(R_xv)]\Big\}\partial_{v_k}(g(R_xv))\Big],\qquad\qquad\qquad\qquad\qquad\qquad\qquad\qquad\qquad
		\end{multline*}where we apply \eqref{phiij} to deduce that $\phi^{ik}(R_xv) = -\phi^{ik}(v)$, $\phi^{ji}(R_xv)=-\phi^{ji}(v)$ when $k\neq i$, $j\neq i$. Similar calculation can be applied to the second to forth terms of \eqref{Landau} and we obtain \eqref{39} for Landau case. This completes the claim \eqref{2.10}.

		Noticing that 
		\begin{equation*}
			v\cdot\nabla_xf = v\cdot n(x)\partial_{n}f + v\cdot \tau_1(x)\partial_{\tau_1}f +v\cdot \tau_2(x)\partial_{\tau_2}f, 
		\end{equation*}  
		we can rewrite \eqref{4.1} as 
		\begin{align*}
			v\cdot n(x)\partial_n{f_\pm} =-v\cdot \tau_1(x)\partial_{\tau_1}f_\pm -v\cdot \tau_2(x)\partial_{\tau_2}f_\pm -\partial_t{f_\pm}\mp {\nabla_x\phi}\cdot v\mu^{1/2} + L_\pm {f} + {g_\pm}.
		\end{align*}
		Applying \eqref{33} and \eqref{2.10} to the right hand side, we can obtain that on $\partial\Omega$, 
		\begin{align*}
			R_xv\cdot n(x)\partial_n{f_\pm}(R_xv) = v\cdot n(x)\partial_nf_\pm(v).
		\end{align*}
		Since $R_xv\cdot n(x)= -v\cdot n(x)$, this implies \eqref{35} by taking tangent derivative. 	
		Apply $\partial_n$ to \eqref{4.1} twice and rewrite it to be 
		\begin{multline}\label{4.6}
			v\cdot n\partial_n\partial_nf_\pm = -v\cdot \tau_1(x)\partial_{\tau_1}\partial_nf_\pm -v\cdot \tau_2(x)\partial_{\tau_2}\partial_nf_\pm -\partial_t\partial_nf_\pm \\+L_\pm\partial_nf \mp {\partial_n\nabla_x\phi}\cdot v\mu^{1/2}+\partial_ng_\pm.
		\end{multline} 
		Here, on $\Gamma_i$, by taking tangent derivatives on \eqref{4.4}, we have $\pa_n\pa_{x_j}\phi=0$ for $j\neq i$ and hence 
		\begin{align*}
			\partial_n\nabla_x\phi\cdot R_xv\mu^{1/2}(R_xv) = \partial_n\partial_{x_i}\phi (R_xv)_i\mu^{1/2}(v) = -\partial_n\partial_{x_i}\phi v_i\mu^{1/2}.
		\end{align*}
		When $g_\pm$ is given by \eqref{g}, we have on $\Gamma_i$ that 
		\begin{align*}
			\partial_ng_\pm &= \pm\partial_n\nabla_x\phi\cdot\nabla_vf_\pm\pm\nabla_x\phi\cdot\partial_n\nabla_vf_\pm\mp\frac{1}{2}\partial_n\nabla_x\phi\cdot vf_\pm
			\\&\quad\mp\frac{1}{2}\nabla_x\phi\cdot v\partial_nf_\pm+\Gamma_\pm(\partial_nf,f)+\Gamma_\pm(f,\partial_nf)\\
			&=\pm\partial_n\pa_{x_i}\phi\,\pa_{v_i}f_\pm \pm\sum_{j\neq i}\pa_{x_j}\phi\,\partial_n\pa_{v_j}f_\pm \mp\frac{1}{2}\partial_n\pa_{v_i}\phi\, v_if_\pm
			\\&\quad\mp\frac{1}{2}\sum_{j\neq i}\pa_{x_j}\phi\, v_j\partial_nf_\pm +\Gamma_\pm(\partial_nf,f)+\Gamma_\pm(f,\partial_nf). 
		\end{align*}
		Together with \eqref{35} and \eqref{39}, we know that on $\Gamma_i$, 
		\begin{align*}
			\partial_ng_\pm(R_xv) = - \partial_ng_\pm(v). 
		\end{align*}
		Combining the above identities and \eqref{4.6}, we have 
		\begin{align*}
			R_xv\cdot n(x)\partial^2_{n}f(x,R_xv) = -v\cdot n(x)\partial^2_{n}f(x,v).
		\end{align*}This gives \eqref{37}$_1$ and \eqref{37}$_2$ follows by taking tangent derivatives. This completes the proof of Lemma \ref{Lem24}. 
		\end{proof}

	As a corollary, by definition \eqref{abc}, we have the following boundary values for $[a_\pm,b,c]$.
	\begin{Lem}\label{Lem25}
		Let $(f,E)$ be the solution to \eqref{4.1}, \eqref{4.2}, \eqref{4.3} and \eqref{4.4}. Define $[a_\pm,b,c]$ by \eqref{abc}. 
		For $i=1,2,3$ and any $x\in\Gamma_i$, we have 
		\begin{align}\label{Lem25a}
			\partial_{x_i}c(x) =\partial_{x_i}a_\pm(x)=\partial_{x_i}b_j(x)=\pa_{x_ix_i}b_i(x)=	b_i(x) = 0,
		\end{align}for $j\neq i$. 
		As a consequence, 
		\begin{equation}\label{2.14}
			\begin{aligned}
				\sum_{i,j=1}^3\|\partial_{x_ix_j}a_\pm\|^2_{L^2_x} = \|\Delta_xa_\pm\|^2_{L^2_x},\\ 
				\sum_{i,j=1}^3\|\partial_{x_ix_j}b\|^2_{L^2_x} = \|\Delta_xb\|^2_{L^2_x}, \\
				\sum_{i,j=1}^3\|\partial_{x_ix_j}c\|^2_{L^2_x} = \|\Delta_xc\|^2_{L^2_x}. 
			\end{aligned}
		\end{equation}
		Moreover, on $\Gamma_i$, we have 
		\begin{equation}\label{4.16}
			\pa_{x_ix_ix_i}\phi = 0,
		\end{equation}
	\end{Lem}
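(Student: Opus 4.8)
The strategy is to translate the specular-reflection identities from Lemma \ref{Lem24} into statements about the velocity moments $[a_\pm,b,c]$, using the definitions \eqref{abc} and the fact that on $\Gamma_i$ the reflection map $R_x$ sends $v_i\mapsto -v_i$ and fixes the other components. I would work one boundary piece $\Gamma_i$ at a time and exploit the parity of the Maxwellian-weighted test functions $\mu^{1/2}, v_j\mu^{1/2}, (|v|^2-3)\mu^{1/2}$ under $v_i\mapsto -v_i$.

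First, for the tangential derivatives $\partial_{x_j}$ with $j\neq i$: since $\partial_{x_j}=\partial_{\tau_k}$ for some $k\in\{1,2\}$ along $\Gamma_i$, the identity \eqref{34} gives $\partial_{x_j}f(x,R_xv)=\partial_{x_j}f(x,v)$. Writing $a_\pm = (\mu^{1/2},f_\pm)_{L^2_v}$ and changing variables $v\mapsto R_xv$ in the integral (which has unit Jacobian and leaves $\mu^{1/2}$ invariant), one gets $\partial_{x_j}a_\pm(x) = (\mu^{1/2},\partial_{x_j}f_\pm(R_x\cdot))_{L^2_v}=\partial_{x_j}a_\pm(x)$, which is vacuous — so instead I use the \emph{normal} derivative: $\partial_{x_i}=\partial_n$ (up to sign), and \eqref{35} gives $\partial_{x_i}f(x,R_xv)=-\partial_{x_i}f(x,v)$. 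Then $\partial_{x_i}a_\pm(x) = \int \mu^{1/2}(v)\,\partial_{x_i}f_\pm(x,v)\,dv = \int \mu^{1/2}(R_xv)\,\partial_{x_i}f_\pm(x,R_xv)\,dv = -\partial_{x_i}a_\pm(x)$, hence $\partial_{x_i}a_\pm=0$ on $\Gamma_i$. The same parity argument with test function $(|v|^2-3)\mu^{1/2}$ (even under $v_i\mapsto-v_i$) gives $\partial_{x_i}c=0$. For $b_j=\tfrac12(v_j\mu^{1/2},f_++f_-)$ with $j\neq i$: here $v_j\mu^{1/2}$ is even under $v_i\mapsto -v_i$, so the same computation with \eqref{35} yields $\partial_{x_i}b_j=0$; for $b_i$ itself, $v_i\mu^{1/2}$ is \emph{odd}, so applying \eqref{33} (the zeroth-order identity) directly gives $b_i(x)=\tfrac12\int v_i\mu^{1/2}(v)(f_++f_-)(x,v)\,dv = -b_i(x)=0$. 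Finally $\partial_{x_ix_i}b_i=0$ follows from \eqref{37} applied against $v_i\mu^{1/2}$: $\partial^2_n f$ is even under $R_x$, $v_i\mu^{1/2}$ is odd, so the moment vanishes.

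The consequences \eqref{2.14} are then pure integration by parts: for instance $\|\partial_{x_ix_j}a_\pm\|_{L^2_x}^2 = (\partial_{x_ix_j}a_\pm,\partial_{x_ix_j}a_\pm)_{L^2_x}$; integrating by parts in $x_i$ moves a derivative, producing a boundary term on $\Gamma_i$ involving $\partial_{x_j}(\partial_{x_i}a_\pm)$ or similar, which vanishes by \eqref{Lem25a} (one must check the boundary term is always of a form killed by \eqref{Lem25a} — when $i=j$ there is no boundary term issue, when $i\neq j$ the relevant trace is $\partial_{x_i}a_\pm|_{\Gamma_i}=0$ or $\partial_{x_j}a_\pm|_{\Gamma_j}=0$). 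After the swap one recombines $\sum_{i,j}\partial_{x_ix_j}\partial_{x_ix_j} = \sum_{i,j}\partial_{x_ix_i}\partial_{x_jx_j}$, which is exactly $\|\Delta_x a_\pm\|^2$. The identity \eqref{4.16} is obtained by differentiating the Poisson equation \eqref{4.3}: $-\partial_{x_i}\Delta_x\phi = \partial_{x_i}(a_+-a_-)$; on $\Gamma_i$ the right side vanishes by \eqref{Lem25a}, and $\partial_{x_i}\Delta_x\phi = \partial_{x_ix_ix_i}\phi + \partial_{x_i}(\partial_{x_jx_j}+\partial_{x_kx_k})\phi$; the last group vanishes because $\partial_{x_j}\phi|_{\Gamma_i}$ and $\partial_{x_k}\phi|_{\Gamma_i}$ are constant along the tangential directions (being $0$ by the Neumann condition \eqref{4.4}, so their tangential derivatives vanish), leaving $\partial_{x_ix_ix_i}\phi=0$.

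The main obstacle is bookkeeping the parities correctly and, for \eqref{2.14}, making sure every boundary term generated by integration by parts is genuinely one of the traces listed in \eqref{Lem25a} — in particular handling the mixed cases $i\neq j$ where one must integrate by parts in the correct variable so that the surviving trace on $\Gamma_i$ is among $\{\partial_{x_i}a_\pm, \partial_{x_i}b_j, b_i, \partial_{x_ix_i}b_i\}$ rather than some uncontrolled quantity. There is also a mild regularity point: the moment identities require $f(x,\cdot)$ and its $x$-derivatives to lie in $L^2_v$ against Gaussians so that the change of variables $v\mapsto R_xv$ is justified, which is guaranteed by the energy space $H^3_{x,v}$ with the weights in \eqref{w22}.
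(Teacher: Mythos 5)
Your derivation of the boundary values \eqref{Lem25a} — pairing the parity of the Maxwellian test functions under $v_i\mapsto -v_i$ with the specular identities of Lemma \ref{Lem24} and a change of variables $v\mapsto R_xv$ — is exactly the paper's proof, and so is the integration-by-parts reduction for \eqref{2.14} (the paper packages the boundary data you list as the alternative condition ``$\partial_{x_k}\varphi=0$ or $\partial_{x_kx_k}\varphi=0$ on $\Gamma_k$,'' which is precisely what is needed to make the $b_j$ case close using $b_j|_{\Gamma_j}=0$ and $\partial_{x_jx_j}b_j|_{\Gamma_j}=0$).

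Your justification of \eqref{4.16}, however, contains a real error at the point where you dispatch the cross terms $\partial_{x_i}(\partial_{x_jx_j}+\partial_{x_kx_k})\phi$ on $\Gamma_i$. You assert that $\partial_{x_j}\phi|_{\Gamma_i}$ and $\partial_{x_k}\phi|_{\Gamma_i}$ vanish ``by the Neumann condition \eqref{4.4}.'' This is false: on $\Gamma_i$ the Neumann condition kills only the \emph{normal} derivative $\partial_{x_i}\phi$, and says nothing about the tangential derivatives $\partial_{x_j}\phi$, $j\neq i$, which are generically nonzero there. Moreover, even if one had $\partial_{x_jx_j}\phi=0$ on $\Gamma_i$, that boundary trace would not control the normal derivative $\partial_{x_i}\partial_{x_jx_j}\phi$, so the reasoning would still not reach the needed conclusion. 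The correct step — the one the paper uses — is to commute the derivatives and differentiate the condition that actually vanishes: write $\partial_{x_i}\partial_{x_jx_j}\phi=\partial_{x_jx_j}\!\left(\partial_{x_i}\phi\right)$, observe that $\partial_{x_i}\phi\equiv 0$ on $\Gamma_i$ by \eqref{4.4}, and note that $\partial_{x_jx_j}$ with $j\neq i$ is a purely tangential operator along $\Gamma_i$, so that applying it to a function vanishing identically on $\Gamma_i$ again yields zero on $\Gamma_i$. Combining this with $\partial_{x_i}\Delta_x\phi=-\partial_{x_i}(a_+-a_-)=0$ on $\Gamma_i$ from \eqref{Lem25a} then gives $\partial_{x_ix_ix_i}\phi=0$ on $\Gamma_i$. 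Please replace the incorrect invocation of the Neumann condition with this tangential-differentiation argument; the rest of the write-up is sound.
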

	\begin{proof}
		Fix $x\in\Gamma_i$. Notice that on the boundary of union of cubes, we have $\partial_nf = \partial_{x_i}f$ or $-\partial_{x_i}f$. Then by \eqref{35} and change of variable $v\mapsto R_xv$, we have on $\Gamma_i$ that 
		\begin{align*}
			\partial_{x_i}c &= \frac{1}{12}\int_{\R^3}\partial_{x_i}\big(f_+(x,R_xv)+f_-(x,R_xv)\big)|R_xv|^2\mu^{1/2}(R_xv)\,dv\\
			&= -\frac{1}{12}\int_{\R^3}\partial_{x_i}\big(f_+(x,v)+f_-(x,v)\big)|v|^2\mu^{1/2}(v)\,dv = 0. 
		\end{align*}
		Similarly, on interior of $\Gamma_i$, we have 
		\begin{equation*}
			\partial_{x_i}a_\pm = \int_{\R^3}\partial_{x_i}f_\pm(x,R_xv)\mu^{1/2}(R_xv)\,dv
			= -\int_{\R^3}\partial_{x_i}f_\pm(x,v)\mu^{1/2}(v)\,dv = 0,  
		\end{equation*}
		and for $j\neq i$, 
		\begin{align*}
			\partial_{x_i}b_j &= \frac{1}{2}\int_{\R^3}\partial_{x_i}\big(f_+(x,R_xv)+f_-(x,R_xv)\big)(R_xv)_j\mu^{1/2}(R_xv)\,dv\\
			&= -\frac{1}{2}\int_{\R^3}\partial_{x_i}\big(f_+(x,v)+f_-(x,v)\big)v_j\mu^{1/2}(v)\,dv = 0.  
		\end{align*}
		On $\Gamma_i$, we have $(R_xv)_i=-v_i$ and hence by \eqref{33} and \eqref{37}, we have 
		\begin{align*}
			b_i(x) &= \frac{1}{2}\int_{\R^3}\big(f_+(x,R_xv)+f_-(x,R_xv)\big)(R_xv)_i\mu^{1/2}(R_xv)\,dv\\
			&= -\frac{1}{2}\int_{\R^3}\big(f_+(x,v)+f_-(x,v)\big)v_i\mu^{1/2}(v)\,dv = 0, 
		\end{align*}
	and 
	\begin{align*}
		\pa_{x_ix_i}b_i &= \frac{1}{2}\int_{\R^3}\big(\pa_{x_ix_i}f_+(x,R_xv)+\pa_{x_ix_i}f_-(x,R_xv)\big)(R_xv)_i\mu^{1/2}(R_xv)\,dv\\
		&= -\frac{1}{2}\int_{\R^3}\big(\pa_{x_ix_i}f_+(x,v)+\pa_{x_ix_i}f_-(x,v)\big)v_i\mu^{1/2}(v)\,dv = 0. 
	\end{align*}
		For any $\varphi=\varphi(x)$ satisfying that $\partial_{x_k}\varphi=0$ or $\partial_{x_kx_k}\varphi=0$ on $\Gamma_k$ for any $k=1,2,3$. We have 
		\begin{align*}
			\int_{\Omega}|\partial_{x_ix_j}\varphi|^2\,dx
			&= \int_{\Gamma_i}\partial_{x_ix_j}\varphi\,\partial_{x_j}\varphi\,dS(x) - \int_{\Gamma_j}\partial_{x_ix_i}\varphi\,\partial_{x_j}\varphi\,dx + \int_{\Omega}\partial_{x_ix_i}\varphi\,\partial_{x_jx_j}\varphi\,dx
			\\&= \int_{\Omega}\partial_{x_ix_i}\varphi\partial_{x_jx_j}\varphi\,dx, 
		\end{align*}
		where $dS$ is the spherical measure. Then we have $\sum_{i,j}\|\partial_{x_ix_j}\varphi\|^2_{L^2_x} = \|\Delta_x\varphi\|^2_{L^2_x}$. Replacing $\varphi$ to be $a_\pm$, $b_j$ and $c$, we obtain \eqref{2.14}.
		
		For the proof of \eqref{4.16}, by taking tangent derivatives on \eqref{4.4}, we have $\pa_{x_ix_jx_j}\phi=0$ on $\Gamma_i$ for $j\neq i$. Then by \eqref{4.3}, we have on $\Gamma_i$ that 
		\begin{align*}
			\pa_{x_ix_ix_i}\phi = -\sum_{j\neq i}\pa_{x_ix_jx_j}\phi - \pa_{x_i}(a_+-a_-) = 0,
		\end{align*}
		where we used \eqref{Lem25a} for $\pa_{x_i}a_\pm=0$. 
		This completes Lemma \ref{Lem25}. 
		
		\end{proof}

	With the help of \eqref{4.16}, we are able to obtain the third derivative version for Lemma \ref{Lem24} and \ref{Lem25}. 
	\begin{Lem}\label{Lem26}
		Assuming the same conditions in Lemma \ref{Lem24}. 
		Then we have for $x\in\pa\Omega$ that 
		\begin{align}\label{38}
			\partial_n^3f_\pm(R_xv) = \partial_n^3f_\pm(v),\text{ on }v\cdot n(x)\neq 0. 
		\end{align}
		Consequently, we have on $\Gamma_i$$(i=1,2,3)$ that 
		\begin{align}\label{38a}
			\pa_{x_ix_ix_i}c(x) =\partial_{x_ix_ix_i}a_\pm(x)=\partial_{x_ix_ix_i}b_j(x) = 0,
		\end{align}for $j\neq i$. 
	\end{Lem}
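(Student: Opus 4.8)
\textbf{Proof plan for Lemma \ref{Lem26}.}
The idea is to mirror the argument of Lemma \ref{Lem24}, pushing it one order higher, now that \eqref{4.16} supplies the missing third-order boundary information for $\phi$. First I would apply $\partial_n^2$ to \eqref{4.1} (equivalently, differentiate \eqref{4.6} once more in the normal direction) and rewrite the resulting identity as
\begin{equation*}
	v\cdot n\,\partial_n^3 f_\pm = -v\cdot\tau_1\,\partial_{\tau_1}\partial_n^2 f_\pm -v\cdot\tau_2\,\partial_{\tau_2}\partial_n^2 f_\pm -\partial_t\partial_n^2 f_\pm + L_\pm\partial_n^2 f \mp \partial_n^2\nabla_x\phi\cdot v\mu^{1/2}+\partial_n^2 g_\pm.
\end{equation*}
On $\Gamma_i$ we have $\partial_n = \pm\partial_{x_i}$, so all the terms on the right are built from $\partial_{\tau_j}\partial_n^2 f$, $\partial_n^2 f$, $\partial_n g$-type expressions and $\partial_n^2\nabla_x\phi$, for which the parity under $v\mapsto R_xv$ is already known: by \eqref{37} the quantities $\partial_n^2 f$ and $\partial_{\tau_j}\partial_n^2 f$ are even, and by \eqref{2.10} together with the $\partial_n$-version established inside the proof of Lemma \ref{Lem24} the collision and transport terms transform with the expected sign.

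The one genuinely new input needed is the parity of $\partial_n^2\nabla_x\phi\cdot v\mu^{1/2}$ on $\Gamma_i$. Writing $\partial_n^2\nabla_x\phi\cdot R_xv\,\mu^{1/2}(R_xv) = \partial_{x_ix_i}\partial_{x_j}\phi\,(R_xv)_j\,\mu^{1/2}(v)$ and splitting the $j$-sum: for $j\neq i$ we use the tangential derivatives of \eqref{4.4}, which give $\partial_{x_ix_i}\partial_{x_j}\phi = 0$ on $\Gamma_i$; for $j=i$ the term is $\partial_{x_ix_ix_i}\phi\,(R_xv)_i = -\partial_{x_ix_ix_i}\phi\,v_i$, which vanishes by \eqref{4.16}. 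Hence $\partial_n^2\nabla_x\phi\cdot v\,\mu^{1/2} = 0$ on $\Gamma_i$ and contributes nothing. Likewise, when $g_\pm$ is given by \eqref{g}, expanding $\partial_n^2 g_\pm$ by the Leibniz rule produces terms of the schematic form $\partial_n^k\nabla_x\phi\cdot\partial_n^{2-k}(\nabla_v f_\pm$ or $v f_\pm)$ and $\Gamma_\pm$-bilinear terms in $\partial_n^{k}f$; the $\phi$-factors with a normal derivative hitting the $x_i$-component are killed on $\Gamma_i$ exactly as in Lemma \ref{Lem24} (using \eqref{4.4} and \eqref{4.16}), the surviving $\partial_{x_j}\phi$ ($j\neq i$) factors carry no $v_i$ sign change, the $\Gamma_\pm$-terms are even by \eqref{39}, and the remaining $\partial_n^k f_\pm$ factors contribute the sign $(-1)^k$; matching exponents one checks every term is odd under $v\mapsto R_xv$, so $\partial_n^2 g_\pm(R_xv) = -\partial_n^2 g_\pm(v)$ on $\Gamma_i$.

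Collecting signs in the displayed identity: each right-hand term is odd in $v\mapsto R_xv$ except the two that vanish, so the right side changes sign, while on the left $R_xv\cdot n = -v\cdot n$; therefore $\partial_n^3 f_\pm(R_xv) = \partial_n^3 f_\pm(v)$ on $v\cdot n\neq 0$, which is \eqref{38}. For \eqref{38a}, fix $x\in\Gamma_i$ and use $\partial_n = \pm\partial_{x_i}$ together with the change of variable $v\mapsto R_xv$ in the moment integrals \eqref{abc}: since $(R_xv)_i = -v_i$ and $|R_xv| = |v|$, the extra factor $v_i$ in $b_j$ ($j\neq i$) and $|v|^2$ in $c$, $a_\pm$ combine with the even parity from \eqref{38} and $\mu^{1/2}(R_xv) = \mu^{1/2}(v)$ to force $\partial_{x_ix_ix_i}a_\pm = \partial_{x_ix_ix_i}c = \partial_{x_ix_ix_i}b_j = 0$ on $\Gamma_i$, exactly as in the proof of Lemma \ref{Lem25}. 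The main obstacle is purely bookkeeping: tracking the parity of every term in $\partial_n^2 g_\pm$, and making sure the third-order $\phi$-boundary identity \eqref{4.16} is invoked precisely where the naive estimate would leave a nonzero contribution.
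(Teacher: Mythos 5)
Your parity bookkeeping has two genuine errors, and they happen to cancel so as to reproduce the sign printed in \eqref{38}, but they break the step from \eqref{38} to \eqref{38a}. First, it is not true that $\partial_{x_ix_ix_j}\phi=0$ on $\Gamma_i$ for $j\neq i$: tangential differentiation of $\partial_{x_i}\phi=0$ on $\Gamma_i$ gives $\partial_{x_jx_i}\phi=0$ and $\partial_{x_kx_jx_i}\phi=0$ there for $j,k\neq i$, but $\partial_{x_ix_ix_j}\phi$ involves a \emph{normal} derivative of a quantity that is only known to vanish on the boundary, so it does not vanish in general. The identity \eqref{4.16} kills only the $j=i$ component; after that one has $\partial_n^2\nabla_x\phi\cdot v\mu^{1/2}=\sum_{j\neq i}\partial_{x_ix_ix_j}\phi\,v_j\mu^{1/2}$, which is not zero --- it is \emph{even} under $v\mapsto R_xv$ (only $v_j$ with $j\neq i$ appear), and that evenness, not vanishing, is what the paper uses. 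Second, your statement that ``each right-hand term is odd'' contradicts your own observation that $\partial_n^2 f$ and $\partial_{\tau_j}\partial_n^2 f$ are even by \eqref{37}: the term $v\cdot\tau_j\,\partial_{\tau_j}\partial_n^2 f_\pm$ is a product of two even factors (on $\Gamma_i$, $v\cdot\tau_j=\pm v_k$ with $k\neq i$ is invariant under $R_x$), hence even, and the same check shows that \emph{every} term on the right of \eqref{4.17} is even.

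Consequently the correct conclusion, exactly as in the paper's own proof, is $R_xv\cdot n\,\partial_n^3 f_\pm(R_xv)=v\cdot n\,\partial_n^3 f_\pm(v)$, which with $R_xv\cdot n=-v\cdot n$ gives the \emph{odd} parity $\partial_n^3 f_\pm(R_xv)=-\partial_n^3 f_\pm(v)$; the displayed \eqref{38} appears to carry a sign typo (note the odd/even/odd alternation $f$, $\partial_nf$, $\partial_n^2 f$, $\partial_n^3 f$). This odd parity is precisely what drives \eqref{38a}: with it, the change of variable $v\mapsto R_xv$ in the moment integrals \eqref{abc} yields $\partial_{x_ix_ix_i}c=-\partial_{x_ix_ix_i}c$, etc., because the remaining integrand factors $\mu^{1/2}$, $(|v|^2-3)\mu^{1/2}$, and $v_j\mu^{1/2}$ with $j\neq i$ are all invariant under $R_x$. (Also, the moment for $b_j$ involves $v_j$, not $v_i$.) With the even version you arrived at, the same change of variable returns a tautology rather than the vanishing you need, so the derivation of \eqref{38a} does not close.
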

	\begin{proof}
		By taking normal derivative of \eqref{4.6}, we have 
		\begin{multline}\label{4.17}
			v\cdot n\,\partial_n\partial_n\partial_nf_\pm = -v\cdot \tau_1(x)\partial_{\tau_1}\partial_n\partial_nf_\pm -v\cdot \tau_2(x)\partial_{\tau_2}\partial_n\partial_nf_\pm -\partial_t\partial_n\partial_nf_\pm \\+L_\pm\partial_n\partial_nf \mp {\partial_n\partial_n\nabla_x\phi}\cdot v\mu^{1/2}+\partial_n\partial_ng_\pm.
		\end{multline}
		Notice that on $\Gamma_i$, by \eqref{4.16}, we have 
		\begin{align*}
			\partial_n\partial_n\nabla_x\phi\cdot v\mu^{1/2} = \sum_{j\neq i}\pa_{x_ix_ix_j}\phi\ v_j\mu^{1/2},
		\end{align*}
		and when $g$ is given by \eqref{g}, one has 
		\begin{align*}
			\partial_n\partial_ng_\pm
			&=\pm\partial_n\partial_n\nabla_x\phi\cdot\nabla_vf_\pm
			\pm2\partial_n\nabla_x\phi\cdot\nabla_v\partial_nf_\pm
			\pm\nabla_x\phi\cdot\partial_n\partial_n\nabla_vf_\pm\\
			&\quad\mp\frac{1}{2}\partial_n\partial_n\nabla_x\phi\cdot vf_\pm
			\mp\partial_n\nabla_x\phi\cdot v\partial_nf_\pm
			\mp\frac{1}{2}\nabla_x\phi\cdot v\partial_n\partial_nf_\pm\\ &\quad+\Gamma_\pm(\partial_n\partial_nf,f)+2\Gamma_\pm(\partial_nf,\partial_nf)+\Gamma_\pm(f,\partial_n\partial_nf)\\
			&=\pm\sum_{j\neq i}\pa_{x_ix_ix_j}\phi\,\pa_{v_j}f_\pm
			\pm2\pa_n\pa_{x_i}\phi\,\pa_{v_i}\partial_nf_\pm
			\pm\sum_{j\neq i}\pa_{v_j}\phi\,\partial_{x_ix_i}\pa_{v_j}f_\pm\\
			&\quad\mp\frac{1}{2}\sum_{j\neq i}\partial_{x_ix_ix_j}\phi\, v_jf_\pm
			\mp\partial_n\pa_{x_i}\phi\, v_i\partial_nf_\pm
			\mp\frac{1}{2}\sum_{j\neq i}\pa_{v_j}\phi\, v_j\partial_n\partial_nf_\pm\\ &\quad+\Gamma_\pm(\partial_n\partial_nf,f)+2\Gamma_\pm(\partial_nf,\partial_nf)+\Gamma_\pm(f,\partial_n\partial_nf).  
		\end{align*}
		Applying Lemma \ref{Lem24} and identity \eqref{39}, we have on $\Gamma_i$ that 
		\begin{equation*}
			\sum_{j\neq i}\pa_{x_ix_ix_j}\phi \, (R_xv)_j\mu^{1/2}(R_xv) = \sum_{j\neq i}\pa_{x_ix_ix_j}\phi\, v_j\mu^{1/2}(v),
		\end{equation*}
		\begin{equation*}
			L_\pm\partial_n\partial_nf(R_xv) = L_\pm\partial_n\partial_nf(v),
		\end{equation*}
		and
		\begin{equation*}
			\partial_n\partial_ng_\pm(R_xv) = \partial_n\partial_ng_\pm(v).
		\end{equation*}
		Combining the above identities and Lemma \ref{Lem24}, we have from \eqref{4.17} that on $\Gamma_i$ 
		\begin{align*}
			R_xv\cdot n\,\partial_n\partial_n\partial_nf_\pm(R_xv) = v\cdot n\,\partial_n\partial_n\partial_nf_\pm(v).
		\end{align*}
		Note that $R_xv\cdot n(x)=-v\cdot n(x)$ on $\Gamma_i$, we obtain \eqref{38}. 
		The proof of \eqref{38a} is similar to \eqref{Lem25a} by using specular reflection boundary condition \eqref{37} and \eqref{38} for high-order derivative and we omit the details for brevity. Then we complete the proof of Lemma \ref{Lem26}.
	\end{proof}

	Next we give the estimates on macroscopic parts $[a_\pm,b,c]$. The idea is similar to \cite{Deng2021}. However, with the electric potential $\phi$, we need more careful calculations. 
	
	\begin{Thm}\label{Thm41}
		Let $K=2,3$ be the total order of derivative. 
		Let $\gamma\ge -3$ for Landau case, $\gamma>\max\{-3,-2s-3/2\}$ for Boltzmann case and $T>0$. 
		Let $(f,E)$ be the solution of \eqref{4.1}, \eqref{4.2}, \eqref{4.3} and \eqref{4.4} in bounded domain $\Omega$ with initial data $(f_0,E_0)$. Then there exists an instant energy functional $\E_{int}(t)$ satisfying
		\begin{align*}
			\E_{int}(t)\lesssim \sum_{|\alpha|\le K} \|\partial^\alpha f\|^2_{L^2_{x}},
		\end{align*} such that 
		\begin{multline*}
			\partial_t\E_{int}(t) + \lambda \sum_{|\alpha|\le K} \|\partial^\alpha[{a_+},{a_-},{b},{c}]\|^2_{L^2_{x}}+\lambda \sum_{|\alpha|\le K}\|{\partial^\alpha E}\|^2_{L^2_{x}}\\
			\lesssim \sum_{|\alpha|\le K}\|\{\I-\P\}{\partial^\alpha f}\|^2_{L^2_{x}L^2_{\gamma/2}} + \sum_{|\alpha|\le K}\|(\partial^\alpha g,\zeta(v))_{L^2_v}\|_{L^2_x}^2+ \|E\|_{L^2_x}^4, 
		\end{multline*}for some constant $\lambda>0$, where $g=[g_+,g_-]$ is zero or given by \eqref{g}.  
	\end{Thm}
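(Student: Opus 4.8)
The plan is to derive the macroscopic dissipation estimate from the fluid-type system \eqref{19}--\eqref{98a} by a careful ``thirteen-moment'' type argument, following the strategy of \cite{Deng2021} but treating the Poisson coupling via the Neumann boundary condition. Throughout, I would work with $\partial^\alpha$ for $|\alpha|\le K$ (where $K=2$ or $3$), using the boundary identities from Lemmas \ref{Lem24}, \ref{Lem25} and \ref{Lem26} — in particular \eqref{Lem25a} and \eqref{38a} which say that on $\Gamma_i$ all the $x_i$-derivatives of $a_\pm,b_j$ ($j\ne i$), $c$, together with $b_i$ and $\pa_{x_ix_i}b_i$, vanish — so that integration by parts in $x$ of the transport term produces no boundary contributions, and $\sum_{i,j}\|\pa_{x_ix_j}\varphi\|^2=\|\Delta_x\varphi\|^2$ for $\varphi\in\{a_\pm,b,c\}$ by \eqref{2.14}.

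First I would estimate $\nabla_x c$: from the fifth equation of \eqref{19}, $\pa_{x_j}c = -\tfrac12\pa_t\Lambda_j((\I-\P)f\cdot[1,1]) + \tfrac12\Lambda_j(g_++g_-+h_++h_-)$. Applying $\partial^\alpha$, multiplying by $\partial^\alpha b$ or by $\partial^\alpha\nabla_x c$ and integrating over $\Omega$, moving the $\pa_t$ onto $\Lambda_j(\cdots)$ to create an ``interaction'' term (the $\pa_t$ of something bounded by $\|\{\I-\P\}f\|$ times $\|\nabla_x b\|$), one gets $\|\nabla_x\partial^\alpha c\|^2$ controlled by $\pa_t(\text{small})$ plus microscopic and forcing terms, using that $h_\pm$ itself is $v\cdot\nabla_x\{\I-\P\}f + L_\pm f$ hence controlled by $\|\partial^\alpha\{\I-\P\}f\|_{L^2_{D}}$-type norms. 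Then $\nabla_x b$: from the fourth equation of \eqref{19}, $\pa_{x_j}b_m+\pa_{x_m}b_j$ is expressible through $\pa_t(\cdots)$ and the forcing; testing against $\partial^\alpha(\pa_{x_j}b_m+\pa_{x_m}b_j)$ and summing, combined with the identity $\sum_{i,j}\|\pa_{x_ix_j}b\|^2=\|\Delta_x b\|^2$ and the already-controlled $\nabla_x c$, yields $\|\nabla_x\partial^\alpha b\|^2$ after an elliptic (Poincaré-type) estimate; here one needs $b$ to have the right boundary behaviour, which is exactly \eqref{Lem25a}/\eqref{38a}. Next $\nabla_x(a_++a_-)$: from the second equation of \eqref{19}, testing against $\nabla_x\partial^\alpha(\tfrac{a_++a_-}{2})$ and using $\pa_t b$ from the same equation to produce interaction terms with $\|\nabla_x b\|$, gives the bound. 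Finally for $a_+-a_-$ and $E$: differentiate \eqref{98a}, use $-\Delta_x\phi=a_+-a_-$ with $\pa_n\phi=0$ so that $E=-\nabla_x\phi$ satisfies $\|\nabla_x E\|\approx\|\Delta_x\phi\|=\|a_+-a_-\|$ up to lower order (Poincaré with zero mean), and couple the second equation of \eqref{98a} — which contains the crucial $-2E$ term — tested against $\partial^\alpha G$ or $\partial^\alpha E$ to extract $\|\partial^\alpha E\|^2$ and $\|\nabla_x\partial^\alpha(a_+-a_-)\|^2$; the nonlinear contribution of $g$ containing $\nabla_x\phi$ is where the $\|E\|_{L^2_x}^4$ term on the right arises, by placing $\|\nabla_x\phi\|_{L^\infty}\lesssim\|E\|_{H^2}$ on one factor and Cauchy–Schwarz.

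I would then assemble $\E_{int}(t)$ as a suitable linear combination (with small coefficients) of the interaction functionals produced at each step — schematically $\E_{int}=\sum_{|\alpha|\le K}\big(\kappa_1(\partial^\alpha\Lambda_j,\partial^\alpha b)+\kappa_2(\partial^\alpha\Theta_{jm},\partial^\alpha\pa_{x_j}b_m)+\kappa_3(\partial^\alpha b,\nabla_x\partial^\alpha(a_++a_-))+\kappa_4(\partial^\alpha G,\partial^\alpha\nabla_x(a_+-a_-))+\dots\big)$ — choosing the $\kappa$'s so that the bad cross terms cancel in a hierarchical order ($c$ first, then $b$, then $a_\pm$, then $E$), while $|\E_{int}(t)|\lesssim\sum_{|\alpha|\le K}\|\partial^\alpha f\|^2_{L^2_x}$ because each factor is either a macroscopic quantity or $\{\I-\P\}f$ tested against a Gaussian weight. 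The $L^2_{\gamma/2}$ norm on the right-hand side appears because all microscopic terms enter through $|\{\I-\P\}\partial^\alpha f|_{L^2_{D}}$, which dominates $|\langle v\rangle^{\gamma/2}\{\I-\P\}\partial^\alpha f|_{L^2_v}$-type quantities; and the $(\partial^\alpha g,\zeta(v))$ terms collect the moments of the forcing $g$.

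The main obstacle I expect is the Poisson/electric-field part: unlike the channel domain of \cite{Deng2021} or the periodic case, here $\phi$ solves a \emph{pure Neumann} problem, so (i) one must carry the zero-mean constraints on $a_+-a_-$ and on $\phi$ throughout to make the elliptic estimate $\|\nabla_x\partial^\alpha E\|\lesssim\|\partial^\alpha(a_+-a_-)\|+(\text{l.o.t.})$ legitimate, and (ii) one must verify that the high-order boundary conditions $\pa_{x_ix_i x_i}\phi=0$ on $\Gamma_i$ from \eqref{4.16} are compatible with integrating by parts the terms $\pa^\alpha(\nabla_x\phi\cdot v\mu^{1/2})$ and $\pa^\alpha(\nabla_x\phi\cdot\nabla_v f)$ without boundary leftovers — this is where Lemmas \ref{Lem24}--\ref{Lem26} are indispensable. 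A secondary technical point is keeping the temporal ``interaction'' terms genuinely bounded by $\E_{int}$ (so that after time-integration they are absorbed), which forces the moments $\Theta_{jm},\Lambda_j,G$ to be taken only against rapidly decaying weights — consistent with the $\zeta(v)\lesssim e^{-\lambda|v|^2}$ convention fixed before the statement.
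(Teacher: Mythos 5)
Your route is genuinely different from the paper's. You propose to work directly from the fluid-type system \eqref{19}--\eqref{98a}, extracting each macroscopic field from the appropriate moment equation, moving $\partial_t$ off by hand to define interaction functionals of the schematic form $(\partial^\alpha\Lambda_j,\partial^\alpha b)$, $(\partial^\alpha\Theta_{jm},\partial^\alpha\partial_{x_j}b_m)$, etc., and assembling $\E_{int}$ as their weighted sum. The paper instead builds $\E_{int}$ from pairings $(\partial^\alpha f,\Phi_\bullet)_{L^2_{x,v}}$, where $\Phi_\bullet$ is a Gaussian-weighted polynomial in $v$ times $v\cdot\nabla_x\phi_\bullet$ and $\phi_\bullet$ solves an auxiliary Poisson problem $-\Delta_x\phi_\bullet=\partial^\alpha[\text{macro}]$ with mixed Dirichlet--Neumann data chosen according to the parity of $\alpha_i$. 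That choice is what makes the boundary term $S_5$ in \eqref{100} vanish by a parity cancellation against the specular identities of Lemma \ref{Lem24}, and what makes $\|\nabla_x\phi_\bullet\|\lesssim\sum_{|\alpha'|\le K-1}\|\partial^{\alpha'}[\text{macro}]\|$ accessible by a single integration by parts using \eqref{Lem25a} and \eqref{38a}. In effect the elliptic solve ``lowers'' a derivative, so the paper obtains $\|\partial^\alpha c\|^2$, $\|\partial^\alpha b\|^2$, \dots\ directly up to $|\alpha|=K$ and the $\partial_t$ hitting the test function only produces $\partial_t\nabla_x\phi_\bullet$, bounded by $K-1$ time-differentiated macroscopic quantities. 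In your raw moment method the analogous step substitutes $\partial_t c$ from \eqref{19}$_3$ inside $(\partial^\alpha\Lambda,\partial_t\partial^\alpha\nabla_x c)$, which naively puts $|\alpha|+2$ spatial derivatives on $b$; to stay within $K$ you must restrict the core estimate to $|\alpha|\le K-1$ and climb to $K$ via $\|\Delta_x[\cdot]\|$ and Poincar\'e. Both routes are legitimate; the paper's is more economical in bounded domains because the parity-vanishing of boundary terms is localized in one place.

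There is, however, a genuine error in your account of where $\|E\|_{L^2_x}^4$ comes from. You attribute it to the part of $g$ containing $\nabla_x\phi$, but that contribution is exactly what the $\sum_{|\alpha|\le K}\|(\partial^\alpha g,\zeta)_{L^2_v}\|_{L^2_x}^2$ term already records (see \eqref{4.58}). The quartic term has a different source: the zeroth-order Poincar\'e step for $c$. All of the moment/test-function machinery only controls $\|\nabla_x c\|$ and higher; to recover $\|c\|_{L^2_x}$ one writes $\|c\|_{L^2_x}\lesssim\|\nabla_x c\|_{L^2_x}+|\int_\Omega c\,dx|$, and the energy conservation law in \eqref{conservation_bounded} forces $\int_\Omega c\,dx=-\tfrac1{12}\|E\|_{L^2_x}^2$, which is \emph{not} zero. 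Squaring yields $\|E\|_{L^2_x}^4$. Your observation about carrying zero-mean constraints is correct for $a_\pm$ and for $\phi$, whose spatial integrals do vanish, but it does not apply to $c$; missing this, the $|\alpha|=0$ part of the $c$-estimate cannot be closed and the claimed inequality would have to be weakened. Once this is repaired the remainder of your plan is sound, provided every integration by parts in $x$ is accompanied by the parity check from Lemmas \ref{Lem24}--\ref{Lem26} (which you correctly identify as the indispensable input) and the Korn-type step for $b$ uses $\partial_{x_i}b_j=b_i=\partial_{x_ix_i}b_i=0$ on $\Gamma_i$ from \eqref{Lem25a}.
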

	\begin{proof}
		Let $|\alpha|\le K$ and we restrict
		\begin{equation}\label{323}
			\partial^\alpha=\partial_{x_ix_i} \text{ for some }i=1,2,3 \text{ when } |\alpha|=2.
		\end{equation} 
	Using Lemma \ref{Lem25}, we only need to consider $\|\Delta_x[{a_+},{a_-},{b},{c}]\|^2_{L^2_{x}}$ when estimating the second order derivatives of $[{a_+},{a_-},{b},{c}]$. Applying $\partial^\alpha$ to \eqref{4.1}, we have 
		\begin{equation}\label{40d}
			\partial_t{\partial^\alpha f_\pm}+v\cdot \na_x{\partial^\alpha f_\pm} \pm \partial^\alpha {\nabla_x\phi}\cdot v\mu^{1/2} - L_\pm {\partial^\alpha f} = {\partial^\alpha g_\pm}.
		\end{equation}
		To state the proof in a unified way, we let ${\Phi}(t,x,v)\in C^1((0,+\infty)\times\Omega\times\R^3)$ be a test function. Taking the inner product of \eqref{40d} with ${\Phi}(t,x,v)$ with respect to $(x,v)$, we obtain 
		\begin{multline*}
			\partial_t({\partial^\alpha f_\pm},{\Phi})_{L^2_{x,v}}- ({\partial^\alpha f_\pm},\partial_t{\Phi})_{L^2_{x,v}}-({\partial^\alpha f_\pm},v\cdot{\nabla_{x}\Phi})_{L^2_{x,v}} 
			+\int_{\pa\Omega}(v\cdot n(x){\partial^\alpha f_\pm},{\Phi})_{L^2_v}\,dS(x)\\
			\pm (\partial^\alpha {\nabla_x\phi}\cdot v\mu^{1/2},{\Phi})_{L^2_{x,v}} - (L_\pm {\partial^\alpha f},{\Phi})_{L^2_{x,v}} = ({\partial^\alpha g_\pm},{\Phi})_{L^2_{x,v}}.
		\end{multline*}
		Using the decomposition ${f_\pm}=\P{f_\pm}+\{\I-\P\}{f_\pm}$, we rewrite the above equation to be  
		\begin{align}\label{100}
			\partial_t({\partial^\alpha f_\pm},{\Phi})_{L^2_{x,v}}-({\partial^\alpha \P_\pm f},v\cdot{\nabla_{x}\Phi})_{L^2_{x,v}}  = \sum_{j=1}^5S_j,
		\end{align}
		where $S_j$'s are defined by 
		\begin{align*}
			S_1 &= ({\partial^\alpha f_\pm},\partial_t{\Phi})_{L^2_{x,v}},\\
			S_2 &= ({\partial^\alpha (\II-\PP)f},v\cdot{\nabla_{x}\Phi})_{L^2_{x,v}} ,\\
			S_3&= (L_\pm {\partial^\alpha f},{\Phi})_{L^2_{x,v}}+({\partial^\alpha g_\pm},{\Phi})_{L^2_{x,v}},\\
			S_4 &= \mp(\partial^\alpha {\nabla_x\phi}\cdot v\mu^{1/2},{\Phi})_{L^2_{x,v}},\\
			S_5 &= -\int_{\pa\Omega}(v\cdot n(x){\partial^\alpha f_\pm},{\Phi})_{L^2_v}\,dS(x).
		\end{align*}

		\medskip \noindent{\bf Step 1. Estimate on ${c}(t,x)$:} In this step, we will let $|\alpha|\ge 1$. Choose test function 
		\begin{align*}
			{\Phi} = {\Phi_c} = (|v|^2-5)\big(v\cdot{\nabla_{x}\phi_c}(t,x)\big)\mu^{1/2},
		\end{align*}
		where $\phi_c$ solves 		
		\begin{equation}\label{120}\left\{\begin{aligned}
				&-\Delta_x \phi_c = {\partial^\alpha c}\ \text{ in }\Omega,\\
				&{\phi_c}(x)= 0 \ \text{ on }\ x\in \Gamma_i,\ \text{ if }\alpha_i = 1\text{ or }3,\\
				&\partial_n\phi_c(x)= 0\ \text{ on }\ x\in \Gamma_i,\ \text{ if }\alpha_i = 0\text{ or } 2.
			\end{aligned}\right.
		\end{equation}
		The existence of solution to \eqref{120} is given by \cite[Lamma 4.4.3.1]{Grisvard1985}. In particular, when $|\alpha|=2$, we deduce from \eqref{323} that $\alpha_i=2$ for some $i$ and $\alpha_k=0$ for $k\neq i$. Thus, \eqref{120} is pure Neumann problem and we need $\int_{\Omega}\partial_{x_ix_i}c\,dx=\int_{\Gamma_i}\partial_{x_i}c\,dS(x)=0$ from Lemma \ref{Lem25} to ensure the existence of \eqref{120}. 
		Similar to the proof for \eqref{2.14}, by using boundary value of $\phi_c$, we have 
		\begin{align}\label{3.8}
			\sum_{i,j=1}^3\|\partial_{x_ix_j}{\phi_c}\|_{L^2_{x}}^2 = \|\Delta_x\phi_c\|_{L^2_x}^2 \lesssim \|\partial^\alpha c\|^2_{L^2_x}. 
		\end{align}
		We discuss the value of $\alpha$ in the following cases. 
%
%
		If $|\alpha| = 1$, then $\alpha_i=1$ for some $1\le i\le 3$. Hence, $\phi_c(x)=0$ on $\Gamma_i$ and $\partial_n\phi_c(x)=0$ on $\Gamma_j$ for $j\neq i$. It follows that 
		\begin{align}\label{3.9}\notag
			\|\na_x\phi_c\|^2_{L^2_x} &= \sum_{j=1}^3\int_{\Gamma_j}\partial_{x_j}\phi_c\,\phi_c\,dx - \int_{\Omega}\Delta_x\phi_c\,\phi_c\,dx\\
			&= \int_{\Omega}\partial^\alpha c\,\phi_c\,dx \le \|\partial^\alpha c\|_{L^2_x}\|\phi_c\|_{L^2_x}.
		\end{align}
		Since $\phi_c =0$ on $\Gamma_i$, by Sobolev embedding \cite[Theorem 6.7-5]{Ciarlet2013}, we have $\|\phi_c\|_{L^2_x}\lesssim \|\na_x\phi_c\|_{L^2_x}$. Then from \eqref{3.9}, we have 
		\begin{align}\label{3.10}
			\|\na_x\phi_c\|_{L^2_x}\lesssim \|\partial^\alpha c\|_{L^2_x}\lesssim \sum_{|\alpha|=1}\|\partial^\alpha c\|_{L^2_x}.
		\end{align}
		Similarly, since derivative $\pa_t$ doesn't affect the boundary value for $\phi_c$, we have 
		\begin{equation}\label{3.11}
			\|\partial_t\na_x\phi_c\|_{L^2_x}\lesssim \sum_{|\alpha|=1}\|\partial_t\partial^\alpha c\|_{L^2_x}.
		\end{equation}
		
		If $|\alpha|=2$, at stated before, we only consider $\alpha_i=2$ for some $1\le i\le 3$. Then for this $i$, similar to \eqref{3.9}, by using boundary values $\partial_{x_i}c=0$ on $\Gamma_i$ from \eqref{Lem25}, we have 
		\begin{align*}
			\|\na_x\phi_c\|^2_{L^2_x}
			&= \int_{\Omega}\partial_{x_ix_i} c\,\phi_c\,dx = \int_{\Omega}\partial_{x_i}c\,\partial_{x_i}\phi_c\,dx
			\le \|\partial_{x_i}c\|_{L^2_x}\|\partial_{x_i}\phi_c\|_{L^2_x}.
		\end{align*}
		This implies that 
		\begin{align}\label{3.12}
			\|\na_x\phi_c\|_{L^2_x}\le \|\partial_{x_i}c\|_{L^2_x}.
		\end{align}
		Similarly, since $\pa_t$ doesn't affect the boundary value for $\phi_c$, we have 
		\begin{align}\label{3.13}
			\|\partial_t\na_x\phi_c\|_{L^2_x}\le \|\partial_t\partial_{x_i}c\|_{L^2_x}.
		\end{align}
		
		If $|\alpha|=3$, then there exists $1\le i\le 3$ such that $\alpha_i=1$ or $3$.
		Then the boundary value for $\phi_c$ gives that $\phi_c=0$ on $\Gamma_i$. 
		Denote $\pa^\alpha=\pa_{x_ix_jx_k}$ for some $1\le j,k\le 3$. 
		Then taking inner product of \eqref{120} with $\phi_c$, we have 
		\begin{align*}
			\|\na_x\phi_c\|^2_{L^2_x} &= \int_{\Omega}\pa^\alpha c\,\phi_c\,dx
			= \int_{\Gamma_i}\pa_{x_jx_k}c\, \phi_c\,dS(x)
			- \int_{\Omega}\pa_{x_jx_k}c\,\pa_{x_i}\phi_c\,dx\\
			&\lesssim \|\na_x^2 c\|_{L^2_x}\|\na_x\phi\|_{L^2_x}.
		\end{align*}
		Thus, 
		\begin{align}\label{4.28}
			\|\na_x\phi_c\|_{L^2_x}\lesssim \|\na_x^2 c\|_{L^2_x}. 
		\end{align}
		Similarly, 
		\begin{align}\label{4.29}
			\|\pa_t\na_x\phi_c\|_{L^2_x}\lesssim \|\pa_t\na_x^2 c\|_{L^2_x}. 
		\end{align}

		Now we can compute \eqref{100}. For the second term on left hand side of \eqref{100}, we have 
		\begin{align*}
			&\quad\,-({\partial^\alpha \PP f},v\cdot{\nabla_{x}\Phi_{c}})_{L^2_{x,v}} \\
			&= -\sum_{j,m=1}^3\big(({\partial^\alpha a_\pm}+{\partial^\alpha b}\cdot v+(|v|^2-3){\partial^\alpha c} )\mu^{1/2},v_jv_m(|v|^2-5)\mu^{1/2}\partial_{x_j}\partial_{x_m}\phi_{c})_{L^2_{x,v}} \\
			&= 10\sum_{j=1}^3({\partial^\alpha c} ,{-\partial^2_j\phi_{c}}\big)_{L^2_{x,v}}  = 10\|{\partial^\alpha c}\|^2_{L^2_{x}} .
		\end{align*}
		Note that $\int_{\R^3}|v|^4v_j^2\mu\,dv = 35$, $\int_{\R^3}|v|^2v_j^2\mu\,dv = 5$ and $\int_{\R^3}v_j^2\mu\,dv = 1$. 
		For $S_1$, we obtain from \eqref{3.11}, \eqref{3.13} and \eqref{4.29} that for any $\eta>0$, 
		\begin{align*}
			|S_1|&\le|({\partial^\alpha f},\partial_t{\Phi_c})_{L^2_{x,v}}| = |(\{\I-\P\}{\partial^\alpha f},\partial_t{\Phi_c})_{L^2_{x,v}}|\\
			&\lesssim \eta\|\partial_t{\nabla_x\phi_c}\|^2_{L^2_{x}}+C_\eta\|\{\I-\P\}{\partial^\alpha f}\|^2_{L^2_{x}L^2_{\gamma/2}}\\
			&\lesssim \eta
			\sum_{1\le|\alpha|\le K}\|\partial^\alpha b\|^2_{L^2_{x}}+\eta\sum_{1\le|\alpha|\le K} \|(\partial^\alpha g,\zeta)_{L^2_v}\|_{L^2_x}^2+C_\eta\sum_{|\alpha|\le K}\|\{\I-\P\}{\partial^\alpha f}\|^2_{L^2_{x}L^2_{\gamma/2}}.
		\end{align*}where we used \eqref{19}$_3$ in the last inequality. 
		By \eqref{3.8}, $S_2$ can be estimated as 
		\begin{align*}
			|S_2|\lesssim \eta\sum_{i,j=1}^3\|\partial_{x_ix_j}\phi_c\|_{L^2_x}^2+C_\eta\|\partial^\alpha\{\I-\P\}f\|^2_{L^2_xL^2_{\gamma/2}}\lesssim \eta\|{\partial^\alpha c}\|^2_{L^2_{x}}+C_\eta\|\partial^\alpha \{\I-\P\}{f}\|^2_{L^2_{x}L^2_{\gamma/2}},
		\end{align*}
	for any $\eta>0$. 
		For $S_3$, applying \eqref{3.10}, \eqref{3.12} and \eqref{4.28}, we have 
		\begin{align*}
			|S_3|\le \eta\sum_{|\alpha|\le K}\|{\partial^\alpha c}\|^2_{L^2_{x}}+C_\eta\|\{\I-\P\}{\partial^\alpha f}\|^2_{L^2_{x}L^2_{\gamma/2}}+C_\eta\|({\partial^\alpha g},\zeta)_{L^2_v}\|^2_{L^2_{x}}.
		\end{align*}
		For $S_4$, we obtain from \eqref{3.10}, \eqref{3.12} and \eqref{4.28} that 
		\begin{align*}
			|S_4|\lesssim C_\eta\|\pa^\alpha \na_x\phi\|_{L^2_x}^2 + \eta\sum_{|\alpha|\le K}\|\partial^\alpha c\|_{L^2_x}^2. 
		\end{align*}
		For $S_5$, we need to use the boundary condition from Lemma \ref{Lem24} and \eqref{38}: 
		\begin{align*}
			S_5 &= -\int_{\partial\Omega}(v\cdot n(x){\partial^\alpha f}(x),{\Phi_c}(x))_{L^2_v}\,dS(x).
		\end{align*}
		Divide the integral on $\partial\Omega$ into three parts, $\Gamma_i$ $(i=1,2,3)$, and consider each component $\Gamma_i$ separately. Fix $i=1,2,3$. Then on $\Gamma_i$, we have $\partial_{n} = \partial_{x_i}$ or $-\pa_{x_i}$. 
Then 
		\begin{multline}
			\int_{\Gamma_i}(v\cdot n(x){\partial^\alpha f}(x),{\Phi_c}(x))_{L^2_v}\,dS(x)\\
			= \int_{\Gamma_i}\int_{\R^3}v\cdot n(x){\partial^\alpha f}(t,x,v)(|v|^2-5)\big(v\cdot{\nabla_{x}\phi_c}(t,x)\big)\mu^{1/2}\,dvdS(x).\label{20}
		\end{multline}
		If $\alpha_i=0$ or $2$, then we deduce from  \eqref{33}, \eqref{34} and \eqref{37} that $\pa^\al f(R_xv)=\pa^\al f(v)$ and from \eqref{120} that $\partial_{x_i}\phi_c=0$. Applying change of variable $v\mapsto R_xv$, \eqref{20} becomes 
		\begin{align*}
			&\quad\,\int_{\Gamma_i}\int_{\R^3}v\cdot n(x){\partial^\alpha f}(t,x,v)(|v|^2-5)\sum_{j\neq i}\big(v_j\pa_{x_j}\phi_c(t,x)\big)\mu^{1/2}\,dvdS(x)\\
			&=\int_{\Gamma_i}\int_{\R^3}R_xv\cdot n(x){\partial^\alpha f}(t,x,R_xv)(|R_xv|^2-5)\sum_{j\neq i}\big(R_xv_j\pa_{x_j}\phi_c(t,x)\big)\mu^{1/2}\,dvdS(x)\\
			&=\int_{\Gamma_i}\int_{\R^3}(-v\cdot n(x)){\partial^\alpha f}(t,x,v)(|v|^2-5)\sum_{j\neq i}\big(v_j\pa_{x_j}\phi_c(t,x)\big)\mu^{1/2}\,dvdS(x) = 0.
		\end{align*}
		
		If $\alpha_i = 1$ or $3$, then from boundary conditions, we have $\pa^\al f(R_xv)=-\pa^\al f(v)$ and $\partial_{x_j}\phi_c=0$ on $\Gamma_i$ for any $j\neq i$. Applying change of variable $v\mapsto R_xv$ to \eqref{20} and using \eqref{35}, we obtain 
		\begin{align*}
			&\quad\,\int_{\Gamma_i}\int_{\R^3}v\cdot n(x){\partial^\alpha f}(t,x,v)(|v|^2-5)v_i\pa_{x_i}\phi_c(t,x)\mu^{1/2}\,dvdS(x)\\
			&=\int_{\Gamma_i}\int_{\R^3}R_xv\cdot n(x){\partial^\alpha f}(t,x,R_xv)(|R_xv|^2-5)R_xv_i\pa_{x_i}\phi_c(t,x)\mu^{1/2}\,dvdS(x)\\
			&=\int_{\Gamma_i}\int_{\R^3}v\cdot n(x){\partial^\alpha f}(t,x,v)(|v|^2-5)(-v_i)\pa_{x_i}\phi_c(t,x)\mu^{1/2}\,dvdS(x) = 0. 
		\end{align*}
		Since the above estimates are valid for $i=1,2,3$, we obtain 
		\begin{align}\label{s5}
			S_5= 0. 
		\end{align}
		Combining the above estimates for $S_j$ $(1\le j\le 5)$, taking summation over $1\le|\alpha|\le K$ and letting $\eta$ suitably small, we obtain
		\begin{multline}\label{122ab}
			\partial_t\sum_{1\le|\alpha|\le K}(\partial^\alpha f,\Phi_c)_{L^2_{x,v}} + \lambda\sum_{1\le|\alpha|\le K}\|{\partial^\alpha c}\|^2_{L^2_{x}} 
			\lesssim \eta \sum_{1\le|\alpha|\le K}\|{\partial^\alpha b}\|^2_{L^2_{x}}+\sum_{|\alpha|\le K}\|\pa^\alpha \na_x\phi\|_{L^2_x}^2\\
			+C_\eta\sum_{|\alpha|\le K}\|\{\I-\P\}{\partial^\alpha f}\|^2_{L^2_{x}L^2_{\gamma/2}} 
			+C_\eta\sum_{|\alpha|\le K}\|({\partial^\alpha g},\zeta)_{L^2_v}\|^2_{L^2_{x}},
		\end{multline}for some $\lambda>0$ and any $\eta>0$. Note that we have applied \eqref{2.14}. 
	The estimate \eqref{122ab} gives derivatives estimate on $c$. For the zeroth derivative of $c$, we apply the Poincar\'{e}'s inequality and \eqref{conservation_bounded} to obtain that 
	\begin{align*}
		\|c\|_{L^2_x}\lesssim \|\na_xc\|_{L^2_x} + \Big|\int_\Omega c\,dx\Big|
		&\lesssim \|\na_xc\|_{L^2_x} + \|E\|_{L^2_x}^2.
	\end{align*}
	Plugging this estimate into \eqref{122ab}, we have 	
	\begin{multline}\label{122a}
		\partial_t\sum_{1\le|\alpha|\le K}(\partial^\alpha f,\Phi_c)_{L^2_{x,v}} + \lambda\sum_{|\alpha|\le K}\|{\partial^\alpha c}\|^2_{L^2_{x}} 
		\lesssim \eta \sum_{1\le|\alpha|\le K}\|{\partial^\alpha b}\|^2_{L^2_{x}}+\sum_{|\alpha|\le K}\|\pa^\alpha \na_x\phi\|_{L^2_x}^2\\
		+C_\eta\sum_{|\alpha|\le K}\|\{\I-\P\}{\partial^\alpha f}\|^2_{L^2_{x}L^2_{\gamma/2}} 
		+C_\eta\sum_{|\alpha|\le K}\|({\partial^\alpha g},\zeta)_{L^2_v}\|^2_{L^2_{x}}+ \|E\|_{L^2_x}^4,
	\end{multline}

		\medskip \noindent{\bf Step 2. Estimate of ${b}(t,x)$.}
		Next we consider the estimate of ${b}$. For this purpose we choose 
		\begin{align*}
			{\Phi}={\Phi_b}=\sum^3_{m=1}{\Phi^{j,m}_b},\ j=1,2,3,
		\end{align*}
		where 
		\begin{equation*}
			{\Phi^{j,m}_b}=\left\{\begin{aligned}
				\big(|v|^2v_mv_j{\partial_{x_m}\phi_j}-\frac{7}{2}(v_m^2-1){\partial_{x_j}\phi_j}\big)\mu^{1/2},\ m\neq j,\\
				\frac{7}{2}(v_j^2-1){\partial_{x_j}\phi_j}\mu^{1/2},\qquad\qquad\qquad m=j,
			\end{aligned}\right.
		\end{equation*}
		and $\phi_j$($1\le j\le 3$) solves  
		\begin{equation}\label{120a}\left\{\begin{aligned}
				&-\Delta_x \phi_j = {\partial^\alpha b_j}\ \text{ in }\Omega,\\
				&{\phi_k}(x) =\partial_n\phi_m(x)= 0 \ \text{ on }\ x\in \Gamma_m,\ \text{ for }k\neq m, \text{ if }\alpha_m = 1\text{ or }3,\\
				&\phi_m(x)={\partial_n\phi_k}(x)= 0\ \text{ on }\ x\in \Gamma_m,\ \text{ for }k\neq m, \text{ if }\alpha_m = 0\text{ or } 2,\ \forall\, m=1,2,3.
			\end{aligned}\right.
		\end{equation}
		The existence of solutions to \eqref{120a} is given by \cite[Lamma 4.4.3.1]{Grisvard1985}. We will explain the sufficient and necessary conditions for existence to pure Neumann type problem later. 
		By using the boundary value of $\phi_j$, similar to \eqref{3.8}, we have 
		\begin{equation}\label{3.18}\begin{aligned}
				\sum_{i,k=1}^3\|\partial_{x_ix_k}{\phi_j}\|_{L^2_{x}}^2& = \|\Delta_x\phi_j\|_{L^2_x}^2 \lesssim \|\partial^\alpha b_j\|^2_{L^2_x}.
			\end{aligned} 	
		\end{equation}
		Then $S_2$ can be estimated as 
		\begin{align}\label{3.19a}
			|S_2|\lesssim \|\partial^\alpha\{\I-\P\}f\|_{L^2_xL^2_{\gamma/2}}\sum_{i,j,m=1}^3\|\partial_{x_ix_m}\phi_j\|_{L^2_x}
			\lesssim C_\eta\|\partial^\alpha\{\I-\P\}f\|_{L^2_xL^2_{\gamma/2}}+\eta \|\partial^\alpha b\|^2_{L^2_x}.
		\end{align}
		Next we fix $1\le j\le 3$ and discuss the value of $|\alpha|$ in the following cases. 
		If $|\alpha|=0$, then $\eqref{120a}$ is mixed Neumann-Dirichlet boundary problem. Then by standard elliptic estimates, we have 
		\begin{align}\label{5.21c}
			\|\na_x\phi_j\|_{L^2_x}\lesssim \|b_j\|_{L^2_x},\qquad
			\|\partial_t\na_x\phi_j\|_{L^2_x}\lesssim \|\partial_t b_j\|_{L^2_x},
		\end{align}

		If  $|\alpha|=1$. Then $\alpha_i=1$ for some $1\le i\le 3$ and $\alpha_k=0$ for $k\neq i$.
		In particular, if $j=i$, then $\partial_{x_i}\phi_i=0$ on $\Gamma_i$ and $\partial_{x_k}\phi_i=0$ on $\Gamma_k$ for $k\neq i$. In this case, \eqref{120a} is a pure Neumann boundary problem and we need $\int_{\Omega}\partial_{x_i}b_i\,dx=\int_{\Gamma_i}b_i\,dS(x)=0$ to ensure the existence for \eqref{120a}, which follows from \eqref{Lem25a}. In this case, $\pa_{x_m}\phi_i=0$ on a subset of boundary $\partial\Omega$ with non-zero spherical measure for any $m=1,2,3$. 
		By Sobolev embedding \cite[Theorem 6.7-5]{Ciarlet2013}, we have from \eqref{3.18} that 
		\begin{align}\label{3.21}
			\|\partial_{x_m}\phi_i\|_{L^2_x}\lesssim \|\na_x\partial_{x_m}\phi_i\|_{L^2_x}\lesssim \|\partial^\alpha b_i\|_{L^2_x},
		\end{align}
	 and 
		\begin{align}\label{3.21a}
			\|\partial_t\partial_{x_m}\phi_i\|_{L^2_x}\lesssim \|\partial_t\na_x\partial_{x_m}\phi_i\|_{L^2_x}
			\lesssim \|\partial_t\partial^\alpha b_i\|_{L^2_x},
		\end{align}
	for any $m=1,2,3$.

		If $j\neq i$, then $\phi_j=0$ on $\Gamma_i$ and $\Gamma_j$ while $\partial_{x_k}\phi_j=0$ on $\Gamma_k$ for $k\neq j,i$. \eqref{120a} is a mixed Dirichlet-Neumann boundary problem. By Sobolev embedding \cite[Theorem 6.7-5]{Ciarlet2013}, we have $\|\partial_t\phi_j\|_{L^2_x}\lesssim \|\partial_t\na_x\phi_j\|_{L^2_x}$ and $\|\phi_j\|_{L^2_x}\lesssim \|\na_x\phi_j\|_{L^2_x}$. Thus, by standard elliptic estimates for \eqref{120a}, we have 
		\begin{align}\label{3.21c}
			\|\na_x\phi_j\|_{L^2_x}\lesssim \|\partial^\alpha b_j\|_{L^2_x},\qquad
			\|\partial_t\na_x\phi_j\|_{L^2_x}\lesssim \|\partial_t\partial^\alpha b_j\|_{L^2_x}.
		\end{align}

		Next we assume $|\alpha|=2$ and $\partial^\alpha = \partial_{x_ix_i}$ for some $1\le i\le 3$.
		Then for $j=1,2,3$, $\phi_j=0$ on $\Gamma_j$ and $\partial_{x_k}\phi_j=0$ on $\Gamma_k$ for $k\neq j$.
		Thus \eqref{120a} is a mixed Dirichlet-Neumann boundary problem and by Sobolev embedding \cite[Theorem 6.7-5]{Ciarlet2013}, we know that $\|\phi_j\|_{L^2_x}\lesssim \|\nabla_x\phi_j\|_{L^2_x}$. Then by standard elliptic estimates for \eqref{120a}, we have 
		\begin{align*}
			\|\na_x\phi_j\|_{L^2_x}^2= \int_\Omega\partial_{x_ix_i}b_j\,\phi_j\,dx
			=  - \int_\Omega\partial_{x_i}b_j\,\partial_{x_i}\phi_j\,dx,
		\end{align*}
		where we used $\pa_{x_i}b_j=0$ on $\Gamma_i$ from \eqref{Lem25a} for $j\neq i$ and $\phi_j=0$ on $\Gamma_i$ if $j=i$. Then we have 
		\begin{align}\label{3.27}
			\|\na_x\phi_j\|_{L^2_x}\le \|\partial_{x_i}b_j\|_{L^2_x},\qquad
			\|\partial_t\na_x\phi_j\|_{L^2_x}\lesssim\|\partial_t\partial_{x_i}b_j\|_{L^2_x}. 
		\end{align}
		
		If $|\alpha|=3$, then $\alpha_i=1$ or $3$ for some $1\le i\le 3$. If further $\alpha_k=2$ or $0$ and $\alpha_m=0$ for some $k\neq i$ and $m\neq k,i$, then \eqref{120a} is pure Neumann problem when $i=j$. Here we need  $\int_{\Omega}\pa_{x_ix_kx_k}b_i\,dx=\int_{\Gamma_k}\pa_{x_ix_k}b_i\,dS(x)=0$
		or  
		$\int_{\Omega}\pa_{x_ix_ix_i}b_i\,dx=\int_{\Gamma_i}\pa_{x_ix_i}b_i\,dS(x)=0$
		to ensure the existence of \eqref{120a}, which follows from \eqref{Lem25a} and \eqref{38a}. In any cases, we write $\pa^\alpha = \pa_{x_ix_kx_m}$ for some $1\le k,m\le 3$. Here either $k=m=i$ or $k,m\neq i$. Then taking inner product of \eqref{120a} with $\phi_j$, we have 
		\begin{multline*}
			\|\na_x\phi_j\|^2_{L^2_x} = \int_{\Omega}\pa_{x_ix_kx_m}b_j\,\phi_j\,dx
			= \int_{\Gamma_i}\pa_{x_kx_m}b_j\,\phi_j\,dx - \int_{\Omega}\pa_{x_kx_m}b_j\,\pa_{x_i}\phi_j\,dx\\
			\le \|\pa_{x_kx_m}b_j\|_{L^2_x}\|\pa_{x_i}\phi_j\|_{L^2_x},
		\end{multline*}
		where we used the fact that $\pa_{x_kx_m}b_j=0$ on $\Gamma_i$ when $i=j$ and $\phi_j=0$ on $\Gamma_i$ when $i\neq j$, which is from \eqref{Lem25a} and boundary condition \eqref{120a}. 
		This implies that 
		\begin{align}\label{4.42}
			\|\na_x\phi_j\|_{L^2_x}\le \sum_{|\alpha|=2}\|\pa^\alpha b_j\|_{L^2_x},\qquad 
			\|\pa_t\na_x\phi_j\|_{L^2_x}\le \sum_{|\alpha|=2}\|\pa_t\pa^\alpha b_j\|_{L^2_x}.
		\end{align}
		As a summary, for $|\al|\le K$, we have from \eqref{5.21c}, \eqref{3.21}, \eqref{3.21a}, \eqref{3.21c}, \eqref{3.27} and \eqref{4.42} that 
		\begin{align}\label{b1}
			\|\na_x\phi_j\|_{L^2_x}\le \sum_{|\alpha|\le K-1}\|\pa^\alpha b_j\|_{L^2_x},\quad
			\|\pa_t\na_x\phi_j\|_{L^2_x}\le \sum_{|\alpha|\le K-1}\|\pa_t\pa^\alpha b_j\|_{L^2_x}.
		\end{align}

		Now we let $|\alpha|\le K$. 
		For $S_1$, we have from \eqref{b1} that 
		\begin{align}\label{4.31}\notag
			\dis|S_1| &\le  \big(\P \partial^\alpha f,\partial_t\Phi_b\big)_{L^2_{x,v}}+\big(\{\I-\P\} \partial^\alpha f,\partial_t\Phi_b\big)_{L^2_{x,v}}\\
			&\notag\lesssim C_\eta\|\partial^\alpha c\|_{L^2_x}^2 + C_\eta\|\{\I-\P\} \partial^\alpha f\|_{L^2_xL^2_{\gamma/2}}^2 + \eta\sum_{|\alpha|\le K-1}\|\partial_t\pa^\alpha b_j\|_{L^2_x}^2\\
			&\notag\lesssim  \sum_{|\alpha|\le K}\|\partial^\alpha c\|^2_{L^2_x} +C_\eta\sum_{|\alpha|\le K}\|\partial^\alpha \{\I-\P\}f\|^2_{L^2_xL^2_{\gamma/2}} \\&\qquad+\eta\sum_{1\le|\alpha|\le K}\|\partial^\alpha (a_++a_-)\|^2_{L^2_x}+\eta\sum_{|\alpha|\le K}\|(\partial^\alpha g,\zeta)_{L^2_v}\|^2_{L^2_x},
		\end{align}
		where we used \eqref{19}$_2$.
		For $S_3$, by \eqref{b1}, we have 
		\begin{align}\label{3.22a}\notag
			|S_3|
			&\lesssim\notag C_\eta\|\partial^\alpha \{\I-\P\}f\|^2_{L^2_xL^2_{\gamma/2}} +C_\eta\|(\partial^\alpha g,\zeta)_{L^2_v}\|^2_{L^2_x}+ \eta\|\nabla_x\phi_j\|_{L^2_x}\\
			&\lesssim C_\eta\|\partial^\alpha \{\I-\P\}f\|_{L^2_xL^2_{\gamma/2}}^2 + C_\eta\|(\pa^\alpha g,\zeta(v))_{L^2_v}\|_{L^2_x}^2 + \eta\sum_{|\alpha|\le K-1}\|\partial^\alpha b\|_{L^2_x},
		\end{align}
	for any $\eta>0$. 
		For $S_4$, we apply \eqref{b1} to obtain 
		\begin{align}\label{3.28a}
			|S_4|\le C_\eta \|\partial^\alpha\na_x\phi\|^2_{L^2_x}+\eta\sum_{|\alpha|\le K-1}\|\partial^\alpha b\|_{L^2_x},
		\end{align}
	for any $\eta>0$. 
		For the second term on left hand side of \eqref{100}, we have  
		\begin{align}\label{3.30}
			&\notag\quad\,-\sum^3_{m=1}(\PP {\partial^\alpha f},v\cdot\na_x{\Phi^{j,m}_b})_{L^2_{x,v}}\\
			&\notag=-\sum^{3}_{m=1,m\neq j}(v_mv_j\mu^{1/2}{\partial^\alpha b_j},|v|^2v_mv_j\mu^{1/2}{\partial_{x_m}^2\phi_j})_{L^2_{x,v}}\\
			&\notag\qquad-\sum^{3}_{m=1,m\neq j}(v_mv_j\mu^{1/2}{\partial^\alpha b_m},|v|^2v_mv_j\mu^{1/2}{\partial_{x_m}\partial_{x_j}\phi_j})_{L^2_{x,v}}\\
			&\notag\qquad+7\sum^{3}_{m=1,m\neq j}({\partial^\alpha b_m},{\partial_{x_m}\partial_{x_j}\phi_j})_{L^2_{x}}-7({\partial^\alpha b_m},{\partial^2_{x_j}\phi_j})_{L^2_{x}}\\
			&= -7 \sum^3_{m=1}({\partial^\alpha b_j},{\partial_{x_m}^2\phi_j})_{L^2_{x}}=7\|{\partial^\alpha b_j}\|^2_{L^2_{x}}.
		\end{align}Note that $\int_{\R^3}v_m^2(v_m^2-1)\mu\,dv=2$, $\int_{\R^3}v_m^2(v_j^2-1)\mu\,dv=0$, $\int_{\R^3}v_m^2v_j^2|v|^2\mu\,dv=7$ and $\int_{\R^3}(v_j^2-1)\mu\,dv=0$, when $m\neq j$. 
		%
		%
		%
		%
		Now we consider the boundary term $S_5$. As in the estimate on $c(t,x)$, we consider $\Gamma_i$ for fixed $i=1,2,3$: 
		\begin{multline}\label{119}
			\int_{\Gamma_i}(v\cdot n(x){\partial^\alpha f}(x),{\Phi_b}(x))_{L^2_v}\,dS(x)\\
			= \sum_{m=1}^3\int_{\Gamma_i}\int_{\R^3}v\cdot n(x){\partial^\alpha f}(t,x,v){\Phi^{j,m}_b}(x,v)\,dvdS(x).
		\end{multline}
		If $\alpha_i=0$ or $2$, then applying boundary condition \eqref{120a}, we have that for $x\in\Gamma_i$, 
		\begin{align*}
			\partial_{x_i}\phi_j(x) = \partial_{x_j}\phi_i(x) = 0,\quad\text{ for }j\neq i. 
		\end{align*}
		This shows that $\Phi^{j,m}_b(x,v)$ is even with respect to $v_i$ when $x\in\Gamma_i$. 
		Noticing $R_xv = v-2v\cdot e_je_j$ maps $v_i$ to $-v_i$ on $\Gamma_i$, we know that 
		\begin{align*}
			{\Phi^{j,m}_b}(x,R_xv) = {\Phi^{j,m}_b}(x,v),\ \text{ for }m=1,2,3.
		\end{align*} 		
		Applying change of variable $v\mapsto R_xv$ and using identities \eqref{33}, \eqref{34} and \eqref{37}, \eqref{119} becomes 
		\begin{align*}
			&\quad\, \sum_{m=1}^3\int_{\Gamma_i}\int_{\R^3}v\cdot n(x){\partial^\alpha f}(t,x,v){\Phi^{j,m}_b}(x,v)\,dvdS(x)\\
			&=\sum_{m=1}^3\int_{\Gamma_i}\int_{\R^3}R_xv\cdot n(x){\partial^\alpha f}(t,x,R_xv){\Phi^{j,m}_b}(x,R_xv)\,dvdS(x)\\
			&=\sum_{m=1}^3\int_{\Gamma_i}\int_{\R^3}-v\cdot n(x){\partial^\alpha f}(t,x,v){\Phi^{j,m}_b}(x,v)\,dvdS(x)=0.
		\end{align*}
		If $\alpha_i=1$ or $3$, then boundary condition \eqref{120a} shows that on $x\in\Gamma_i$, 
		\begin{align*}
			\partial_{x_j}\phi_j(x) &= 0, \quad \text{ for }j=1,2,3,\\
			\partial_{x_m}\phi_j(x) &= 0,\quad \text{ for } j,m\neq i.
		\end{align*}
		Note that $\pa_{x_m}$ is tangent derivative on $\Gamma_i$ when $m\neq i$. 
		Then we know that $\Phi^{j,m}_b(x,v)$ is odd with respect to $v_i$ when $x\in\Gamma_i$ and hence, 
		\begin{align*}
			{\Phi^{j,m}_b}(x,R_xv) = -{\Phi^{j,m}_b}(x,v).
		\end{align*}
		Now applying change of variable $v\mapsto R_xv$ and using identities \eqref{35}, \eqref{119} becomes 
		\begin{align*}
			&\quad\, \sum_{m=1}^3\int_{\Gamma_i}\int_{\R^3}v\cdot n(x){\partial^\alpha f}(t,x,v){\Phi^{j,m}_b}(x,v)\,dvdS(x)\\
			&=\sum_{m=1}^3\int_{\Gamma_i}\int_{\R^3}R_xv\cdot n(x){\partial^\alpha f}(t,x,R_xv){\Phi^{j,m}_b}(x,R_xv)\,dvdS(x)\\
			&=\sum_{m=1}^3\int_{\Gamma_i}\int_{\R^3}v\cdot n(x){\partial^\alpha f}(t,x,v){(-\Phi^{j,m}_b)}(x,v)\,dvdS(x)=0.
		\end{align*}
		Therefore, \begin{equation}\label{3.31}
			S_5=0.
		\end{equation} Combining estimates \eqref{3.19a}, \eqref{4.31}, \eqref{3.22a}, \eqref{3.28a}, \eqref{3.30} and \eqref{3.31}, taking summation over $|\alpha|\le K$ of \eqref{100} and letting $\eta$ sufficiently small, we have 
		\begin{multline}\label{122b}
			\partial_t\sum_{|\alpha|\le K}(\partial^\alpha f,\Phi_b)_{L^2_{x,v}} + \lambda\sum_{|\alpha|\le K}\|{\partial^\alpha b}\|^2_{L^2_{x}}\lesssim \eta\sum_{1\le|\alpha|\le K}\|\partial^\alpha (a_++a_-)\|^2_{L^2_{x}}+ C_\eta\sum_{|\alpha|\le K}\|\pa^\alpha{c}\|^2_{L^2_{x}}\\+\sum_{|\alpha|\le K}\|\partial^\alpha\na_x\phi\|^2_{L^2_x}
			+C_\eta\sum_{|\alpha|\le K}\|\pa^\alpha\{\I-\P\}{f}\|^2_{L^2_{x}L^2_{\gamma/2}}  +C_\eta\sum_{|\alpha|\le K}\|({\partial^\alpha g},\zeta)_{L^2_v}\|^2_{L^2_{x}},
		\end{multline}for some $\lambda>0$ and any $\eta>0$. Note that we have applied \eqref{2.14}.

		\medskip\noindent{\bf Step 3. Estimate on ${a_+}(t,x)+a_-(t,x)$:} We choose the following two test functions
		\begin{align*}
			{\Phi} = {\Phi_{a\pm}} = (|v|^2-10)\big(v\cdot{\nabla_{x}\phi_{a\pm}}(t,x)\big)\mu^{1/2},
		\end{align*}
		where $\phi_a = (\phi_{a+}(x),\phi_{a_-}(x))$ solves 
		\begin{equation}\label{122}\left\{\begin{aligned}
				&-\Delta_x \phi_{a+} =-\Delta_x \phi_{a-} = {\partial^\alpha (a_++a_-)}\ \text{ in }\Omega,\\
				&{\phi_a}(x)= 0 \ \text{ on }\ x\in \Gamma_i,\ \text{ if }\alpha_i = 1\text{ or }3,\\
				&\frac{\partial\phi_a}{\partial n}(x)= 0\ \text{ on }\ x\in \Gamma_i,\ \text{ if }\alpha_i = 0\text{ or } 2.
			\end{aligned}\right.
		\end{equation}
		%
		The existence and uniqueness of solution to \eqref{122} is guaranteed by \cite[Lamma 4.4.3.1]{Grisvard1985}.
		When $|\alpha|=0$, \eqref{122} is pure Neumann problem and we need $\int_{\Omega}(a_++a_-)\,dx=0$ from conservation laws \eqref{conservation_bounded} to ensure the existence of \eqref{122}. 
		When $|\al|=2$ and $\alpha_i=2$ for some $i$, \eqref{122} is pure Neumann problem and we need $\int_{\Omega}\partial_{x_ix_i}(a_++a_-)\,dx=\int_{\Gamma_i}\partial_{x_i}(a_++a_-)\,dS(x)=0$ from Lemma \ref{Lem25} to ensure the existence of \eqref{122}. 
		Now we compute \eqref{100}. For the second term on left hand side of \eqref{100}, taking summation on $\pm$, we have 
		\begin{align*}
			&\quad\,-\sum_{\pm}({\partial^\alpha \PP f},v\cdot{\nabla_{x}\Phi_{a\pm}})_{L^2_{x,v}} \\
			&= -\sum_{\pm}\sum_{j,m=1}^3({\partial^\alpha a_\pm}+{\partial^\alpha b}\cdot v+(|v|^2-3){\partial^\alpha c} ,v_jv_m(|v|^2-10)\mu{(\partial_{x_j}\partial_{x_m}\phi_{a\pm})^\wedge})_{L^2_{x,v}} \\
			&= \sum_{\pm}\sum_{j=1}^3({\partial^\alpha a_\pm} ,{-\partial^2_j\phi_{a\pm}})_{L^2_{x}}  = \|{\partial^\alpha a_+}+{\partial^\alpha a_-}\|^2_{L^2_{x}} .
		\end{align*}
		The estimates for $S_j$ $(1\le j\le 5)$ are similar to the case of $c(t,x)$ from \eqref{3.8} to \eqref{122a}, since $\Phi_a$ and $\Phi_c$ has similar structure. Then following the calculation from \eqref{3.8}
		to \eqref{4.29}, we have that for $|\alpha|\le K$, 
		\begin{equation}\label{3.32}
			\sum_{i,j=1}^3\|\partial_{x_ix_j}\phi_a\|^2_{L^2_x}=\|\Delta_x\phi_a\|^2_{L^2_x}\lesssim \|\partial^\alpha (a_++a_-)\|^2_{L^2_x},
		\end{equation}
		\begin{equation}\label{3.33}
			\|\nabla_x\phi_a\|_{L^2_x}\lesssim \sum_{|\alpha|\le K-1}\|\pa^\alpha(a_++a_-)\|_{L^2_x},
		\end{equation}
		and 
		\begin{equation}\label{3.34}
			\|\pa_t\nabla_x\phi_a\|_{L^2_x}\lesssim \sum_{|\alpha|\le K-1} \|\pa_t\pa^\alpha(a_++a_-)\|_{L^2_x}\lesssim \sum_{1\le|\alpha|\le K}\|\pa^\alpha b\|_{L^2_x},
		\end{equation}
		where the last inequality follows from \eqref{19}$_1$. 
		Then for $S_1$, we apply \eqref{3.34} to obtain 
		\begin{align*}
			|S_1|&\le \big|\big(\{\I-\P\}\partial^\alpha f, \pa_t\Phi_a\big)_{L^2_{x,v}}\big|
			+\big|\big(\P\partial^\alpha f, \pa_t\Phi_a\big)_{L^2_{x,v}}\big|\\
			&\lesssim \|\{\I-\P\}\partial^\alpha f\|^2_{L^2_xL^2_{\gamma/2}} + \|\partial^\alpha b\|_{L^2_x}^2 + \|\pa_t\na_x\phi_a\|^2_{L^2_x}\\
			&\lesssim \|\{\I-\P\}\partial^\alpha f\|^2_{L^2_xL^2_{\gamma/2}} + \sum_{|\alpha|\le K}\|\partial^\alpha b\|_{L^2_x}^2. 
		\end{align*}	
		For $S_2$, by \eqref{3.32}, we have 
		\begin{align*}
			|S_2|\lesssim C_\eta\|\{\I-\P\}\partial^\alpha f\|^2_{L^2_xL^2_{\gamma/2}} + \eta\|\partial^\alpha (a_++a_-)\|^2_{L^2_x}. 
		\end{align*}
		For $S_3$, by \eqref{3.33}, we have 
		\begin{align*}
			|S_3|\lesssim C_\eta \|\{\I-\P\}\partial^\alpha f\|^2_{L^2_xL^2_{\gamma/2}} + C_\eta \|(\partial^\alpha g,\zeta)_{L^2_v}\|^2_{L^2_x} + \eta\sum_{|\alpha|\le K-1} \|\pa^\alpha(a_++a_-)\|_{L^2_x}^2. 
		\end{align*}
		For $S_4$, we apply \eqref{3.33} to obtain 
		\begin{align*}
			|S_4|\le C_\eta \|\partial^\alpha\na_x\phi\|^2_{L^2_x}+\eta\sum_{|\alpha|\le K-1}\|\partial^\alpha (a_++a_-)\|_{L^2_x}.
		\end{align*}
		For $S_5$, since $\Phi_a$ and $\Phi_c$ has the same structure, following the arguments deriving \eqref{s5}, we have 
			$S_5=0.$ 
		Combining the above estimates, taking summation $|\alpha|\le K$ and $\pm$ of \eqref{100} and letting $\eta>0$ small enough, we have 
		\begin{multline}\label{122d}
			\partial_t\sum_{|\alpha|\le K}(\partial^\alpha f,\Phi_a)_{L^2_{x,v}} + \lambda\sum_{|\alpha|\le K}\|{\partial^\alpha (a_++a_-)}\|^2_{L^2_{x}}
			\lesssim\sum_{|\alpha|\le K} \|\partial^\alpha\{\I-\P\}{f}\|^2_{L^2_{x}L^2_{\gamma/2}}\\
			+\sum_{|\alpha|\le K}\|\pa^\alpha\na_x\phi\|^2_{L^2_x}+\sum_{|\alpha|\le K}\|{\partial^\alpha b}\|^2_{L^2_{x}} +\sum_{|\alpha|\le K}\|({\partial^\alpha g},\zeta)_{L^2_v}\|^2_{L^2_{x}}.
		\end{multline}
		Note that we have applied \eqref{2.14}.

		\medskip\noindent{\bf Step 4. Estimate on ${a_+}(t,x)-{a_-}(t,x)$ and ${E}(t,x)$:} We choose the following two test functions
		\begin{align*}
			{\Phi} = \tilde{\Phi}_{a\pm} = (|v|^2-10)\big(v\cdot{\nabla_{x}\phi_{a\pm}}(t,x)\big)\mu^{1/2},
		\end{align*}
		where $\phi_a = (\phi_{a+}(x),\phi_{a_-}(x))$ solves 
		\begin{equation}\label{102}\left\{\begin{aligned}
				&-\Delta_x \phi_{a+} = {\partial^\alpha (a_+-a_-)}\ \text{ in }\Omega,\\
				&-\Delta_x \phi_{a-} = {\partial^\alpha (a_--a_+)}\ \text{ in }\Omega,\\
				&{\phi_a}(x)= 0 \ \text{ on }\ x\in \Gamma_i,\ \text{ if }\alpha_i = 1\text{ or }3,\\
				&\partial_n\phi_a(x)= 0\ \text{ on }\ x\in \Gamma_i,\ \text{ if }\alpha_i = 0\text{ or } 2.
			\end{aligned}\right.
		\end{equation}
		The existence and uniqueness of solution to \eqref{122} is guaranteed by \cite[Lamma 4.4.3.1]{Grisvard1985}. 
		When $|\alpha|=0$, \eqref{102} is a pure Neumann problem and we need $\int_{\Omega}(a_+-a_-)\,dx=0$ from conservation laws \eqref{conservation_bounded} to ensure the existence of \eqref{102}. 
		When $|\al|=2$ and $\alpha_i=2$ for some $i$, \eqref{102} is also pure Neumann problem and we need $\int_{\Omega}\partial_{x_ix_i}(a_+-a_-)\,dx=\int_{\Gamma_i}\partial_{x_i}(a_+-a_-)\,dS(x)=0$ from Lemma \ref{Lem25} to ensure the existence of \eqref{102}. 
		For the second term on left hand side of \eqref{100}, taking summation on $\pm$, we have 
		\begin{align*}
			&\quad\,-\sum_{\pm}({\partial^\alpha \PP f},v\cdot{\nabla_{x}\tilde{\Phi}_{a\pm}})_{L^2_{x,v}} \\
			&= -\sum_{\pm}\sum_{j,m=1}^3({\partial^\alpha a_\pm}+{\partial^\alpha b}\cdot v+(|v|^2-3){\partial^\alpha c} ,v_jv_m(|v|^2-10)\mu{(\partial_{x_j}\partial_{x_m}\phi_{a\pm})^\wedge})_{L^2_{x,v}} \\
			&= \sum_{\pm}\sum_{j=1}^3({\partial^\alpha a_\pm} ,{-\partial^2_j\phi_{a\pm}})_{L^2_{x,v}}  = \|{\partial^\alpha a_+}-{\partial^\alpha a_-}\|^2_{L^2_{x,v}} .
		\end{align*}
		Then we estimate the $S_j$ $(1\le j\le 5)$.
		The same as the case of $a_++a_-$, the case of $a_+-a_-$ is also similar to estimate of $c(t,x)$. 
		Following the calculation from \eqref{3.8} to \eqref{4.29}, we have that for $|\alpha|\le K$, 
		\begin{equation}\label{3.32q}
			\sum_{i,j=1}^3\|\partial_{x_ix_j}\phi_a\|^2_{L^2_x}=\|\Delta_x\phi_a\|^2_{L^2_x}\lesssim \|\partial^\alpha (a_+-a_-)\|^2_{L^2_x},
		\end{equation}
		\begin{equation}\label{3.33q}
			\|\nabla_x\phi_a\|_{L^2_x}\lesssim \sum_{|\alpha|\le K-1}\|\pa^\alpha(a_+-a_-)\|_{L^2_x},
		\end{equation}
		and 
		\begin{equation}\label{3.34q}
			\|\pa_t\nabla_x\phi_a\|_{L^2_x}\lesssim \sum_{|\alpha|\le K-1}\|\pa_t\pa^\alpha(a_+-a_-)\|_{L^2_x}\lesssim \sum_{1\le|\alpha|\le K}\|\pa^\alpha \{\I-\P\}f\|_{L^2_xL^2_{\gamma/2}}.
		\end{equation}
		where the last inequality follows from \eqref{98a}$_1$. 
		Then for $S_1$, we apply \eqref{3.34q} to obtain 
		\begin{align*}
			|S_1|&\le \big|\big(\{\I-\P\}\partial^\alpha f, \pa_t\Phi_a\big)_{L^2_{x,v}}\big|
			+\big|\big(\P\partial^\alpha f, \pa_t\Phi_a\big)_{L^2_{x,v}}\big|\\
			&\lesssim \|\{\I-\P\}\partial^\alpha f\|^2_{L^2_xL^2_{\gamma/2}} + \eta\|\partial^\alpha b\|_{L^2_x}^2 + C_\eta\|\pa_t\na_x\phi_a\|^2_{L^2_x}\\
			&\lesssim C_\eta\sum_{|\alpha|\le K}\|\{\I-\P\}\partial^\alpha f\|^2_{L^2_xL^2_{\gamma/2}}+ \eta\|\partial^\alpha b\|_{L^2_x}^2.
		\end{align*}	
		For $S_2$, by \eqref{3.32q}, we have 
		\begin{align*}
			|S_2|\lesssim C_\eta\|\{\I-\P\}\partial^\alpha f\|^2_{L^2_xL^2_{\gamma/2}} + \eta\|\partial^\alpha (a_+-a_-)\|^2_{L^2_x}. 
		\end{align*}
		For $S_3$, by \eqref{3.33q}, we have 
		\begin{align*}
			|S_3|\lesssim C_\eta \|\{\I-\P\}\partial^\alpha f\|^2_{L^2_xL^2_{\gamma/2}} + C_\eta \|(\partial^\alpha g,\zeta)_{L^2_v}\|^2_{L^2_x} + \eta\sum_{|\alpha|\le K-1} \|\pa^\alpha(a_+-a_-)\|_{L^2_x}^2. 
		\end{align*}
		For $S_4$, from \eqref{102} we know that $\phi_{a+} = -\phi_{a-}$.
		Thus
		\begin{align}\label{4.55}\notag
			\sum_\pm S_4 &= \sum_\pm\mp(\partial^\alpha {\nabla_x\phi}\cdot v\mu^{1/2},{\Phi_{a\pm}})_{L^2_{x,v}}\\
			&\notag= 5\sum_{\pm,j} \mp (\partial^\alpha {\partial_{x_j}\phi} ,{\partial_{x_j}\phi_{a\pm}}(t,x))_{L^2_{x}}\\
			&= -10\sum_{j=1}^3  (\partial^\alpha {\partial_{x_j}\phi} ,{\partial_{x_j}\phi_{a+}}(t,x))_{L^2_{x}}.
		\end{align}
		By using the boundary value from \eqref{102} and \eqref{4.4}, we know that if
		$\alpha_j=0$ or $2$, then $\pa_{x_j}\phi_a=0$ on $\Gamma_j$. If $\alpha_j=1$ or $3$, then $\pa^\alpha \phi=0$ on $\Gamma_j$, which is from \eqref{Neumann} and \eqref{4.16}. Thus, 
		\begin{align*}
			\sum_{j=1}^3  (\partial^\alpha {\partial_{x_j}\phi} ,{\partial_{x_j}\phi_{a+}})_{L^2_{x}} 
			&= 
			\sum_{j=1}^3  \int_{\Gamma_j}\partial^\alpha {\phi}\, {\partial_{x_j}\phi_{a+}}\,dS(x) - (\partial^\alpha {\phi} ,\Delta_x\phi_{a+})_{L^2_{x}} \\
			&=  
			(\partial^\alpha {\phi} ,\pa^\alpha(a_+-a_-))_{L^2_{x}}. 
		\end{align*}
		By \eqref{4.3}, we know that $-\Delta_x\phi = a_+-a_-$. 
		If $|\alpha|=0$, then by boundary condition \eqref{Neumann}, we have 
		\begin{align*}
			({\phi} ,(a_+-a_-))_{L^2_{x}}
			&= ({\phi} ,-\Delta_x\phi)_{L^2_{x}} = \|\na_x\phi\|_{L^2_x}^2. 
		\end{align*}
		If $\pa^\alpha=\pa_{x_i}$, then by the fact that $\partial_{x_i}\phi=0$ on $\Gamma_i$ and $\partial_{x_ix_j}\phi=0$ on $\Gamma_j$ for $j\neq i$, we have 
		\begin{align*}\notag
			(\partial^\alpha {\phi} ,\pa^\alpha(a_+-a_-))_{L^2_{x}}
			&= (\partial^\alpha {\phi} ,-\pa^\alpha\Delta_x\phi)_{L^2_{x}}=\|\partial^\alpha \na_x{\phi}\|^2_{L^2_{x}}.
		\end{align*}
		If $\pa^\alpha=\pa_{x_ix_i}$, then
		\begin{align*}
			(\partial^\alpha {\phi} ,\pa^\alpha(a_+-a_-))_{L^2_{x}}
			= \int_{\Gamma_i}\partial_{x_ix_i}\phi\,\pa_{x_i}(a_+-a_-)\,dS(x) - (\partial_{x_ix_ix_i}\phi,\pa_{x_i}(a_+-a_-))_{L^2_{x}}. 
		\end{align*}The first term on the right hand side is zero because $\partial_{x_i}a_\pm=0$ on $\Gamma_i$ from \eqref{Lem25a}. Noticing $\pa_{x_ix_j}\phi=0$ on $\Gamma_j$ and $\Gamma_i$ for $j\neq i$, we have 
		\begin{align*}
			(\partial^\alpha {\phi} ,\pa^\alpha(a_+-a_-))_{L^2_{x}} &= (\partial_{x_ix_ix_i}\phi,\pa_{x_i}\Delta_x\phi)_{L^2_{x}}\\
			&= (\partial_{x_ix_ix_i}\phi,\pa_{x_ix_ix_i}\phi)_{L^2_{x}}-\sum_{j\neq i} (\partial_{x_ix_ix_ix_j}\phi,\pa_{x_ix_j}\phi)_{L^2_{x}}\\
			&= (\partial_{x_ix_ix_i}\phi,\pa_{x_ix_ix_i}\phi)_{L^2_{x}}+\sum_{j\neq i} (\partial_{x_ix_ix_j}\phi,\pa_{x_ix_ix_j}\phi)_{L^2_{x}}
			= \|\pa^\alpha\na_x\phi\|^2_{L^2_x}.
		\end{align*}
		If $\pa^\alpha=\pa_{x_ix_jx_k}$, then we need more discussion. Note that we will apply \eqref{Neumann} and \eqref{4.16} frequently. 
		If $i,j,k$ are pairwise different, then from \eqref{4.4}, we have $\pa_{x_ix_jx_k}\phi=0$ on $\Gamma_i$, $\Gamma_j$ and $\Gamma_k$ and hence  
		\begin{align*}
			(\partial^\alpha {\phi} ,\pa^\alpha(a_+-a_-))_{L^2_{x}}
			= (\partial^\alpha {\phi} ,-\pa^\alpha\Delta_x\phi)_{L^2_{x}} = \|\pa^\alpha\na_x\phi\|^2_{L^2_x}.
		\end{align*}
		If $i=j\neq k$, then by $\pa_{x_ix_k}(a_+-a_-)=0$ on $\Gamma_i$ and $\pa_{x_ix_ix_ix_k}\phi=0$ on $\Gamma_k$, we have 
		\begin{align*}
			(\partial^\alpha {\phi} ,\pa^\alpha(a_+-a_-))_{L^2_{x}}
			&= (\partial_{x_ix_ix_ix_kx_k} {\phi} ,\pa_{x_i}(a_+-a_-))_{L^2_{x}}\\
			&= (\partial_{x_ix_ix_ix_kx_k} {\phi} ,-\pa_{x_ix_ix_i}\phi)_{L^2_{x}}\\ 
			&\qquad+\sum_{m=k} (\partial_{x_ix_ix_ix_kx_k} {\phi} ,-\pa_{x_ix_mx_m}\phi)_{L^2_{x}}\\ 
			&\qquad+\sum_{m\neq k,i} (\partial_{x_ix_ix_ix_kx_k} {\phi} ,-\pa_{x_ix_mx_m}\phi)_{L^2_{x}}\\ 
			&= \|\pa^\alpha\na_x\phi\|^2_{L^2_x}.
		\end{align*}
		The last identity follows from suitable integration by parts. 
		Note that if $m=i$, we have $\pa_{x_ix_ix_ix_k}\phi=0$ on $\Gamma_k$. If $m=k$, we have $\pa_{x_ix_mx_m}\phi=0$ on $\Gamma_i$. If $m\neq k,i$, we have $\partial_{x_ix_ix_ix_k} {\phi}=0$ on $\Gamma_k$, $\pa_{x_ix_mx_mx_k}\phi=0$ on $\Gamma_i$ and $\pa_{x_ix_ix_mx_k}\phi=0$ on $\Gamma_m$. 
		
		\smallskip
		If $i=j=k$, then by $\pa_{x_ix_ix_i}\phi=0$ on $\Gamma_i$ and $\pa_{x_ix_ix_ix_m}\phi=0$ on $\Gamma_m$ for $m\neq i$, we have 
		\begin{align*}
			(\partial^\alpha {\phi} ,\pa^\alpha(a_+-a_-))_{L^2_{x}}
			&= (\pa_{x_ix_ix_ix_i} {\phi} ,-\pa_{x_ix_i}(a_+-a_-))_{L^2_{x}}\\
			&= (\pa_{x_ix_ix_ix_i} {\phi} ,\pa_{x_ix_ix_ix_i}\phi)_{L^2_{x}}\\&\qquad+\sum_{m\neq i}(\pa_{x_ix_ix_ix_i} {\phi} ,\pa_{x_ix_ix_mx_m}\phi)_{L^2_{x}}\\
			&= \|\pa^\alpha\na_x\phi\|^2_{L^2_x}.
		\end{align*} 
		Plugging the above estimates into \eqref{4.55}, we have 
		\begin{align*}
			\sum_\pm S_4 = -10\|\pa^\alpha\na_x\phi\|^2_{L^2_x}.
		\end{align*}
		Similar to the calculation we used to derive \eqref{s5}, we know that $S_5=0$. 
		Combining the above estimates, taking summation $|\alpha|\le K$ and $\pm$ and letting $\eta>0$ small enough, we have 
		\begin{multline}\label{122e}
			\partial_t\sum_{|\alpha|\le K}(\partial^\alpha f,\tilde{\Phi}_a)_{L^2_{x,v}} + \lambda\sum_{|\alpha|\le K}\Big(\|{\partial^\alpha (a_+-a_-)}\|^2_{L^2_{x}}+\|\pa^\alpha\na_x\phi\|^2_{L^2_x}\Big)\\
			\lesssim\sum_{|\alpha|\le K}\Big( C_\eta\|\partial^\alpha\{\I-\P\}{f}\|^2_{L^2_{x}L^2_{\gamma/2}}
			+\|({\partial^\alpha g},\zeta)_{L^2_v}\|^2_{L^2_{x}}+\eta\|\partial^\alpha b\|_{L^2_x}^2\Big).
		\end{multline}

		Now we take the linear combination $\eqref{122e}+\kappa\times\eqref{122a}+\kappa^2\times\eqref{122b}+\kappa^3\times\eqref{122d}$ and let $\kappa,\eta$ sufficiently small, then 
		\begin{equation*}
			\partial_t\E_{int}(t) + \lambda\sum_{|\alpha|\le K}\|\pa^\alpha[{a_+}, a_-,{b},{c}]\|^2_{L^2_{x}}
			\lesssim \sum_{|\alpha|\le K}\Big(\|\partial^\alpha\{\I-\P\}{ f}\|^2_{L^2_{x}L^2_{\gamma/2}}+\|({\partial^\alpha g},\zeta)_{L^2_v}\|^2_{L^2_{x}}\Big)+ \|E\|_{L^2_x}^4,
		\end{equation*}
		where 
		\begin{multline*}
			\E_{int}(t) = \sum_{|\alpha|\le K}\Big((\partial^\alpha f,\tilde{\Phi}_a)_{L^2_{x,v}}+\sum_\pm\big(\kappa(\partial^\alpha f_\pm,\Phi_c)_{L^2_{x,v}} + \kappa^2(\partial^\alpha f_\pm,\Phi_b)_{L^2_{x,v}}\big)\\+\kappa^3(\partial^\alpha f,\Phi_a)_{L^2_{x,v}}\Big).
		\end{multline*}
		Note that $|a_+-a_-|^2+|a_++a_-|^2=2|a_+|^2+2|a_-|^2$. 
		Using \eqref{3.10}, \eqref{3.12}, \eqref{4.28}, \eqref{3.21}, \eqref{3.21c}, \eqref{3.27}, \eqref{4.42}, \eqref{3.33} and \eqref{3.33q}, we know that 
		\begin{align*}
			\E_{int}(t)\lesssim \sum_{|\alpha|\le K}\|\partial^\alpha f\|^2_{L^2_xL^2_v}. 
		\end{align*}This completes the Theorem \ref{Thm41}.
		\end{proof}
	
	Now we estimate $\|({\partial^\alpha g},\zeta)_{L^2_v}\|^2_{L^2_{x}}$ when $g$ is given by \eqref{g}. For $|\alpha|\le K$, by \eqref{35a}, we apply $L^3-L^6$ and $L^\infty-L^2$ H\"{o}lder's inequality to obtain  
	\begin{align*}
		\notag
		\int_{\Omega}|({\partial^\alpha\Gamma(f,f)},\zeta(v))_{L^2_{v}}|^2\,dx
		&\notag\lesssim \int_{\Omega}\sum_{\alpha_1\le\alpha}|{\partial^{\alpha-\alpha_1} f}|^2_{L^2_v}|{\partial^{\alpha_1} f}|^2_{L^2_{D}}\,dx\\
		&\notag\lesssim \sum_{2\le|\alpha_1|\le K}\|{\partial^{\alpha-\alpha_1} f}\|^2_{L^\infty_xL^2_v}\|{\partial^{\alpha_1} f}\|^2_{L^2_xL^2_{D}}\\
		&\notag\qquad+\sum_{|\alpha_1|=1}\|{\partial^{\alpha-\alpha_1} f}\|^2_{L^3_xL^2_v}\|{\partial^{\alpha_1} f}\|^2_{L^6_xL^2_{D}}\\
		&\notag\qquad+\sum_{|\alpha_1|=0}\|{\partial^{\alpha-\alpha_1} f}\|^2_{L^2_xL^2_v}\|{\partial^{\alpha_1} f}\|^2_{L^\infty_xL^2_{D}}\\
		&\lesssim \|{ f}\|^2_{H^K_xL^2_v}\|\na_x{f}\|^2_{H^{K}_xL^2_{D}},
	\end{align*}where we used embedding $\|f\|_{L^3_x(\Omega)}\lesssim\|f\|_{H^1_x(\Omega)}$, $\|f\|_{L^6_x(\Omega)}\lesssim\|\na_xf\|_{L^2_x(\Omega)}$ and $\|f\|_{L^\infty_x(\Omega)}\lesssim\|\na_xf\|_{H^1_x(\Omega)}$ from \cite[Section V and (V.21)]{Adams2003}. 
	Similarly, we have 
	\begin{align*}
		\int_{\Omega}|(\partial^\alpha(\nabla_x\phi\cdot \nabla_vf_\pm),\zeta(v))_{L^2_{v}}|^2\,dx
		&\notag\lesssim \int_{\Omega}\sum_{\alpha_1\le\alpha}|{\partial^{\alpha-\alpha_1} \nabla_x\phi}|^2|{\partial^{\alpha_1} f_\pm}|^2_{L^2_{D}}\,dx\\
		&\notag\lesssim \sum_{2\le|\alpha_1|\le K}\|{\partial^{\alpha-\alpha_1} \nabla_x\phi}\|^2_{L^\infty_x} \|{\partial^{\alpha_1} f}\|^2_{L^2_xL^2_{D}}\\
		&\notag\qquad+\sum_{|\alpha_1|=1}\|{\partial^{\alpha-\alpha_1} \nabla_x\phi}\|^2_{L^3_x}\|{\partial^{\alpha_1} f}\|^2_{L^6_xL^2_{D}}\\
		&\notag\qquad+\sum_{|\alpha_1|=0}\|{\partial^{\alpha-\alpha_1} \nabla_x\phi}\|^2_{L^2_x}\|{\partial^{\alpha_1} f}\|^2_{L^\infty_xL^2_{D}}\\
		&\lesssim \|{\na_x\phi}\|^2_{H^K_x}\|\na_x{f}\|^2_{H^{K}_xL^2_{D}},
	\end{align*}
	and 
	\begin{align*}
		\int_{\Omega}|(\partial^\alpha(\nabla_x\phi\cdot vf_\pm),\zeta(v))_{L^2_{v}}|^2\,dx
		&\notag\lesssim \int_{\Omega}\sum_{\alpha_1\le\alpha}|{\partial^{\alpha-\alpha_1} \nabla_x\phi}|^2|{\partial^{\alpha_1} f_\pm}|^2_{L^2_{D}}\,dx\\
		&\lesssim \|{\na_x\phi}\|^2_{H^K_x}\|\na_x{f}\|^2_{H^{K}_xL^2_{D}}.
	\end{align*}
Note that $\zeta$ has exponential velocity decay.
	Combining the above estimates, we obtain 
	\begin{align}\label{4.58}
		\|({\partial^\alpha g},\zeta)_{L^2_v}\|^2_{L^2_{x}}\lesssim \Big(\|{ f}\|^2_{H^K_xL^2_v}+\|{\na_x\phi}\|^2_{H^{K}_x}\Big)\|\na_x{f}\|^2_{H^{K}_xL^2_{D}}.
	\end{align}


	\section{Global Existence}\label{Sec_Main}
	
	
	In this section, we use weight function $w_{l,\nu}(\al,\beta)$ defined in \eqref{w22} and 
	and define energy functional 
	\begin{align}\label{8.2}
		X(t) = \sup_{0\le \tau\le t}e^{\delta \tau^p}\E(\tau) + \sup_{0\le \tau\le t}\E_\nu(\tau), 
	\end{align}
	where $p$ is given by \eqref{p}, $\E_\nu(t)$ is given by \eqref{E1} and $\E=\E_0$. 
	Let the {\em a priori} assumption to be 
	\begin{align}\label{priori}
		\sup_{0\le t\le T}X(t)\le \delta_0, 
	\end{align}
	for some small $\delta_0>0$. 
	Assuming \eqref{priori}, by Sobolev inequalities, 
	we have $\|\phi\|_{L^\infty_x}\lesssim \|\na_x\phi\|_{H^1}\le \delta_0$. Then 
	\begin{align*}
		|e^{\pm \phi}|\approx 1, 
	\end{align*}
	which will be frequently used later on. Next, we write the useful Lemma for estimate of trilinear terms. 
	
	\begin{Lem}\label{Lem91}
		Let $T>0$ and $l\in\R$. Let $\gamma\ge -3$ for Landau case, $(\gamma,s)\in\{-\frac{3}{2}<\gamma+2s<0 ,\ \frac{1}{2}\le s<1\}\cup \{\gamma+2s\ge 0,\ 0<s<1\}$ for Boltzmann case. 
		If $1\le|\alpha|\le 3$, then  
		\begin{align}\label{7.4a}
			\sum_{\alpha_1<\alpha}\Big|\Big(\pa^{\alpha-\alpha_1}\nabla_x\phi\cdot v\,\pa^{\alpha_1}f_\pm, e^{\pm\phi}w_{2l,2\nu}(2\alpha,0)\pa^\alpha f_\pm\Big)_{L^2_{x,v}}\Big|\lesssim \sqrt{\E_\nu(t)}\D_\nu(t), 
		\end{align}
		where $\E_\nu(t)$ and $\D_\nu(t)$ are given by \eqref{E1} and \eqref{D} respectively.
		If $1\le|\alpha|+|\beta|\le 3$, then 
		\begin{align}
			\label{7.4b}
			\sum_{\alpha_1+\beta_1<\alpha+\beta}\Big|\Big(\pa^{\alpha-\alpha_1}\nabla_x\phi\cdot \pa_{\beta-\beta_1}v\,\pa^{\alpha_1}_{\beta_1} f_\pm, e^{\pm\phi}w_{2l,2\nu}(2\alpha,2\beta)\pa^\alpha_\beta f_\pm\Big)_{L^2_{x,v}}\Big|\lesssim \sqrt{\E_\nu(t)}\D_\nu(t).
		\end{align}
		Moreover, if $|\alpha|\le 3$, we have 
		\begin{align}\label{7.5}
			\big(\pa^\alpha (\nabla_x\phi\cdot \nabla_vf_\pm),e^{\pm\phi}w_{2l,2\nu}(2\alpha,0)\pa^\alpha f_\pm\big)_{L^2_{x,v}}\lesssim \sqrt{\E_\nu(t)}\D_\nu(t)+\|\na_x\phi\|_{H^2_x}\E_\nu(t). 
		\end{align}
		If $|\alpha|+|\beta|\le 3$, we have 
		\begin{align}\label{7.5a}
			\big(\pa^\alpha_\beta (\nabla_x\phi\cdot \nabla_vf_\pm),e^{\pm\phi}w_{2l,2\nu}(2\alpha,2\beta)\pa^\alpha_\beta f_\pm\big)_{L^2_{x,v}}\lesssim \sqrt{\E_\nu(t)}\D_\nu(t)+\|\na_x\phi\|_{H^2_x}\E_\nu(t). 
		\end{align}
	\end{Lem}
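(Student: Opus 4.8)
The plan is to prove all four estimates by a single scheme: expand $\partial^\alpha$ (resp.\ $\partial^\alpha_\beta$) of the transport term by the Leibniz rule, use $|e^{\pm\phi}|\approx 1$ from the a priori bound \eqref{priori}, write the doubled weight as $w_{2l,2\nu}(2\alpha,2\beta)=w_{l,\nu}(\alpha,\beta)^2$ and distribute one copy of $w_{l,\nu}(\alpha,\beta)$ to each of the two $f$-factors, and then bound every resulting trilinear term by Hölder in $x$ using the Sobolev embeddings $H^1_x\hookrightarrow L^3_x,L^6_x$ and $H^2_x\hookrightarrow L^\infty_x$ on $\Omega$ (valid on the union of cubes), putting the factor of lowest order in $L^\infty_x$ or $L^6_x$ and the top-order factor in $L^2_x$. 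The electric-field factors $\partial^{\alpha-\alpha_1}\nabla_x\phi$ are estimated by $\sqrt{\E_\nu}$ or $\sqrt{\D_\nu}$ because all derivatives of $E$ up to order three appear in both $\E_\nu$ and $\D_\nu$; the top-order $f$-factor is placed in $\|w_{l,\nu}(\alpha,\beta)\partial^\alpha_\beta\cdot\|_{L^2_xL^2_D}$, a summand of $\D_\nu$; everything else goes into $\sqrt{\E_\nu}$.

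For \eqref{7.4a} and \eqref{7.4b} the sum is over $\alpha_1+\beta_1<\alpha+\beta$, so at least one derivative lands on $\nabla_x\phi$ and no self-adjointness is needed. The only issue is the velocity weight: the multiplier $v$ (resp.\ $\partial_{\beta-\beta_1}v$, which equals $v$ when $\beta_1=\beta$ and a constant vector when $|\beta-\beta_1|=1$) costs at most $\langle v\rangle$, while transferring one of the strictly-extra derivatives from $\partial^\alpha_\beta f$ back to $\partial^{\alpha_1}_{\beta_1}f$ lowers the weight by $\langle v\rangle^{r}$ or $\langle v\rangle^{q}$; since $r,q>1$ for soft potentials by \eqref{pq}, the net balance $\langle v\rangle^{1-r(|\alpha|-|\alpha_1|)}$ is not merely bounded but in fact $\le\langle v\rangle^{(\gamma+2s)/2}$ (Boltzmann) or $\le\langle v\rangle^{(\gamma+2)/2}$ (Landau), so the top-order factor is absorbed by $\|w_{l,\nu}(\alpha,\beta)\partial^\alpha_\beta f\|_{L^2_xL^2_D}\le\sqrt{\D_\nu}$ rather than only $\sqrt{\E_\nu}$. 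This is precisely the mechanism the weight \eqref{w22} was designed for. Summing over $\alpha_1,\beta_1$ gives $\sqrt{\E_\nu}\D_\nu$.

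For \eqref{7.5} and \eqref{7.5a} there is one extra term, $\alpha_1=\alpha$ (resp.\ $\alpha_1+\beta_1=\alpha+\beta$), in which $\nabla_v$ falls entirely on $\partial^\alpha_\beta f$. Here I would integrate by parts in $v$: since $\nabla_x\phi$ and $e^{\pm\phi}$ are $v$-independent, $\big(\nabla_x\phi\cdot\nabla_v\partial^\alpha_\beta f,e^{\pm\phi}w^2\partial^\alpha_\beta f\big)_{L^2_{x,v}}=-\tfrac12\big(\nabla_x\phi\cdot\nabla_v(w^2)\,\partial^\alpha_\beta f,e^{\pm\phi}\partial^\alpha_\beta f\big)_{L^2_{x,v}}$, and $|\nabla_v(w_{l,\nu}(\alpha,\beta)^2)|\lesssim w_{l,\nu}(\alpha,\beta)^2$ (the exponential factor dominates for soft potentials, and for hard potentials one even gains $\langle v\rangle^{-1}$); this term is then $\lesssim\|\nabla_x\phi\|_{L^\infty_x}\E_\nu\lesssim\|\nabla_x\phi\|_{H^2_x}\E_\nu$, the last term in \eqref{7.5}. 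The remaining terms, with $\alpha_1<\alpha$ and $\nabla_v$ acting on $\partial^{\alpha_1}_\beta f$ of total order $\le 3$, are treated as in the previous paragraph.

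I expect the genuine obstacle to be the velocity-weight bookkeeping for the soft-potential case of these last terms when $|\alpha|-|\alpha_1|=1$: the bare $\nabla_v$ raises the velocity-derivative count by one, which costs $\langle v\rangle^{q-r}=\langle v\rangle^{-\gamma}$ of weight relative to the energy norm of $\partial^{\alpha_1}_{\beta+e_j}f$, and this deficit cannot be recovered from the test-function factor alone. To close it I would split $\partial^{\alpha_1}_\beta f=\P\partial^{\alpha_1}_\beta f+(\I-\P)\partial^{\alpha_1}_\beta f$: the macroscopic part is smooth with Gaussian decay and its coefficients lie in $\D_\nu$ by the macroscopic estimates of Section~\ref{Sec_Marco}, while for the microscopic part one uses that $|w(\I-\P)g|_{L^2_D}$ controls a full velocity derivative of $(\I-\P)g$ with weight $w\langle v\rangle^{\gamma/2}$ in the Landau case (and, exploiting $s\ge\tfrac12$ together with \eqref{pq}, an interpolation between the $\langle D_v\rangle^s$-bound inside $|w(\I-\P)g|_{L^2_D}$ and the full-derivative bound from $\E_\nu$ in the Boltzmann case), which more than offsets the $\langle v\rangle^{-\gamma}$ loss since $r\ge-\gamma/2$ by \eqref{pq}. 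After this, the routine Sobolev--Hölder distribution of the three factors closes both \eqref{7.5} and \eqref{7.5a}.
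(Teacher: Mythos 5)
Your proof of \eqref{7.4a} and \eqref{7.4b}, of the $\alpha_1=\alpha$ term in \eqref{7.5} via integration by parts in $v$, and of the $\alpha_1<\alpha$ terms in the Landau case and in the hard Boltzmann case all coincide with the paper's argument, including the weight-transfer inequality $\langle v\rangle\,w_{l,\nu}(\alpha,0)\lesssim\langle v\rangle^{\gamma+2s}\,w_{l,\nu}(\alpha-e_i,0)$ that allows both $f$-factors to be placed in $L^2_D$ (not just the top one, as your opening summary loosely states).

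There is, however, a genuine gap in your treatment of the Boltzmann soft-potential case of \eqref{7.5}--\eqref{7.5a} with $\alpha_1<\alpha$. You propose bounding $\|\langle v\rangle^{-\gamma/2}w_{l,\nu}(\alpha,0)\partial^{\alpha_1}\nabla_v f\|_{L^2_v}$ by ``interpolation between the $\langle D_v\rangle^s$-bound inside $|w(\I-\P)g|_{L^2_D}$ and the full-derivative bound from $\E_\nu$.'' But the quantity you want to control carries exactly one full $v$-derivative; since the $\E_\nu$-endpoint is itself at order $1$, any such interpolation collapses to the $\E_\nu$-endpoint and yields only $\sqrt{\E_\nu}$ for this middle factor. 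The resulting trilinear bound is then $\|E\|_{H^3}\sqrt{\E_\nu}\sqrt{\D_\nu}\approx\E_\nu\sqrt{\D_\nu}$, which is not of the required form $\sqrt{\E_\nu}\D_\nu+\|\nabla_x\phi\|_{H^2_x}\E_\nu$; for soft potentials (where $\E_\nu\not\lesssim\D_\nu$ and $\E_\nu$ has no time decay) this extra $\E_\nu^{3/2}$-type term cannot be absorbed when closing \eqref{8.33}. The $\P$/$(\I-\P)$ split does not rescue this: the macroscopic part is fine, but for the microscopic part the non-cutoff Boltzmann $L^2_D$-norm still only controls $\langle D_v\rangle^s$, not a full velocity derivative.

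The paper's route is different and this is the one key step you would need to replicate: it uses the weighted pseudo-differential interpolation \eqref{410},
\begin{equation*}
|f|_{H^1_v}\ \lesssim\ |f\langle v\rangle^k|_{H^s_v}\ +\ |f\langle v\rangle^{-ks/(1-s)}|_{H^{1+s}_v},
\end{equation*}
with the $v$-dependent weight $\langle v\rangle^k = w_{l,\nu}(\alpha-e_i,\beta)^{1-s}w_{l,\nu}(\alpha-e_i,\beta+e_i)^{-(1-s)}$. Both resulting norms are bounded by $\sqrt{\D_\nu(t)}$: the $H^s_v$ term is the dissipation of $\partial^{\alpha_1}_\beta f$ with weight $w_{l,\nu}(\alpha-e_i,\beta)$, and the $H^{1+s}_v$ term is absorbed by the dissipation of $\partial^{\alpha_1}_{\beta+e_j}f$ with weight $w_{l,\nu}(\alpha-e_i,\beta+e_i)$. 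The inequality $0\le rs+(r-q)(1-s)+\gamma$ guaranteed by the choice \eqref{pq} is exactly what makes the weights match; this, not ``$r\ge-\gamma/2$,'' is the constraint that matters. In other words, the interpolation must stay entirely inside $\D_\nu$, trading $H^s$ at one $\beta$-index against $H^{1+s}$ at $\beta+e_i$, rather than trading dissipation against energy.
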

	\begin{proof}
		Let $s=1$ in Landau case. Note that $\<v\>w_{l,\nu}(\alpha,0)\lesssim \<v\>^{\gamma+2s}w_{l,\nu}(\alpha-e_i,0)$ for $i=1,2,3$. Then \eqref{7.4a} can be estimated by 
		\begin{align}\notag\label{4.8}
			&\quad\,
			\sum_{\alpha_1<\alpha}\big\|\pa^{\alpha-\alpha_1}\na_x\phi\,w_{l,\nu}(\alpha-e_i,0)\<v\>^{\frac{\gamma+2s}{2}}\pa^{\alpha_1}f_\pm\big\|_{L^2_xL^2_v}\|w_{l,\nu}(\alpha,0)\<v\>^{\frac{\gamma+2s}{2}}\pa^\alpha f_\pm\|_{L^2_xL^2_v}\\
			&\notag\lesssim
			\Big(\sum_{|\alpha_1|= 0} \|\pa^{\alpha-\alpha_1}\na_x\phi\|_{L^2_x}\|w_{l,\nu}(\alpha-e_i,0)\pa^{\alpha_1}f_\pm\|_{L^\infty_xL^2_D} \\
			&\notag\qquad\quad+ \sum_{|\alpha_1|= 1} \|\pa^{\alpha-\alpha_1}\na_x\phi\|_{L^6_x}\|w_{l,\nu}(\alpha-e_i,0)\pa^{\alpha_1}f_\pm\|_{L^3_xL^2_D}\\
			&\notag\qquad\quad+ \sum_{|\alpha_1|= 2} \|\pa^{\alpha-\alpha_1}\na_x\phi\|_{L^\infty_x}\|w_{l,\nu}(\alpha-e_i,0)\pa^{\alpha_1}f_\pm\|_{L^2_xL^2_D}\Big)
			\|w_{l,\nu}(\alpha,0)\pa^\alpha f_\pm\|_{L^2_xL^2_D}\\
			&\notag\lesssim \|E\|_{H^3_x}\sum_{|\alpha_1|\le 2}\|w_{l,\nu}(\alpha_1,0)\pa^{\alpha_1}  f_\pm\|_{L^2_xL^2_D}\|w_{l,\nu}(\alpha,0)\pa^\alpha f_\pm\|_{L^2_xL^2_D}\\&\lesssim \sqrt{\E_\nu(t)}\D_\nu(t).
		\end{align}
		For \eqref{7.4b}, notice that $\pa_{\beta-\beta_1}v\,w_{l,\nu}(\alpha,\beta)\lesssim\min\{ \<v\>^{\gamma+2s}w_{l,\nu}(\alpha-e_i,\beta), \<v\>^{\gamma+2s}w_{l,\nu}(\alpha,\beta-e_i)\}$. Then similar to \eqref{4.8}, \eqref{7.4b} is bounded above by 
		\begin{align*}
			&\quad\,
			\sum_{\alpha_1+\beta_1<\alpha+\beta}\big\|\pa^{\alpha-\alpha_1}\na_x\phi\,w_{l,\nu}(\alpha-e_i,\beta)\<v\>^{\frac{\gamma+2s}{2}}\pa^{\alpha_1}_{\beta_1}f_\pm\big\|_{L^2_xL^2_v}\|w_{l,\nu}(\alpha,\beta)\<v\>^{\frac{\gamma+2s}{2}}\pa^\alpha_\beta f_\pm\|_{L^2_xL^2_v}\\
			&\lesssim \|E\|_{H^3_x}\sum_{|\alpha_1|+|\beta_1|\le 2}\|w_{l,\nu}(\alpha_1,\beta_1)\pa^{\alpha_1}_{\beta_1} f_\pm\|_{L^2_xL^2_D}\|w_{l,\nu}(\alpha,\beta)\pa^\alpha_\beta f_\pm\|_{L^2_xL^2_D}\\&\lesssim \sqrt{\E_\nu(t)}\D_\nu(t).
		\end{align*}
		For \eqref{7.5}, we have 
		\begin{multline*}
			\big|\big(\pa^\alpha (\nabla_x\phi\cdot \nabla_vf_\pm),e^{\pm\phi}w_{2l,2\nu}(2\alpha,0)\pa^\alpha f_\pm\big)_{L^2_{x,v}}\big|
			\\\lesssim \sum_{\alpha_1\le\alpha}\big|\big(\pa^{\alpha-\alpha_1}\nabla_x\phi\cdot \pa^{\alpha_1}\nabla_vf_\pm,e^{\pm\phi}w_{2l,2\nu}(2\alpha,0)\pa^\alpha f_\pm\big)_{L^2_{x,v}}\big|. 
		\end{multline*}
		When $\alpha_1=\alpha$, taking integration by parts with respect to $\na_v$, we have 
		\begin{align*}\notag
			&\quad\,\big|\big(\nabla_x\phi\cdot \pa^{\alpha}\nabla_vf_\pm,e^{\pm\phi}w_{2l,2\nu}(2\alpha,0)\pa^\alpha f_\pm\big)_{L^2_{x,v}}\big|\\
			&\notag= \|\nabla_x\phi\|_{L^\infty_x}\int_{\Omega}\int_{\R^3} |\pa^{\alpha}f_\pm|^2\big|\nabla_v(w_{l,\nu}(\alpha,0))w_{l,\nu}(\alpha,0)\big|\,dvdx\\
			&\notag\lesssim \|\na_x\phi\|_{H^2_x}\|w_{l,\nu}(\alpha,0)\pa^{\alpha}f_\pm\|^2_{L^2_{x}L^2_v}\\
			&\lesssim \|\na_x\phi\|_{H^2_x}\E_\nu(t).
		\end{align*}
		Note that $|\na_vw_{l,\nu}(\alpha,0)|\le w_{l,\nu}(\alpha,0)$.
		
		\smallskip
		When $\alpha_1<\alpha$ in Landau case, we have $w_{l,\nu}(\alpha,0)=\<v\>^{\gamma+1}w_{l,\nu}(\alpha-e_i,0)$. Regarding $\<v\>^{\gamma/2}\na_vf$ as the dissipation term, we have  
		\begin{align*}\notag
			&\quad\,\sum_{\alpha_1<\alpha}\big|\big(\pa^{\alpha-\alpha_1}\nabla_x\phi\cdot \pa^{\alpha_1}\nabla_vf_\pm,e^{\pm\phi}w_{2l,2\nu}(2\alpha,0)\pa^\alpha f_\pm\big)_{L^2_{x,v}}\big|\\
			&\lesssim \sum_{\alpha_1<\alpha}\big\|\big(\pa^{\alpha-\alpha_1}\nabla_x\phi\cdot w_{l,\nu}(\alpha-e_i,0)\<v\>^{\frac{\gamma}{2}}\pa^{\alpha_1}\nabla_vf_\pm\big\|_{L^2_xL^2_v}\|w_{l,\nu}(\alpha,0)\<v\>^{\frac{\gamma+2}{2}}\pa^\alpha f_\pm\|_{L^2_{x,v}}\\
			&\notag\lesssim 
			\Big(
			\sum_{|\alpha-\alpha_1|=3}\|\pa^{\alpha-\alpha_1}\nabla_x\phi\|_{L^2_x} \|\pa^{\alpha_1} f_\pm\|_{L^\infty_xL^2_{D,w_{l,\nu}(\alpha-e_i,0)}}\\&\notag\qquad
			+
			\sum_{|\alpha-\alpha_1|=2}\|\pa^{\alpha-\alpha_1}\nabla_x\phi\|_{L^6_x} \|\pa^{\alpha_1} f_\pm\|_{L^3_xL^2_{D,w_{l,\nu}(\alpha-e_i,0)}}\\&\notag\qquad
			+\sum_{|\alpha-\alpha_1|=1}\|\pa^{\alpha-\alpha_1}\nabla_x\phi\|_{L^\infty_x} \|\pa^{\alpha_1} f_\pm\|_{L^2_xL^2_{D,w_{l,\nu}(\alpha-e_i,0)}}\Big)\|w_{l,\nu}(\alpha,0)\pa^\alpha f_\pm\|_{L^2_{x,v}}\\
			&\notag\lesssim \|E\|_{H^3_x}\sum_{|\alpha_1|\le 2}\|\pa^{\alpha_1} f_\pm\|_{L^2_xL^2_{D,w_{l,\nu}(\alpha_1,0)}}\|w_{l,\nu}(\alpha,0)\pa^\alpha f_\pm\|_{L^2_{x}L^2_v}\\
			&\lesssim \sqrt{\E_\nu(t)}\D_\nu(t). 
		\end{align*}
		When $\alpha_1<\alpha$ in Boltzmann case for hard potential, we regard $\na_v$ as derivative and add it into $\pa^\alpha_\beta$ with $|\alpha|+|\beta|\le 3$. Note that in this case, $|\cdot|_{L^2_v}\lesssim |\cdot|_{L^2_D}$. Then 
		\begin{align*}
			&\quad\,\sum_{\alpha_1<\alpha}\big|\big(\pa^{\alpha-\alpha_1}\nabla_x\phi\cdot \pa^{\alpha_1}\nabla_vf_\pm,e^{\pm\phi}w_{2l,2\nu}(2\alpha,0)\pa^\alpha f_\pm\big)_{L^2_{x,v}}\big|\\
			&\lesssim \sum_{\alpha_1<\alpha}\big\|\big(\pa^{\alpha-\alpha_1}\nabla_x\phi\cdot w_{l,\nu}(\alpha,0)\pa^{\alpha_1}\nabla_vf_\pm\big\|_{L^2_xL^2_v}\|w_{l,\nu}(\alpha,0)\pa^\alpha f_\pm\|_{L^2_{x,v}}\\
			&\notag\lesssim 
			\Big(\sum_{|\alpha-\alpha_1|=3}\|\pa^{\alpha-\alpha_1}\nabla_x\phi\|_{L^2_x} \|w_{l,\nu}(\alpha,0)\pa^{\alpha_1}\na_v f_\pm\|_{L^\infty_xL^2_v}\\&\notag\qquad
			+\sum_{|\alpha-\alpha_1|=2}\|\pa^{\alpha-\alpha_1}\nabla_x\phi\|_{L^6_x} \|w_{l,\nu}(\alpha,0)\pa^{\alpha_1}\na_v f_\pm\|_{L^3_xL^2_v}\\&\notag\qquad
			+\sum_{|\alpha-\alpha_1|=1}\|\pa^{\alpha-\alpha_1}\nabla_x\phi\|_{L^\infty_x} \|w_{l,\nu}(\alpha,0)\pa^{\alpha_1}\na_v f_\pm\|_{L^2_xL^2_v}\Big)\|w_{l,\nu}(\alpha,0)\pa^\alpha f_\pm\|_{L^2_{x,v}}\\
			&\notag\lesssim \|E\|_{H^3_x}\sum_{|\alpha_1|+|\beta_1|\le 3}\|w_{l,\nu}(\alpha_1,\beta_1)\pa^{\alpha_1}_{\beta_1} f_\pm\|_{L^2_xL^2_D}\|w_{l,\nu}(\alpha,0)\pa^\alpha f_\pm\|_{L^2_{x}L^2_D}\\
			&\lesssim \sqrt{\E_\nu(t)}\D_\nu(t). 
		\end{align*}
		When $\alpha_1<\alpha$ in Boltzmann case for soft potential, we consider 
		\begin{align}\label{8.9}
			&\quad\,\sum_{\alpha_1<\alpha}\big|\big(\pa^{\alpha-\alpha_1}\nabla_x\phi\cdot \pa^{\alpha_1}\nabla_vf_\pm,e^{\pm\phi}w_{2l,2\nu}(2\alpha,0)\pa^\alpha f_\pm\big)_{L^2_{x,v}}\big|.
		\end{align}
%
		By Young's inequality, $\<\eta\>\lesssim \<\eta\>^s\<v\>^k + \<\eta\>^{1+s}\<v\>^{-\frac{ks}{1-s}}$ for any $k\in\R$ and hence $\<\eta\>$ is a symbol in $S(\<\eta\>^s\<v\>^k + \<\eta\>^{1+s}\<v\>^{-\frac{ks}{1-s}})$ (cf. \cite{Deng2020a}), where $\eta$ is the Fourier variable of $v$. Then by \cite[Lemma 2.3 and Corollary 2.5]{Deng2020a}, we have 
		\begin{equation}\label{410}
			|f|_{H^1_v}
			\lesssim  |f \<v\>^k|_{H^s} + |f\<v\>^{-ks/(1-s)}|_{H^{1+s}},
		\end{equation}
	for $k\in\R$. 
		From our choice \eqref{pq}, we have 
		\begin{align*}
			0\le rs + (r-q)(1-s)+\gamma, 
		\end{align*}
	and hence, 
	\begin{align*}
		w_{l,\nu}(\al,\beta)\le \<v\>^\gamma w_{l,\nu}(\al-e_i,\beta)^sw_{l,\nu}(\al-e_i,\beta+e_i)^{1-s}. 
	\end{align*}
	Applying \eqref{410} with $\<v\>^k=w_{l,\nu}(\al-e_i,\beta)^{1-s}w_{l,\nu}(\al-e_i,\beta+e_i)^{-(1-s)}$, we have
	\begin{align*}\notag
		\sum_{|\al_1|<|\al|}\|\<v\>^{-\frac{\gamma}{2}}w_{l,\nu}(\al,\beta)\pa^{\al_1}_\beta \na_vf_\pm\|_{L^2_xL^2_v}
		&\lesssim \sum_{\al_1<\al}\|\<v\>^{\frac{\gamma}{2}}w_{l,\nu}(\al-e_i,\beta)\pa^{\al_1}_\beta f_\pm\|_{L^2_xH^s_v}
		\\&\notag\quad+\sum_{\al_1<\al}\|\<v\>^{\frac{\gamma}{2}}w_{l,\nu}(\al-e_i,\beta+e_i)\pa^{\al_1}_\beta f_\pm\|_{L^2_xH^{1+s}_v}\\
		&\lesssim \sqrt{\D_{\nu}(t)}. 
	\end{align*}
		Then the first right-hand term of \eqref{8.9} can be estimated by 
		\begin{align*}
			&\quad\,\sum_{\alpha_1<\alpha}\big\|\<v\>^{\frac{\gamma}{2}}\pa^{\alpha-\alpha_1}\nabla_x\phi\cdot 
			w_{l,\nu}(\alpha-e_i,0)\pa^{\alpha_1}\na_vf_\pm)\|_{L^2_xL^2_v}\|\<v\>^{\frac{\gamma}{2}}w_{l,\nu}(\alpha,0)\pa^\alpha f_\pm\|_{L^2_{x,v}}\big|\\
			&\notag\lesssim 
			\Big(
			\sum_{|\alpha-\alpha_1|=3}\|\pa^{\alpha-\alpha_1}\nabla_x\phi\|_{L^2_x} \|\<v\>^{-\frac{\gamma}{2}}w_{l,\nu}(\al,0)\pa^{\al_1} \na_v f_\pm\|_{L^\infty_xL^2_v}\\
			&\notag\quad
			+
			\sum_{|\alpha-\alpha_1|=2}\|\pa^{\alpha-\alpha_1}\nabla_x\phi\|_{L^6_x} \|\<v\>^{-\frac{\gamma}{2}}w_{l,\nu}(\al,0)\pa^{\al_1} \na_v f_\pm\|_{L^3_xL^2_v}\\
			&\notag\quad
			+\sum_{|\alpha-\alpha_1|=1}\|\pa^{\alpha-\alpha_1}\nabla_x\phi\|_{L^\infty_x} \|\<v\>^{-\frac{\gamma}{2}}w_{l,\nu}(\al,0)\pa^{\al_1} \na_v f_\pm\|_{L^2_xL^2_v}\Big)\\&\qquad\qquad\qquad\qquad\qquad\qquad\qquad\qquad\qquad\times\|\<v\>^{\frac{\gamma}{2}}w_{l,\nu}(\alpha,0)\pa^\alpha f_\pm\|_{L^2_{x,v}}\\
			&\notag\lesssim \|E\|_{H^3_x}\sum_{|\al_1|<|\al|}\|\<v\>^{-\frac{\gamma}{2}}w_{l,\nu}(\al,\beta)\pa^{\al_1}_\beta \na_vf_\pm\|_{L^2_xL^2_v}\|w_{l,\nu}(\alpha,0)\pa^\alpha f_\pm\|_{L^2_{x}L^2_D}\\
			&\lesssim \sqrt{\E_\nu(t)}\D_\nu(t).  
		\end{align*}
The above estimates give \eqref{7.5}. 
		The proof of \eqref{7.5a} is similar to \eqref{7.5} and we omit the details for brevity.

		\end{proof}

	Now we are ready to prove our Main Theorem \ref{Theorem_bounded}. 
	
	\begin{proof}[Proof of Theorem \ref{Theorem_bounded}]
		Let $|\alpha|+|\beta|\le 3$. If $|\alpha|=2$, we restrict $\alpha_i=2$ for some $i=1,2,3$. 
		Then applying $\pa^\alpha_\beta$ to \eqref{1}, we have 
		\begin{multline}\label{71}
			\partial_t(\pa^\alpha_\beta f_\pm) 
			+ \pa_\beta(v\cdot\nabla_x\pa^\alpha f_\pm) 
			\pm \frac{1}{2}\pa^\alpha_\beta (\nabla_x\phi\cdot vf_\pm) 
			\mp \pa^\alpha_\beta (\nabla_x\phi\cdot \nabla_vf_\pm)\\
			\pm \pa^\alpha_\beta(\nabla_x\phi\cdot v\mu^{1/2})
			- \pa_\beta L_\pm \pa^\alpha f 
			= \pa^\alpha_\beta\Gamma_{\pm}(f,f).
		\end{multline}
		
		\medskip \noindent{\bf Step 1. Estimate with Spatial Derivatives.}
		We begin with the following estimate. Taking integration by parts with respect to $\na_x$, we have 
		\begin{align*}\notag
			&\quad\,\big(v\cdot\nabla_x\pa^\alpha f_\pm,e^{\pm\phi}w_{2l,2\nu}(2\alpha,0)\pa^\alpha f_\pm\big)_{L^2_{x,v}}
			\pm \frac{1}{2}\big(\nabla_x\phi\cdot v\pa^{\alpha}f_\pm,e^{\pm\phi}w_{2l,2\nu}(2\alpha,0)\pa^\alpha f_\pm\big)_{L^2_{x,v}}\\
			&\notag= \frac{1}{2}\int_{\pa\Omega}\int_{\R^3}v\cdot n(x)e^{\pm\phi}|w_{l,\nu}(\alpha,0)\pa^\alpha f_\pm(v)|^2\,dvdS(x) \\
			&\notag= \frac{1}{2}\int_{\pa\Omega}\int_{\R^3}R_xv\cdot n(x)e^{\pm\phi}|w_{l,\nu}(\alpha,0)\pa^\alpha f_\pm(R_xv)|^2\,dvdS(x)\\
			&= -\frac{1}{2}\int_{\pa\Omega}\int_{\R^3}v\cdot n(x)e^{\pm\phi}|w_{l,\nu}(\alpha,0)\pa^\alpha f_\pm(v)|^2\,dvdS(x)=0. 
		\end{align*}
		where we apply Lemma \ref{Lem24} and $R_xv\cdot n(x)=-v\cdot n(x)$. This is what $e^{\pm\phi}$ designed for; cf. \cite{Guo2012}. 
		Then letting $|\beta|=0$ in \eqref{71} and taking inner product with $e^{\pm\phi}w_{2l,2\nu}(2\alpha,0)\pa^\alpha f_\pm$ of \eqref{71} over $\Omega\times\R^3$, we have 
		\begin{multline}\label{7.4}
			\frac{1}{2}\partial_t\|w_{l,\nu}(\alpha,0)\pa^\alpha f_\pm\|_{L^2_{x,v}}^2 \mp \frac{1}{2}\big(\pa_t\phi\,w_{l,\nu}(\alpha,0)\pa^\alpha f_\pm,e^{\pm\phi}w_{l,\nu}(\alpha,0)\pa^\alpha f_\pm\phi\big)_{L^2_{x,v}}\\
			\pm \frac{1}{2}\sum_{\alpha_1<\alpha}\Big(\pa^{\alpha-\alpha_1}\nabla_x\phi\cdot v\pa^{\alpha_1}f_\pm,e^{\pm\phi}w_{2l,2\nu}(2\alpha,0)\pa^\alpha f_\pm\Big)_{L^2_{x,v}}\\
			\mp \big(\pa^\alpha (\nabla_x\phi\cdot \nabla_vf_\pm),e^{\pm\phi}w_{2l,2\nu}(2\alpha,0)\pa^\alpha f_\pm\big)_{L^2_{x,v}}
			\pm \big(\pa^\alpha\nabla_x\phi\cdot v\mu^{1/2},e^{\pm\phi}w_{2l,2\nu}(2\alpha,0)\pa^\alpha f_\pm\big)_{L^2_{x,v}}\\
			- \big(L_\pm \pa^\alpha f,w_{2l,2\nu}(2\alpha,0)\pa^\alpha f_\pm\big)_{L^2_{x,v}}
			- \big(L_\pm \pa^\alpha f,(e^{\pm\phi}-1)w_{2l,2\nu}(2\alpha,0)\pa^\alpha f_\pm\big)_{L^2_{x,v}}\\
			= \big(\pa^\alpha\Gamma_{\pm}(f,f),e^{\pm\phi}w_{2l,2\nu}(2\alpha,0)\pa^\alpha f_\pm\big)_{L^2_{x,v}}.
		\end{multline}
		We denote the second to eighth term in \eqref{7.4} to be $I_1$ to $I_7$ and estimate them term by term. 
		For $I_1$, we have 
		\begin{align}\label{I1}
			|I_1|\lesssim \|\pa_t\phi\|_{L^\infty_x}\E_\nu(t). 
		\end{align}
		By Lemma \ref{Lem91}, we know that 
		\begin{align}\label{I2}
			|I_2|+|I_3|\lesssim \sqrt{\E_\nu(t)}\D_\nu(t)+\|\na_x\phi\|_{H^2_x}\E_\nu(t). 
		\end{align} 
		%
		%
		%
		For $I_4$, when $l=p|\alpha|$ and $\nu=0$, we take summation on $\pm$ to obtain 
		\begin{align*}
			\sum_\pm I_4 &= \big(\pa^\alpha\nabla_x\phi\cdot v\mu^{1/2},\pa^\alpha \{\I-\P\}(f_+-f_-)\big)_{L^2_{x,v}}\\
			&= \big(\pa^\alpha\nabla_x\phi,\pa^\alpha G\big)_{L^2_{x}},
		\end{align*}
		where $G$ is defined by \eqref{93a}.
		Similar to the proof of \eqref{Lem25a} and \eqref{38a}, we know that $\pa_{x_ix_i} G_i(x)=G_i(x)=0$ on $x\in\Gamma_i$ for $i=1,2,3$. 
		Also, by \eqref{Neumann} and \eqref{4.16}, we know that $\pa_{x_ix_ix_i}\phi=\pa_{x_i}\phi=0$ on $\Gamma_i$ for $i=1,2,3$. 
		Then by \eqref{98a}$_1$ and integration by parts, we have 
		\begin{align*}
			\big(\pa^\alpha\nabla_x\phi,\pa^\alpha G\big)_{L^2_{x}}
			&= -\big(\pa^\alpha\phi,\pa^\alpha \nabla_xG\big)_{L^2_{x}}
			= \big(\pa^\alpha\phi,\pa^\alpha\pa_t(a_+-a_-)\big)_{L^2_{x}}. 
		\end{align*}
		If $|\alpha|\le 1$, from \eqref{Neumann} we know that $\pa^\alpha\phi=0$ or $\pa^\alpha\pa_{x_j}\phi=0$ on $\Gamma_j$ for $j=1,2,3$. Then 
		\begin{align*}
			\big(\pa^\alpha\phi,\pa^\alpha\pa_t(a_+-a_-)\big)_{L^2_{x}}
			= \big(\pa^\alpha\phi,-\pa^\alpha\pa_t\Delta_x\phi\big)_{L^2_{x}}
			= \big(\pa^\alpha\na_x\phi,\pa^\alpha\pa_t\na_x\phi\big)_{L^2_{x}}
		\end{align*}
		If $|\alpha|=2$, then $\pa^\alpha=\pa_{x_ix_j}$ for some $i,j=1,2,3$. Since $\pa_{x_i}(a_+-a_-)=0$ and $\pa_{x_i}\phi=0$ on $\Gamma_i$, by integration by parts, we know that 
		\begin{align*}
			\big(\pa^\alpha\phi,\pa^\alpha\pa_t(a_+-a_-)\big)_{L^2_{x}}
			&= \big(\pa_{x_ix_ix_j}\phi,-\pa_{x_j}\pa_t(a_+-a_-)\big)_{L^2_{x}}\\
			&= \big(\pa_{x_ix_ix_j}\phi,\pa_{x_jx_ix_i}\pa_t\phi\big)_{L^2_{x}}+\sum_{k\neq i}\big(\pa_{x_ix_ix_j}\phi,\pa_{x_jx_kx_k}\pa_t\phi\big)_{L^2_{x}}\\
			&= \frac{1}{2}\pa_t\|\pa^\alpha \na_x\phi\|_{L^2_x}^2,
		\end{align*}
	where we used the following factsfrom \eqref{Neumann} and \eqref{4.16}. $\pa_{x_ix_jx_k}\phi=0$ or $\pa_{x_jx_k}\phi=0$ on $\Gamma_i$. $\pa_{x_jx_k}\phi=0$ or $\pa_{x_ix_ix_j}\phi=0$ on $\Gamma_k$ for $k\neq i$.
		
		\smallskip
		If $|\alpha|=3$, then $\pa^\alpha=\pa_{x_ix_jx_k}$ and we need more discussion. Note that we will apply \eqref{Neumann} and \eqref{4.16} frequently. 
		If $i,j,k$ are pairwise different, then $\pa_{x_ix_jx_k}\phi=0$ on $\Gamma_i$, $\Gamma_j$ and $\Gamma_k$. Thus, 
		\begin{align*}
			(\partial^\alpha {\phi} ,\pa_t\pa^\alpha(a_+-a_-))_{L^2_{x}}
			= (\partial^\alpha {\phi} ,-\pa_t\pa^\alpha\Delta_x\phi)_{L^2_{x}} = \frac{1}{2}\pa_t\|\pa^\alpha\na_x\phi\|^2_{L^2_x}.
		\end{align*}
		If $i=j\neq k$, then by $\pa_{x_ix_k}(a_+-a_-)=0$ on $\Gamma_i$ and $\pa_{x_ix_ix_ix_k}\phi=0$ on $\Gamma_k$, we have 
		\begin{align*}
			(\partial^\alpha {\phi} ,\pa_t\pa^\alpha(a_+-a_-))_{L^2_{x}}
			&= (\partial_{x_ix_ix_ix_kx_k} {\phi} ,\pa_t\pa_{x_i}(a_+-a_-))_{L^2_{x}}\\
			&= (\partial_{x_ix_ix_ix_kx_k} {\phi} ,-\pa_t\pa_{x_ix_ix_i}\phi)_{L^2_{x}}\\ 
			&\qquad+\sum_{m=k} (\partial_{x_ix_ix_ix_kx_k} {\phi} ,-\pa_t\pa_{x_ix_mx_m}\phi)_{L^2_{x}}\\ 
			&\qquad+\sum_{m\neq k,i} (\partial_{x_ix_ix_ix_kx_k} {\phi} ,-\pa_t\pa_{x_ix_mx_m}\phi)_{L^2_{x}}\\ 
			&= \frac{1}{2}\pa_t\|\pa^\alpha\na_x\phi\|^2_{L^2_x}.
		\end{align*}
		The last identity follows from suitable integration by parts. 
		Note that if $m=i$, we have $\pa_{x_ix_ix_ix_k}\phi=0$ on $\Gamma_k$. If $m=k$, we have $\pa_{x_ix_mx_m}\phi=0$ on $\Gamma_i$. If $m\neq k,i$, we have $\partial_{x_ix_ix_ix_k} {\phi}=0$ on $\Gamma_k$, $\pa_{x_ix_mx_mx_k}\phi=0$ on $\Gamma_i$ and $\pa_{x_ix_ix_mx_k}\phi=0$ on $\Gamma_m$. 
		
		If $i=j=k$, then by $\pa_{x_ix_ix_i}\phi=0$ on $\Gamma_i$ and $\pa_{x_ix_ix_ix_m}\phi=0$ on $\Gamma_m$ for $m\neq i$, we have 
		\begin{align*}
			(\partial^\alpha {\phi} ,\pa_t\pa^\alpha(a_+-a_-))_{L^2_{x}}
			&= (\pa_{x_ix_ix_ix_i} {\phi} ,-\pa_t\pa_{x_ix_i}(a_+-a_-))_{L^2_{x}}\\
			&= (\pa_{x_ix_ix_ix_i} {\phi} ,\pa_t\pa_{x_ix_ix_ix_i}\phi)_{L^2_{x}}\\&\qquad+\sum_{m\neq i}(\pa_{x_ix_ix_ix_i} {\phi} ,\pa_t\pa_{x_ix_ix_mx_m}\phi)_{L^2_{x}}\\
			&= \frac{1}{2}\pa_t\|\pa^\alpha\na_x\phi\|^2_{L^2_x}.
		\end{align*} 
		Combining the above estimate for $I_4$, we have that when $l=p|\alpha|$ and $\nu=0$, 
		\begin{align}\label{I4}
			\sum_\pm I_4 = \frac{1}{2}\pa_t\|\pa^\alpha E\|_{L^2_x}^2. 
		\end{align}	
		When $l\neq p|\alpha|$ or $\nu\neq 0$, we write an upper bound for $I_4$:
		\begin{align}\label{I4a}
			|I_4|\lesssim C_\eta\|\pa^\alpha E\|_{L^2_x}^2 + \eta\|w_{l,\nu}(\alpha,0)\pa^\alpha f_\pm\|_{L^2_xL^2_D}^2. 
		\end{align}
		For $I_5$, when $l=p|\alpha|$ and $\nu=0$, using Lemma \ref{Lem21}, we have 
		\begin{align}\label{I5}
			\sum_\pm I_5 \gtrsim \|\{\I-\P\}\pa^\alpha f\|_{L^2_xL^2_D}^2. 
		\end{align}
		When $l\neq p|\alpha|$ or $\nu\neq 0$, by \eqref{36c}, we have 
		\begin{align}\label{I5a}
			\sum_\pm I_5 \ge c_0|\pa^\alpha f|_{L^2_{D,w_{l,\nu}(\alpha,0)}}^2-C|\pa^\alpha f|_{L^2(B_C)}^2.
		\end{align}
		For $I_6$, note that $|e^{\pm\phi}-1|\lesssim \|\phi\|_{L^\infty_x}\lesssim \|\na_x\phi\|_{H^1_x}$. Then by \eqref{35a}, 
		\begin{align}\label{I6}
			|I_6|\lesssim \|\na_x\phi\|_{H^1_x}\big(\|\pa^\alpha f_\pm\|_{L^2_xL^2_D}\|\pa^\alpha f_\pm\|_{L^2_xL^2_v}+\|\pa^\alpha f_\pm\|_{L^2_xL^2_D}^2\big)\lesssim \sqrt{\E_\nu(t)}\D_\nu(t). 
		\end{align}
		The estimate of $I_7$ can be obtained from \eqref{36a} and it follows that 
		\begin{align}\label{I7}
			|I_7|\lesssim \sqrt{\E_\nu(t)}\D_\nu(t). 
		\end{align}
		Therefore, if $l=p|\alpha|$, $\nu=0$, plugging estimate \eqref{I1}, \eqref{I2}, \eqref{I4}, \eqref{I5}, \eqref{I6} and \eqref{I7} into \eqref{7.4}, we have the energy estimate without weight:
		\begin{multline}\label{alpha1}
			\frac{1}{2}\partial_t\Big(\|\pa^\alpha f_\pm\|_{L^2_{x,v}}^2+\|\pa^\alpha E\|_{L^2_x}^2\Big) 
			+\lambda\|\{\I-\P\}\pa^\alpha f\|_{L^2_xL^2_D}^2
			\\
			\lesssim\big(\|\pa_t\phi\|_{L^\infty_x}+\|\na_x\phi\|_{H^2_x}\big)\E(t)+\sqrt{\E(t)}\D(t),
		\end{multline}	for some constant $\lambda>0$. 
		If $l\neq p|\alpha|$ or $\nu\neq 0$, plugging estimate \eqref{I1}, \eqref{I2}, \eqref{I4a}, \eqref{I5a}, \eqref{I6} and \eqref{I7} into \eqref{7.4} and letting $\eta>0$ small enough, we have the energy estimate with weight:
		\begin{multline}\label{alpha2}
			\frac{1}{2}\partial_t\|w_{l,\nu}(\alpha,0)\pa^\alpha f_\pm\|_{L^2_{x,v}}^2 
			+\lambda\|\pa^\alpha f\|_{L^2_xL^2_{D,w_{l,\nu}(\alpha,0)}}^2\\
			\lesssim\big(\|\pa_t\phi\|_{L^\infty_x}+\|\na_x\phi\|_{H^2_x}\big)\E_\nu(t)+\sqrt{\E_\nu(t)}\D_\nu(t)+\|\pa^\alpha f\|_{L^2_xL^2_{D}}^2+\|\pa^\alpha E\|_{L^2_x}^2,
		\end{multline}	for some constant $\lambda>0$.

		\medskip \noindent{\bf Step 2. Estimate with Mixed Derivatives.}
		Let $1\le|\beta|\le 3$, then $|\alpha|\le 2$. 
		From Lemma \ref{Lem24}, we have $\pa^\alpha f(R_xv)$ equal to $\pa^\alpha f(v)$ or $-\pa^\alpha f(v)$, which implies that $\pa^\alpha_\beta f(R_xv)$ equal to $\pa^\alpha_\beta f(v)$ or $-\pa^\alpha_\beta f(v)$, since $R_xv$ maps $v_i$ to $-v_i$ for some $i=1,2,3$ and derivatives on velocity variable would produce only sign $\pm$. Then we have 
		\begin{align*}
			&\quad\,	\big(v\cdot\nabla_x\pa^\alpha_{\beta} f_\pm,e^{\pm\phi}w_{2l,2\nu}(2\alpha,2\beta)\pa^\alpha_\beta f_\pm\big)_{L^2_{x,v}}
			\pm \frac{1}{2}\big(\nabla_x\phi\cdot v\pa^{\alpha}_{\beta} f_\pm,e^{\pm\phi}w_{2l,2\nu}(2\alpha,2\beta)\pa^\alpha_\beta f_\pm\big)_{L^2_{x,v}}\\
			&= \frac{1}{2}\int_{\pa\Omega}\int_{\R^3}v\cdot n(x)|w_{l,\nu}(\alpha,\beta)\pa^\alpha_\beta f_\pm(v)|^2\,dvdS(x)\\
			&\notag= \frac{1}{2}\int_{\pa\Omega}\int_{\R^3}R_xv\cdot n(x)|w_{l,\nu}(\alpha,\beta)\pa^\alpha_\beta f_\pm(R_xv)|^2\,dvdS(x)\\
			&= -\frac{1}{2}\int_{\pa\Omega}\int_{\R^3}v\cdot n(x)|w_{l,\nu}(\alpha,\beta)\pa^\alpha_\beta f_\pm(v)|^2\,dvdS(x)=0.
		\end{align*}	
		Taking inner product with $w_{2l,2\nu}(2\alpha,2\beta)\pa^\alpha_\beta f_\pm$ of \eqref{71} over $\Omega\times\R^3$, we have 
		\begin{multline}\label{7.13}
			\frac{1}{2}\partial_t\|w_{l,\nu}(\alpha,\beta)\pa^\alpha_\beta f_\pm\|_{L^2_{x,v}}^2 
			\mp\big(\pa_t\phi\, \pa^\alpha_\beta f_\pm,e^{\pm\phi}w_{2l,2\nu}(2\alpha,2\beta)\pa^\alpha_\beta f_\pm\big)_{L^2_{x,v}}\\
			+	\sum_{|\beta_1|=1}\big(\pa_{\beta_1} v\cdot\nabla_x\pa^\alpha_{\beta-\beta_1} f_\pm,e^{\pm\phi}w_{2l,2\nu}(2\alpha,2\beta)\pa^\alpha_\beta f_\pm\big)_{L^2_{x,v}}\\
			\pm \frac{1}{2}\sum_{\alpha_1+\beta_1<\alpha+\beta}\Big(\pa^{\alpha}_\beta(\nabla_x\phi\cdot vf_\pm),e^{\pm\phi}w_{2l,2\nu}(2\alpha,2\beta)\pa^\alpha_\beta f_\pm\Big)_{L^2_{x,v}}\\
			\mp \big(\pa^\alpha_\beta (\nabla_x\phi\cdot \nabla_vf_\pm),e^{\pm\phi}w_{2l,2\nu}(2\alpha,2\beta)\pa^\alpha_\beta f_\pm\big)_{L^2_{x,v}}\\
			\pm \big(\pa^\alpha_\beta(\nabla_x\phi\cdot v\mu^{1/2}),e^{\pm\phi}w_{2l,2\nu}(2\alpha,2\beta)\pa^\alpha_\beta f_\pm\big)_{L^2_{x,v}}\\
			- \big(\pa_\beta L_\pm \pa^\alpha f,w_{2l,2\nu}(2\alpha,2\beta)\pa^\alpha_\beta f_\pm\big)_{L^2_{x,v}}\\
			- \big(\pa_\beta L_\pm \pa^\alpha f,(e^{\pm\phi}-1)w_{2l,2\nu}(2\alpha,2\beta)\pa^\alpha_\beta f_\pm\big)_{L^2_{x,v}}\\
			= \big(\pa^\alpha_\beta\Gamma_{\pm}(f,f),e^{\pm\phi}w_{2l,2\nu}(2\alpha,2\beta)\pa^\alpha_\beta f_\pm\big)_{L^2_{x,v}}.
		\end{multline}
		Denote the second to ninth terms of \eqref{7.13} to be $J_1$ to $J_8$. Then for $J_1$, we have 
		\begin{align*}
			|J_1|\lesssim \|\pa_t\phi\|_{L^\infty_x}\E_\nu(t). 
		\end{align*}
		%
		For $J_2$, note that for hard potential case, we have $|\cdot|_{L^2_v}\lesssim |\cdot|_{L^2_D}$ and $w_{l,\nu}(\alpha,\beta)=w_{l,\nu}(\alpha+e_i,\beta-\beta_1)$. For soft potential case, we have $w_{l,\nu}(\alpha,\beta)\le\<v\>^{\gamma}w_{l,\nu}(\alpha+e_i,\beta-\beta_1)$ for any $|\beta_1|=1$, where we let $s=1$ in Landau case. Then
		\begin{align*}
			|J_2|&=\sum_{|\beta_1|=1}\big|\big(\pa_{\beta_1}v\cdot\nabla_x\pa^\alpha_{\beta-\beta_1}f_\pm,e^{\pm\phi}w_{2l,2\nu}(2\alpha,2\beta)\pa^\alpha_\beta f_\pm\big)_{L^2_{x,v}}\big|\\
			&\lesssim \sum_{|\beta_1|=1} \int_{\Omega\times\R^3}\<v\>^{\gamma+2s}w_{l,\nu}(\alpha+e_i,\beta-\beta_1)|\na_x\pa^\alpha_{\beta-\beta_1}f_\pm|w_{l,\nu}(\alpha,\beta)|\pa^\alpha_\beta f_\pm|\,dxdv\\
			&\lesssim C_\eta\sum_{i=1}^3\sum_{|\beta_1|=1}\|\pa^{\alpha+e_i}_{\beta-\beta_1}f_\pm\|_{L^2_xL^2_{D,w_{l,\nu}(\alpha+e_i,\beta-\beta_1)}}^2+\eta\,\|\pa^\alpha_\beta f_\pm\|_{L^2_xL^2_{D,w_{l,\nu}(\alpha,\beta)}}^2. 
		\end{align*}
		By Lemma \ref{Lem91}, we have 
		\begin{align*}
			|J_3|+|J_4|&\lesssim \sqrt{\E_\nu(t)}\D_\nu(t) +\|\na_x\phi\|_{H^2_x}\E_\nu(t). 
		\end{align*}
		%
		For $J_5$, we write an upper bound:
		\begin{align*}
			|J_5|\lesssim C_\eta\|\pa^\alpha E\|_{L^2_x}^2 + \eta\|w_{l,\nu}(\alpha,\beta)\pa^\alpha_\beta f_\pm\|_{L^2_xL^2_D}^2. 
		\end{align*}
		For $J_6$, by \eqref{36cw}, we have 		
		\begin{align*}
			\sum_\pm J_6 &\ge c_0\|\pa_\beta^\alpha  f\|_{L^2_xL^2_{D,w_{l,\nu}(\alpha,\beta)}}^2
			- C\sum_{|\beta_1|<|\beta|}\|\pa^\alpha_{\beta_1} f\|_{L^2_xL^2_{D,w_{l,\nu}(\alpha,\beta_1)}}^2 -C\|\pa^\alpha f\|_{L^2_xL^2(B_C)}^2.
		\end{align*}
		The estimate for $J_7$ is similar to \eqref{I6} and we have 
		\begin{align*}
			|J_7|\lesssim \sqrt{\E_\nu(t)}\D_\nu(t).
		\end{align*}
		For $J_8$, by \eqref{37a}, we have 
		\begin{align*}
			|J_8|\lesssim \sqrt{\E_\nu(t)}\D_\nu(t). 
		\end{align*}	
		Combining the above estimates on $J_k$ $(1\le k\le 8)$ and letting $\eta>0$ small enough, we have from \eqref{7.13} that 
		\begin{multline}\label{beta}
			\,\frac{1}{2}\partial_t\|w_{l,\nu}(\alpha,\beta)\pa^\alpha_\beta f_\pm\|_{L^2_{x,v}}^2 
			+\lambda\|\pa_\beta^\alpha  h\|_{L^2_xL^2_{D,w_{l,\nu}(\alpha,\beta)}}^2
			\lesssim \|\pa_t\phi\|_{L^\infty_x}\E_\nu(t)+\sqrt{\E_\nu(t)}\D_\nu(t)\\
			+ C\sum_{|\beta_1|<|\beta|}\|\pa^\alpha_{\beta_1} h\|_{L^2_xL^2_{D,w_{l,\nu}(\alpha,\beta_1)}}^2
			+\sum_{i=1}^3\sum_{|\beta_1|=1}\|\pa^{\alpha+e_i}_{\beta-\beta_1}f_\pm\|_{L^2_xL^2_{D,w_{l,\nu}(\alpha+e_i,\beta-\beta_1)}}^2+\|\pa^\alpha E\|^2_{L^2_x}.
		\end{multline}	
		for some $\lambda>0$. 
		
		\medskip\noindent{\bf Step 3. Energy Estimate.}
		From Theorem \ref{Thm41} with $K=3$, there exists $\E_{int}(t)$ satisfying 
		\begin{align}\label{7.19}
			\E_{int}(t)\lesssim \sum_{|\alpha|\le 3} \|\partial^\alpha f\|_{L^2_{x}},
		\end{align} such that 
		\begin{multline}\label{7.20}
			\partial_t\E_{int}(t) + \lambda \sum_{|\alpha|\le 3} \|\partial^\alpha[{a_+},{a_-},{b},{c}]\|^2_{L^2_{x}}+\lambda \sum_{|\alpha|\le 3}\|{\partial^\alpha E}\|^2_{L^2_{x}}\\
			\lesssim \sum_{|\alpha|\le 3}\|\{\I-\P\}{\partial^\alpha f}\|^2_{L^2_{x}L^2_D} + \sum_{|\alpha|\le 3}\|(\partial^\alpha g,\zeta(v))_{L^2_v}\|_{L^2_x}^2+ \|E\|_{L^2_x}^4,
		\end{multline}for some constant $\lambda>0$. 
		Also, by \eqref{4.58}, we know that 
		\begin{align}\label{7.21}
			\|({\partial^\alpha g},\zeta)_{L^2_v}\|^2_{L^2_{x}}+ \|E\|_{L^2_x}^4
			&\lesssim \E_\nu(t)\D_\nu(t). 
		\end{align}
		Taking linear combination $\sum_{|\alpha|\le 3}\eqref{alpha1}+\kappa\times\eqref{7.20}$ with $\kappa>0$ small enough and applying \eqref{7.21}, we have 
		\begin{multline}\label{7.22}
			\frac{1}{2}\partial_t\sum_{|\alpha|\le 3}\Big(\|\pa^\alpha f_\pm\|_{L^2_{x,v}}^2+\|\pa^\alpha E\|_{L^2_x}^2\Big)+\kappa\pa_t\E_{int}(t) 
			+\lambda\sum_{|\alpha|\le 3}\|\pa^\alpha f\|_{L^2_xL^2_D}^2
			+\lambda \sum_{|\alpha|\le 3}\|{\partial^\alpha E}\|^2_{L^2_{x}}\\
			\lesssim\big(\|\pa_t\phi\|_{L^\infty_x}+\|\na_x\phi\|_{H^2_x}\big)\E(t)+(\sqrt{\E_\nu(t)}+\E_\nu(t))\D_\nu(t).
		\end{multline}
		Then taking linear combination $\eqref{7.22}+\kappa\sum_{|\alpha|\le 3}\eqref{alpha2}+\kappa^2\sum_{1\le|\beta|\le 3, \,|\alpha|\le 3-|\beta|}\kappa_{|\beta|}\times\eqref{beta}$ with $0<\kappa_3\ll\kappa_2\ll\kappa_1\ll\kappa$ and $\delta$ small enough, we have 
		\begin{align}\label{8.24}
			\pa_t\E_\nu(t) + \lambda\D_\nu(t) \lesssim \big(\|\pa_t\phi\|_{L^\infty_x}+\|\na_x\phi\|_{H^2_x}\big)\E_\nu(t)+ (\sqrt{\E_\nu(t)}+\E_\nu(t))\D_\nu(t),
		\end{align}
		for any $\nu\ge 0$, where $\E_\nu(t)$ is given by 
		\begin{align*}
			\E_\nu(t) &= \frac{1}{2}\sum_{|\alpha|\le 3}\Big(\|\pa^\alpha f_\pm\|_{L^2_{x,v}}^2+\|\pa^\alpha E\|_{L^2_x}^2\Big)+\kappa\E_{int}(t)\\
			&\qquad+\frac{\kappa}{2}\sum_{|\alpha|\le 3}\Big(\|w_{l,\nu}(\alpha,0)\pa^\alpha f_\pm\|_{L^2_{x,v}}^2+\|\pa^\alpha E\|_{L^2_x}^2\Big)\\
			&\qquad+\frac{\kappa^2}{2}\sum_{1\le|\beta|\le 3, \,|\alpha|\le 3-|\beta|}\kappa_{|\beta|}\|w_{l,\nu}(\alpha,\beta)\pa^\alpha_\beta f_\pm\|_{L^2_{x,v}}^2,
		\end{align*}
		and $\D_\nu(t)$ is given by \eqref{D}. It's direct to check that $\E_\nu(t)$ satisfies \eqref{E1} by using \eqref{7.19}. 
		Thus, using the {\it a priori} assumption \eqref{priori}, \eqref{8.24} becomes 
		\begin{align}\label{8.33}
			\pa_t\E_\nu(t) + \lambda\D_\nu(t) \le \big(\|\pa_t\phi\|_{L^\infty_x}+\|\na_x\phi\|_{H^2_x}\big)\E_\nu(t). 
		\end{align}
		
		For the hard potential case, we have $\|w_{l,\nu}(\alpha,\beta)\pa^\alpha_\beta f_\pm\|^2_{L^2_{x,v}}\lesssim\|w_{l,\nu}(\alpha,\beta)\pa^\alpha_\beta f_\pm\|^2_{L^2_{x}L^2_D}$ and hence $\E(t)\lesssim \D(t)$. 
		Notice from \eqref{Neumann} that $\pa_{x_i}\phi=0$ on $\Gamma_i$. Then by Sobolev embedding \cite[Theorem 6.7-5]{Ciarlet2013}, we have 
		$\|\pa_t\pa_{x_i}\phi\|_{L^2_x}\lesssim \|\pa_t\na_x\pa_{x_i}\phi\|_{L^2_x}$
		and hence, 
		\begin{multline}\label{8.34}
			\|\pa_t\phi\|_{L^\infty_x}\lesssim \|\pa_t\na_x\phi\|_{H^1_x}\lesssim \|\pa_t\na_x^2\phi\|_{L^2_x} = \|\pa_t\Delta_x\phi\|_{L^2_x} \\=\|\pa_t(a_+-a_-)\|_{L^2_x} \lesssim \|\na_xG\|_{L^2_x}\lesssim\|\na_x\{\I-\P\}f\|_{L^2_xL^2_D}\lesssim \sqrt{\E(t)}. 
		\end{multline}
		Here, the first inequality follows from Sobolev inequality; cf. \cite{Adams2003}. The first identity follows from boundary value $\pa_{x_ix_j}\phi=0$ on $\Gamma_i$ and $\Gamma_j$ for $j\neq i$. The second identity is from \eqref{1}$_2$. The third inequality comes from \eqref{98a}$_1$. 
		Thus, when $\nu=0$, \eqref{8.33} becomes 
		\begin{align*}
			\pa_t\E(t) + \lambda\E(t) \le \sqrt{\E(t)}\E(t). 
		\end{align*}
		Under the smallness \eqref{small}, we have 
		\begin{align}\label{8.25a}
			\pa_t\E(t) + \delta\E(t) \le 0,
		\end{align}
		and hence
		\begin{align}\label{8.25b}
			\E(t)\le e^{\delta t}\E(0),
		\end{align}
		for some constant $\delta>0$. This close the {\it a priori} assumption \eqref{priori} by choosing $\ve_0$ in \eqref{small} small enough for hard potential case.

		For soft potential, we need more calculations. Recall definition \eqref{8.2} for $X(t)$ and assume \eqref{priori}. 
		Then from \eqref{8.34} and \eqref{priori}, we have $e^{\delta t^p/2}\big(\|\pa_t\phi\|_{L^\infty_x}+\|\na_x\phi\|_{H^2_x}\big)\lesssim \sqrt{X(t)}\lesssim \sqrt{\delta_0}$, for some $\delta>0$. 
		Solving \eqref{8.33}, we have
		\begin{align}\label{8.26}
			\E_\nu(t)\lesssim \E_\nu(0)e^{-\int^t_0\big(\|\pa_t\phi\|_{L^\infty_x}+\|\na_x\phi\|_{H^2_x}\big)\,d\tau}\lesssim \E_\nu(0)\lesssim \ve_0. 
		\end{align}
		Next we claim that for $T>0$, 
		\begin{align}
			\label{8.26a}
			\sup_{0\le t\le T}e^{\delta t^p}\E(T)\lesssim \ve_0+X^{3/2}(t).
		\end{align}
		Indeed, as in \cite{Strain2007, Duan2013}, for $p'>0$ to be chosen depending on $p$, we define 
		\begin{equation*}
			\mathbf{E} = \{\<v\>\le  t^{p'}\}, \quad 	\mathbf{E}^c = \{\<v\>>  t^{p'}\}.
		\end{equation*}
		Corresponding to this splitting, we define $\E^{low}(t)$ to be the restriction of $\E(t)$ to $\mathbf{E}$ and similarly $\E^{high}(t)$ to be the restriction of $\E(t)$ to $\mathbf{E}^c$. 
		We define $p' = \frac{p-1}{\gamma+2s}$ for Boltzmann case and $p'=\frac{p-1}{\gamma+2}$ for Landau case. Then on $\mathbf{E}$, we have 
		$	t^{p-1} \le \<v\>^{\frac{p-1}{p'}}$, 
		and hence $t^{p-1}\E^{low}(t)\lesssim \D(t)$. It follows from \eqref{8.33} with $\nu=0$ that 
		\begin{equation*}
			\pa_t\E(t) + \lambda t^{p-1}\E(t) \lesssim\big(\|\pa_t\phi\|_{L^\infty_x}+\|\na_x\phi\|_{H^2_x}\big)\E(t) + \lambda t^{p-1}\E^{high}(t). 
		\end{equation*}
		By solving this ODE, we have 
		\begin{equation}\label{8.36}
			\E(t) \lesssim e^{\lambda t^p}\E(0)+\int^t_0e^{-\lambda (t^p-\tau^p)}\big(\big(\|\pa_\tau\phi(\tau)\|_{L^\infty_x}+\|\na_x\phi(\tau)\|_{H^2_x}\big)\E(\tau) + \lambda t^{p-1}\E^{high}(\tau)\big)\,d\tau. 
		\end{equation}
		In what follows we estimate the terms in the time integral on the right-hand side of \eqref{8.36}. 
		Firstly, by \eqref{8.34} and \eqref{8.2}, we have 
		\begin{equation*}
			\big(\|\pa_t\phi\|_{L^\infty_x}+\|\na_x\phi\|_{H^2_x}\big)\E(t)\lesssim \E^{\frac{3}{2}}(t)\lesssim e^{-\frac{3}{2}\delta t^p}X^{\frac{3}{2}}(t). 
		\end{equation*}
		On the other hand, choose $p = p'\vt$, i.e. choose $p$ satisfying \eqref{p}. Then on $\mathbf{E}^c$, we have 
		\begin{equation*}
			 e^{-{\nu\<v\>}} \le e^{-{\nu t^{p'\vt}}} = e^{-{\nu t^{p}}}, 
		\end{equation*}
	Recalling the exponential weight in \eqref{w22}, by \eqref{8.26}, we have 
		\begin{equation*}
			\E^{high}(t)\lesssim e^{-{\nu t^{p}}}\E_{\nu}(t)\lesssim e^{-{\nu t^{p}}}\ve_0.
		\end{equation*}
		Plugging the above estimates into \eqref{8.36}, we have 
		\begin{equation*}
			\E(t)\lesssim e^{\lambda t^p}\E(0) + \int^t_0e^{-\lambda (t^p-\tau^p)}\big(e^{-\frac{3}{2}\delta \tau^p}X^{\frac{3}{2}}(\tau)+\ve_0e^{-{\nu \tau^{p}}}\big)\,d\tau\lesssim e^{\delta t^p}(\ve_0+X^{\frac{3}{2}}(t)), 
		\end{equation*}
		by choosing $\lambda< \nu$. 
		This completes the claim \eqref{8.26a}. Recalling definition \eqref{8.2} for $X(t)$ and using \eqref{8.26} and \eqref{8.26a}, we have 
		\begin{equation*}
			X(t) \lesssim \ve_0+X^{\frac{3}{2}}(t). 
		\end{equation*}
		Then choosing $\delta_0$ in \eqref{priori} sufficiently small, we have the {\em a priori} estimate:
		\begin{equation}\label{8.25c}
			X(t) \lesssim \ve_0. 
		\end{equation}

		With \eqref{8.25a}, \eqref{8.25b} and \eqref{8.25c} in hand, under the smallness of \eqref{small}, it's now standard to apply the continuity argument with local existence from Section \ref{Sec_loc_bounded} to obtain the global existence, uniqueness and large time decay for initial boundary problem \eqref{1}, \eqref{specular} and  \eqref{Neumann} in bounded domain $\Omega$. The positivity of the solutions can be obtained from \cite[Lemma 12, page 800]{Guo2012} for VPL systems and \cite[page 1121]{Guo2002} for VPB systems.
		This complete the proof of Theorem \ref{Theorem_bounded}.

		\end{proof}

	\section{Local Existence}\label{Sec_loc_bounded}
	
	In this section, we are concerned with the local-in-time existence of solutions to problem \eqref{1} in union of cubes. For brevity, we only consider the proof of Vlasov-Poisson-Landau systems when $\gamma\ge -3$, since the Vlasov-Poisson-Boltzmann case is similar. 
	\begin{Thm}\label{blocalsolution}
		Let $\gamma\ge -3$, $\Omega$ be given by \eqref{Omega} and $w_{l,\nu}(\alpha,\beta)$ be given by \eqref{w22}. Then there exists $\ve_0>0$, $T_0>0$ such that if $F_0(x,v)=\mu+\mu^{1/2}f_0(x,v)\ge 0$ and 
		\begin{align*}
			\sum_{|\alpha|+|\beta|\le 3}\big(\|{w_{l,\nu}(\alpha,\beta)\partial^\alpha_\beta f_0}\|^2_{L^2_{x,v}}+\|{\partial^\alpha E_0}\|^2_{L^2_{x}}\big)\le\ve_0,
		\end{align*}
		then the specular reflection boundary problem for VPL systems \eqref{1},  \eqref{specular} and \eqref{Neumann} admits a unique solution $f(t,x,v)$ on $t\in[0,T_0]$, $x\in\Omega$, $v\in\R^3$, satisfying the uniform estimate 
		\begin{align}
			\label{b169}\sup_{0\le t\le T_0}\E_\nu(t)+\int^{T_0}_0\D_\nu(t)\,dt\lesssim  \sum_{|\alpha|+|\beta|\le 3}\big(\|{w_{l,\nu}(\alpha,\beta)\partial^\alpha f_0}\|^2_{L^2_{x,v}}+\|{\partial^\alpha E_0}\|^2_{L^2_{x}}\big),
		\end{align}
		where $\E_\nu(t)$, $\D_\nu(t)$ are defined by \eqref{E1} and \eqref{D} respectively. 
	\end{Thm}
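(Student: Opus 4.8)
The plan is to construct the solution by a linear iteration and then pass to the limit, using the energy estimates already established in Sections \ref{Sec_pre}--\ref{Sec_Main}. Set $f^0\equiv 0$ (so $F^0=\mu$, $E^0\equiv0$) and define $f^{n+1}=[f^{n+1}_+,f^{n+1}_-]$ inductively as the solution of the \emph{linear} problem
\[
\pa_t f^{n+1}_\pm + v\cdot\na_x f^{n+1}_\pm \pm\tfrac12\na_x\phi^n\cdot v\, f^{n+1}_\pm \mp \na_x\phi^n\cdot\na_v f^{n+1}_\pm - L_\pm f^{n+1} = \mp \na_x\phi^n\cdot v\mu^{1/2} + \Gamma_\pm(f^n,f^{n+1}),
\]
with initial datum $f_0$, the specular reflection condition \eqref{specular} for $f^{n+1}$, and $\phi^n$ the solution of the Neumann problem $-\Delta_x\phi^n=a^n_+-a^n_-$ with $\pa_n\phi^n=0$ and $\int_\Omega\phi^n\,dx=0$. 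Keeping the full $L_\pm=-A_\pm+K_\pm$ and the second argument of $\Gamma_\pm$ implicit makes each step a linear kinetic equation -- transport in $x$, and (anisotropically) elliptic in $v$ through the coercive part $A_\pm$ -- whose well-posedness with specular reflection follows by a standard further regularization: add $\ve\Delta_x$ with a compatible Neumann-type condition on each face, solve the resulting uniformly parabolic problem by a Galerkin scheme, derive $\ve$-uniform estimates as in Steps 1--2 of the proof of Theorem \ref{Theorem_bounded}, and let $\ve\to0$. Since $f^{n+1}$ solves an equation of the form \eqref{4.1}--\eqref{4.4}, Lemmas \ref{Lem24}, \ref{Lem25} and \ref{Lem26} apply to it verbatim, so all compatible high-order specular boundary conditions hold and the boundary integrals in the energy estimates vanish.

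Next I would establish a uniform bound on a short interval. Repeating the weighted energy estimates of Steps 1--2 of the proof of Theorem \ref{Theorem_bounded} for the linear equation satisfied by $f^{n+1}$, together with the macroscopic estimate of Theorem \ref{Thm41} with $K=3$ and the bound \eqref{4.58} for the source terms (the only change being that the quadratic terms now carry the frozen argument $f^n$, costing a factor $\sqrt{\E_\nu[f^n]}$), yields an inequality of the form
\[
\frac{d}{dt}\E_\nu[f^{n+1}](t) + \lambda\,\D_\nu[f^{n+1}](t) \lesssim \big(1+\sqrt{\E_\nu[f^n]}+\sqrt{\E_\nu[f^{n+1}]}\big)\,\D_\nu[f^{n+1}] + \big(1+\E_\nu[f^n]\big)\,\E_\nu[f^{n+1}].
\]
Assuming inductively $\sup_{[0,T_0]}\E_\nu[f^n]\le 2M\ve_0$ with $\ve_0$ small enough that the $\D_\nu[f^{n+1}]$-terms are absorbed on the left, Gronwall's inequality gives $\E_\nu[f^{n+1}](t)\lesssim\ve_0 e^{Ct}$ and $\int_0^{T_0}\D_\nu[f^{n+1}]\lesssim\ve_0$; choosing $T_0$ small closes the induction and, in the limit, produces \eqref{b169}.

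Then I would show that $\{f^n\}$ is Cauchy in the low-order norm $\mathcal{N}_n(t):=\sup_{[0,t]}\|f^{n+1}-f^n\|_{L^2_xL^2_v}^2 + \int_0^t\|f^{n+1}-f^n\|_{L^2_xL^2_D}^2\,d\tau$. Writing the equation for $g^n:=f^{n+1}-f^n$ -- which has the same transport/diffusion structure and is forced by terms linear in $g^{n-1}$ and in $\phi^n-\phi^{n-1}$, the latter controlled via elliptic regularity for the Neumann problem by $\|a^n_\pm-a^{n-1}_\pm\|_{L^2_x}\lesssim\|g^{n-1}\|_{L^2_xL^2_v}$ -- the basic energy identity without weights and without derivatives (whose boundary term again vanishes by \eqref{specular}) gives $\mathcal{N}_n(T_0)\le(CT_0+C\ve_0)\,\mathcal{N}_{n-1}(T_0)$, a contraction for $T_0,\ve_0$ small. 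Hence $f^n\to f$ strongly in the low-order norm and, by the uniform bound, weakly-$*$ in the energy space of \eqref{E1}; lower semicontinuity yields \eqref{b169}. Passing to the limit in the weak formulation -- using the strong convergence for the quadratic terms and the trace afforded by the transport structure to preserve \eqref{specular} -- shows $f$ is a solution; uniqueness follows from the same contraction, and the positivity $F=\mu+\mu^{1/2}f\ge0$ is obtained as in \cite[Lemma 12, page 800]{Guo2012}.

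The main obstacle is the well-posedness together with the \emph{weighted} energy estimate for the linear kinetic equation under specular reflection: one must check that, after multiplying by $w_{l,\nu}(\alpha,\beta)^2$ and $e^{\pm\phi^n}$ and applying each $\pa^\alpha_\beta$ with $|\alpha|+|\beta|\le3$, every surface integral over $\partial\Omega$ cancels. This is exactly where Lemmas \ref{Lem24}--\ref{Lem26}, the union-of-cubes structure of $\Omega$, and the Neumann condition \eqref{Neumann} for $\phi^n$ (hence $\pa_{x_ix_ix_i}\phi^n=0$ on $\Gamma_i$) are indispensable, the last being needed to control the forcing $\na_x\phi^n\cdot v\mu^{1/2}$ and its derivatives up to the boundary. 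The auxiliary regularizations -- the anisotropic $v$-ellipticity of the Landau operator and the artificial viscosity in $x$ -- must be chosen compatibly with these boundary conditions so that all uniform estimates survive both limits.
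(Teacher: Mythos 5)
Your outline shares the overall architecture of the paper's proof (linear iteration, uniform energy estimates, low-order contraction, passage to the limit), but the iteration scheme you wrote down differs from the paper's in a way that creates a genuine gap. You freeze the potential in the forcing term, writing $\mp\na_x\phi^n\cdot v\mu^{1/2}$ on the right-hand side. The paper instead freezes $\phi^n$ \emph{only in the transport terms} $\pm\tfrac12\na_x\phi^n\cdot vf^{n+1}_\pm$, $\mp\na_x\phi^n\cdot\na_vf^{n+1}_\pm$, and keeps $\pm\na_x\phi^{n+1}\cdot v\mu^{1/2}$ (with $-\Delta_x\phi^{n+1}=a_+^{n+1}-a_-^{n+1}$) coupled to the unknown. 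This is exactly the structure of the auxiliary linear problem \eqref{b154}, where $\psi$ is the frozen potential and $\phi$ is self-consistent in terms of the unknown $f$. That coupling is not cosmetic: it is required to apply the macroscopic estimate Theorem \ref{Thm41} (which you invoke), because the key identity in that proof, $\sum_\pm S_4 = -10\|\pa^\alpha\na_x\phi\|^2_{L^2_x}$ for $a_+-a_-$, and likewise the cancellation $\sum_\pm I_4 = \tfrac12\pa_t\|\pa^\alpha E\|^2_{L^2_x}$ used in Step~1 of the global proof, all hinge on $-\Delta_x\phi=a_+-a_-$ with $\phi$ tied to the same $f$ being tested against. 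With $\phi^n$ frozen in the forcing, these identities become cross terms in $E^n$ and $f^{n+1}$ that do not close into a time derivative, and there is no way to recover the $\|\pa^\alpha E\|^2_{L^2_x}$ contribution to $\D_\nu$ from your scheme. For hard potentials one could bypass this via $\|\pa^\alpha E^{n+1}\|\lesssim\|\pa^\alpha a^{n+1}\|\lesssim\|\pa^\alpha f^{n+1}\|_{L^2_xL^2_D}$, but for soft potentials $L^2_v\not\subset L^2_D$, so the $E$-part (and the macroscopic part) of $\D_\nu$ is genuinely lost and \eqref{b169} cannot be established.

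Two smaller deviations worth noting. First, the paper places the bounded piece on the previous iterate, writing $-A_\pm f^{n+1}=\dots+Kf^n$, so that the operator acting on $f^{n+1}$ is the coercive $A_\pm$ alone; you keep the full $-L_\pm f^{n+1}$. This is harmless for the estimates (since $K$ is bounded on $L^2_v$), but the paper's choice makes the well-posedness of each linear step and the bound \eqref{10.4} depend on the \emph{given} $h=f^n$ rather than on the unknown, which is cleaner. Second, the paper mollifies the initial data along the iteration ($f^{n+1}(0)=f_0^{1/(n+1)}$) and shows $(f_0^{1/(n+1)})$ is itself Cauchy, which feeds the contraction estimate; you keep $f_0$ fixed and start from $f^0\equiv0$, which is acceptable but should be stated carefully. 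The fix to your argument is straightforward and is exactly the paper's choice: retain the self-consistent $\phi^{n+1}$ in the term $\pm\na_x\phi^{n+1}\cdot v\mu^{1/2}$ while freezing only $\phi^n$ in the transport; then Theorem \ref{Thm41} and all of Lemmas \ref{Lem24}--\ref{Lem26} apply verbatim and your Gronwall-plus-contraction plan goes through.
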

	
	We begin with the following linear inhomogeneous problem on the union of cubes:
	\begin{equation}\label{b154}\left\{
		\begin{aligned}
			&\partial_tf_\pm + v\cdot \nabla_xf_\pm  \pm \frac{1}{2}\nabla_x\psi\cdot vf_\pm  \mp\nabla_x\psi\cdot\nabla_vf_\pm \pm \nabla_x\phi\cdot v\mu^{1/2} - A_\pm f \\
			&\qquad\qquad\qquad\qquad\qquad\qquad\qquad\qquad\qquad= \Gamma_{\pm}(g,h)+Kh,\\
			&-\Delta_x\phi = \int_{\R^3}(f_+-f_-)\mu^{1/2}\,dv,\\ 
			&f(0,x,v) = f_0(x,v),\quad E(0,x)=E_0(x),\\ 
			&{f}(t,x,R_xv) = {f}(t,x,v),\text{ on }\gamma_-,\\
			&\partial_{n}\phi = 0,\ \text{ on } x\in\pa\Omega, 
		\end{aligned}\right.
	\end{equation}
	for a given $h=h(t,x,v)$ and $\psi=\psi(t,x)$.

	\begin{Lem}\label{bLem82}
		Let the same assumption in \eqref{blocalsolution} be satisfied. 
		There exists $\varepsilon_0>0$, $T_0>0$ such that if 
		\begin{multline}\label{b156a}
			\sum_{|\alpha|+|\beta|\le 3}\Big\{\|{w_{l,\nu}(\alpha,\beta)\partial^\alpha_\beta f_0}\|_{L^2_{x,v}}+\|{\partial^\alpha_\beta h}\|_{L^2_{T_0}L^2_{x}L^2_{D,w_{l,\nu}(\alpha,\beta)}}\\+\|{w_{l,\nu}(\alpha,\beta)\partial^\alpha_\beta h}\|_{L^\infty_{T_0}L^2_{x,v}}+\|{\partial^\alpha\nabla_x\psi}\|_{L^\infty_{T_0}L^2_{x}}\Big\}+\|\pa_t{\psi}\|_{L^\infty_{T_0}L^\infty_{x}}\le \varepsilon_0,
		\end{multline}
		then the initial boundary value problem \eqref{b154} admits a unique solution $f=f(t,x,v)$ on $\Omega\times\R^3$ satisfying 
		\begin{multline}\label{b158a}
			\sup_{0\le t\le T_0}\E_\nu(t)+\int^{T_0}_0\D_\nu(t)\,dt+\|\pa_t{\phi}\|_{L^\infty_{T_0}L^\infty_{x}}\lesssim \sum_{|\alpha|+|\beta|\le  3}\big(\|{w_{l,\nu}(\alpha,\beta)\partial^\alpha_\beta f_0}\|^2_{L^2_{x,v}}+\|{\partial^\alpha E_0}\|^2_{L^2_{x}}\big)\\+T_0^{1/2}\sum_{|\alpha|+|\beta|\le 3}\|{w_{l,\nu}(\alpha,\beta)\partial^\alpha_\beta h}\|^2_{L^\infty_{T_0}L^2_{x,v}}, 
		\end{multline}
		where $\E_\nu(t)$ and $\D_\nu(t)$ are defined by \eqref{E1} and \eqref{D} respectively.
	\end{Lem}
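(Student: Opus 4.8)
The plan is to realize $f$ as the limit of a linear iteration that freezes the self-consistent field, to close a uniform weighted $H^3_{x,v}$ energy estimate on a short interval $[0,T_0]$, and to pass to the limit via a contraction in the unweighted $L^2_{x,v}$ norm. First I would set $f^0\equiv f_0$ and, given $f^n$, define $\phi^n$ as the zero-mean solution of the Neumann problem $-\Delta_x\phi^n=\int_{\R^3}(f^n_+-f^n_-)\mu^{1/2}\,dv$ in $\Omega$, $\pa_n\phi^n=0$ on $\pa\Omega$; its solvability and higher elliptic regularity follow from the mass conservation of $f^n$, Poincar\'e's inequality and \cite[Lemma 4.4.3.1]{Grisvard1985}, exactly as in Theorem \ref{Thm41}. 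Then $f^{n+1}$ is defined to solve the genuinely linear kinetic equation obtained from \eqref{b154} by replacing $\na_x\phi$ with the now-known $\na_x\phi^n$, keeping the initial data, the specular condition on $\gamma_-$ and $\pa_n\phi^{n+1}=0$. Existence of $f^{n+1}$ in the energy space is a purely linear statement: for the Landau case one solves the transport equation with the degenerate operator $A_\pm$ by the usual vanishing-viscosity/Galerkin approximation (add $\ve\Delta_v$, derive $\ve$-uniform bounds compatible with specular reflection, let $\ve\to0$), which I would take as standard; the Boltzmann case is analogous with the fractional-diffusion coercivity of $A_\pm$. I would also note that the only structural facts used in Lemmas \ref{Lem24}, \ref{Lem25} and \ref{Lem26} are the reflection identities for the transport term, for $A_\pm$ (through the form \eqref{phiij} of $\phi^{ij}$, cf.\ the claim \eqref{2.10} and \eqref{39}), for $\Gamma_\pm$, and the Neumann condition \eqref{4.4}; all hold for the $f^{n+1}$-equation, so $\pa^\al_\beta f^{n+1}(x,R_xv)=\pm\pa^\al_\beta f^{n+1}(x,v)$ on $\pa\Omega$ with the parity-in-$v_i$ sign, the vanishing boundary values $b^{n+1}_i=\pa_{x_i}[a^{n+1}_\pm,b^{n+1}_j,c^{n+1}]=\pa_{x_ix_i}b^{n+1}_i=\pa_{x_ix_ix_i}[a^{n+1}_\pm,b^{n+1}_j,c^{n+1}]=0$ on $\Gamma_i$, and $\pa_{x_ix_ix_i}\phi^n=0$ on $\Gamma_i$.

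The core is a uniform-in-$n$ energy estimate. I would apply $\pa^\al_\beta$ with $|\al|+|\beta|\le 3$ (restricting $\pa^\al=\pa_{x_ix_i}$ when $|\al|=2$) to the $f^{n+1}$-equation, test against $e^{\pm\phi^{n+1}}w_{2l,2\nu}(2\al,2\beta)\pa^\al_\beta f^{n+1}_\pm$, and sum over $\pm$ and over derivatives, following Steps~1 and 2 of the proof of Theorem \ref{Theorem_bounded}. The transport term together with the factor $e^{\pm\phi^{n+1}}$ annihilates the boundary integral by the compatible specular conditions above; the coercivity \eqref{36a}, \eqref{36aw} of $A_\pm$ produces the weighted $L^2_D$ dissipation (here \eqref{36b}--\eqref{36cw} are not needed); the forcing $\Gamma_\pm(g,h)+Kh$ (with $g$ enjoying the same bounds as $h$) is absorbed by $\eta$ times dissipation plus $C_\eta\sum\|w_{l,\nu}(\al,\beta)\pa^\al_\beta h\|_{L^2_xL^2_D}^2$ via \eqref{37a} and the boundedness of $K$; the term $\pm(\pa^\al\na_x\phi^n\cdot v\mu^{1/2},\cdot)$ is controlled crudely by the elliptic bound $\|\pa^\al\na_x\phi^n\|_{L^2_x}\lesssim\|\pa^\al(a^n_+-a^n_-)\|_{L^2_x}\lesssim\sqrt{\E_\nu^n}$; and the $\na_x\psi$ terms are bounded by Lemma \ref{Lem91}-type estimates times $\|\pa^\al\na_x\psi\|_{L^\infty_{T_0}L^2_x}+\|\pa_t\psi\|_{L^\infty_{T_0}L^\infty_x}\le\ve_0$. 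The $\|\pa^\al E^{n+1}\|_{L^2_x}^2$ pieces of $\D_\nu^{n+1}$ and the lower-order $L^2(B_C)$ remainders from \eqref{36aw} are all $\lesssim\E_\nu^{n+1}$, so after the hierarchy of small constants $\kappa,\kappa_{|\beta|}$ exactly as in \eqref{8.24}, one obtains schematically
\begin{equation*}
\pa_t\E_\nu^{n+1}(t)+\lambda\D_\nu^{n+1}(t)\lesssim \ve_0\,\D_\nu^{n+1}(t)+\E_\nu^{n}(t)+\sum_{|\al|+|\beta|\le 3}\|w_{l,\nu}(\al,\beta)\pa^\al_\beta h\|_{L^2_xL^2_v}^2.
\end{equation*}
Choosing $\ve_0$ small to absorb $\ve_0\D_\nu^{n+1}$, integrating on $[0,t]\subset[0,T_0]$, and using $\int_0^{T_0}\E_\nu^n\lesssim T_0\sup_{[0,T_0]}\E_\nu^n$ together with a Cauchy--Schwarz in time for the source, one gets for $T_0$ small a bound $\sup_{[0,T_0]}\E_\nu^{n+1}+\int_0^{T_0}\D_\nu^{n+1}\le C\big(\sum(\|w_{l,\nu}(\al,\beta)\pa^\al_\beta f_0\|_{L^2_{x,v}}^2+\|\pa^\al E_0\|_{L^2_x}^2)+T_0^{1/2}\|w_{l,\nu}h\|_{L^\infty_{T_0}L^2_{x,v}}^2\big)+\tfrac12\sup_{[0,T_0]}\E_\nu^n$, so by induction every iterate obeys \eqref{b158a} with a constant independent of $n$. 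The bound on $\|\pa_t\phi^{n+1}\|_{L^\infty_{T_0}L^\infty_x}$ then follows as in \eqref{8.34}: $\pa_t\phi^{n+1}$ is reconstructed from $\pa_t(a^{n+1}_+-a^{n+1}_-)=-\na_x\cdot G^{n+1}$ through \eqref{98a}$_1$ and the Neumann elliptic estimate, hence is controlled by $\|\na_x\{\I-\P\}f^{n+1}\|_{L^2_xL^2_D}\lesssim\sqrt{\E_\nu^{n+1}}$.

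It remains to pass to the limit. The difference $f^{n+1}-f^n$ solves the same linear equation with forcing $\pm(\na_x\phi^n-\na_x\phi^{n-1})\cdot v\mu^{1/2}$; running the energy identity at the lowest order and \emph{without weights}, using $\|\na_x(\phi^n-\phi^{n-1})\|_{L^2_x}\lesssim\|a^n-a^{n-1}\|_{L^2_x}$ and the $\ve_0$- and $T_0$-smallness, yields $\sup_{[0,T_0]}\|f^{n+1}-f^n\|_{L^2_{x,v}}^2\le\tfrac12\sup_{[0,T_0]}\|f^n-f^{n-1}\|_{L^2_{x,v}}^2$. Hence $\{f^n\}$ is Cauchy in $C([0,T_0];L^2_{x,v})$, and interpolation with the uniform high-order bound of the previous paragraph furnishes a limit $f$ solving \eqref{b154} and satisfying \eqref{b158a}; the same difference estimate gives uniqueness. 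Positivity of $F=\mu+\mu^{1/2}f$ is inherited from \cite[Lemma 12]{Guo2012}.

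I expect the main obstacle to be the uniform estimate of the second paragraph: one must carry the full weighted $H^3_{x,v}$ computation — the $e^{\pm\phi}$ device, the soft-potential weight shift $w_{l,\nu}(\al,\beta)\lesssim\<v\>^{\gamma+2s}w_{l,\nu}(\al+e_i,\beta-e_i)$, the $A_\pm$ coercivity at every derivative level, and the frozen field $\na_x\phi^n$ — while confirming at each step that the boundary integrals vanish through the compatible specular conditions (which hinge on the reflection algebra of $A_\pm$ and $\Gamma_\pm$ from Lemmas \ref{Lem24}--\ref{Lem26}), and then extracting the $T_0^{1/2}$ gain from the short interval that dominates both the $\E_\nu^n$ feedback coming from the Poisson coupling and the linear source measured in \eqref{b156a}.
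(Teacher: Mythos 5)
Your overall plan — approximate the linear, Poisson-coupled system \eqref{b154} by an iteration that freezes the self-consistent potential, derive a uniform weighted $H^3_{x,v}$ estimate using the specular boundary algebra of Lemmas \ref{Lem24}--\ref{Lem26}, and then contract in $L^2_{x,v}$ — is a legitimate and in fact more explicit route than the paper's. The paper derives the differential inequality \eqref{10.3} directly for \eqref{b154} (no nested iteration) and then appeals to standard linear evolution theory for existence; your iteration is essentially the one the paper reserves for Theorem \ref{blocalsolution}, and pushing it down to the lemma level is fine, if more laborious.

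There is, however, a genuine error in the choice of test weight, and for soft potentials it would break the estimate. In \eqref{b154} the force-field terms are $\pm\tfrac12\nabla_x\psi\cdot v f_\pm$ and $\mp\nabla_x\psi\cdot\nabla_v f_\pm$, written with the \emph{given} potential $\psi$; the self-consistent $\phi$ enters only through the linear source $\pm\nabla_x\phi\cdot v\mu^{1/2}$. The exponential-weight device must therefore use $e^{\pm\psi}$, as the paper does: it is precisely the cross term
\begin{equation*}
\big(v\cdot\nabla_x\pa^\alpha_\beta f_\pm,\ e^{\pm\psi}w^2\pa^\alpha_\beta f_\pm\big)_{L^2_{x,v}}
= \tfrac12\int_{\pa\Omega}\!\!\int_{\R^3} v\cdot n\,e^{\pm\psi}w^2|\pa^\alpha_\beta f_\pm|^2
\ \mp\ \tfrac12\int_{\Omega}\!\!\int_{\R^3} v\cdot\nabla_x\psi\,e^{\pm\psi}w^2|\pa^\alpha_\beta f_\pm|^2
\end{equation*}
that cancels the $\pm\tfrac12\nabla_x\psi\cdot v$ bulk contribution exactly. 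You instead test against $e^{\pm\phi^{n+1}}w^2\pa^\alpha_\beta f^{n+1}_\pm$, which leaves the uncancelled term $\pm\tfrac12\int v\cdot\nabla_x(\psi-\phi^{n+1})\,e^{\pm\phi^{n+1}}w^2|\pa^\alpha_\beta f^{n+1}_\pm|^2$. This carries an extra factor of order $\<v\>$, while the dissipation $|\cdot|_{L^2_D}$ for soft potentials ($\gamma+2s<0$, resp.\ $\gamma+2<0$) only controls $\<v\>^{(\gamma+2s)/2}$, so no amount of smallness of $\psi-\phi^{n+1}$ allows absorption; the cancellation must be exact. The fix is simply to test with $e^{\pm\psi}$ throughout (carrying the frozen $\nabla_x\phi^n$ only in the source term $\pm\nabla_x\phi^n\cdot v\mu^{1/2}$, exactly as the paper does with the term $I_4$/$J_5$), but as written your estimate does not close in the soft-potential regime.

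A secondary caution: within your frozen-$\phi$ iteration you need zero mean of $\int_{\R^3}(f^{n+1}_+-f^{n+1}_-)\mu^{1/2}\,dv$ to solve the Neumann problem for $\phi^{n+1}$. This is not automatic from the $f^{n+1}$-equation because $A_\pm$ and $K$ (unlike $L_\pm$) need not individually annihilate $\mu^{1/2}$, and the source $\Gamma_\pm(g,h)+Kh$ is prescribed. You should either verify this compatibility explicitly, or replace the pure Neumann problem by the one with a compensating constant. (The paper sidesteps this by not running a nested iteration inside the lemma.)
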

	\begin{proof}

		We consider equation \eqref{b154}$_1$ with initial data $(f_0,E_0)$. 
		Similar to \eqref{7.4}, applying $\pa^\alpha$ to \eqref{b154}$_1$ and taking inner product of the resultant equation with $w_{2l,2\nu}(2\alpha,0)e^{\pm\psi}\pa^\alpha f_\pm$, we have 
		\begin{multline*}
			\frac{1}{2}\partial_t\|w_{l,\nu}(\alpha,0)\pa^\alpha f_\pm\|_{L^2_{x,v}}^2 \mp \frac{1}{2}\big(\pa_t\psi\,w_{l,\nu}(\alpha,0)\pa^\alpha f_\pm,e^{\pm\psi}w_{l,\nu}(\alpha,0)\pa^\alpha f_\pm\psi\big)_{L^2_{x,v}}\\
			\pm \frac{1}{2}\sum_{\alpha_1<\alpha}\Big(\pa^{\alpha-\alpha_1}\nabla_x\psi\cdot v\pa^{\alpha_1}f_\pm,e^{\pm\psi}w_{2l,2\nu}(2\alpha,0)\pa^\alpha f_\pm\Big)_{L^2_{x,v}}\\
			\mp \big(\pa^\alpha (\nabla_x\psi\cdot \nabla_vf_\pm),e^{\pm\psi}w_{2l,2\nu}(2\alpha,0)\pa^\alpha f_\pm\big)_{L^2_{x,v}}
			\pm \big(\pa^\alpha\nabla_x\psi\cdot v\mu^{1/2},e^{\pm\psi}w_{2l,2\nu}(2\alpha,0)\pa^\alpha f_\pm\big)_{L^2_{x,v}}\\
			- \big(L_\pm \pa^\alpha f,w_{2l,2\nu}(2\alpha,0)\pa^\alpha f_\pm\big)_{L^2_{x,v}}
			- \big(L_\pm \pa^\alpha f,(e^{\pm\psi}-1)w_{2l,2\nu}(2\alpha,0)\pa^\alpha f_\pm\big)_{L^2_{x,v}}\\
			= \big(\pa^\alpha\Gamma_{\pm}(f,f),e^{\pm\psi}w_{2l,2\nu}(2\alpha,0)\pa^\alpha f_\pm\big)_{L^2_{x,v}}+\big(\pa^\alpha h,e^{\pm\psi}w_{2l,2\nu}(2\alpha,0)\pa^\alpha f_\pm\big)_{L^2_{x,v}}.
		\end{multline*}
		Similar to \eqref{7.13}, applying $\pa^\alpha_\beta$ to \eqref{b154}$_1$ and taking inner product of the resultant equation with $w_{2l,2\nu}(2\alpha,2\beta)e^{\pm\psi}\pa^\alpha_\beta f_\pm$, we have 
		\begin{multline*}
			\frac{1}{2}\partial_t\|w_{l,\nu}(\alpha,\beta)\pa^\alpha_\beta f_\pm\|_{L^2_{x,v}}^2 
			\mp\big(\pa_t\psi\, \pa^\alpha_\beta f_\pm,e^{\pm\psi}w_{2l,2\nu}(2\alpha,2\beta)\pa^\alpha_\beta f_\pm\big)_{L^2_{x,v}}\\
			+	\sum_{|\beta_1|=1}\big(\pa_{\beta_1} v\cdot\nabla_x\pa^\alpha_{\beta-\beta_1} f_\pm,e^{\pm\psi}w_{2l,2\nu}(2\alpha,2\beta)\pa^\alpha_\beta f_\pm\big)_{L^2_{x,v}}\\
			\pm \frac{1}{2}\sum_{\alpha_1+\beta_1<\alpha+\beta}\Big(\pa^{\alpha}_\beta(\nabla_x\psi\cdot vf_\pm),e^{\pm\psi}w_{2l,2\nu}(2\alpha,2\beta)\pa^\alpha_\beta f_\pm\Big)_{L^2_{x,v}}\\
			\mp \big(\pa^\alpha_\beta (\nabla_x\psi\cdot \nabla_vf_\pm),e^{\pm\psi}w_{2l,2\nu}(2\alpha,2\beta)\pa^\alpha_\beta f_\pm\big)_{L^2_{x,v}}\\
			\pm \big(\pa^\alpha_\beta(\nabla_x\psi\cdot v\mu^{1/2}),e^{\pm\psi}w_{2l,2\nu}(2\alpha,2\beta)\pa^\alpha_\beta f_\pm\big)_{L^2_{x,v}}\\
			- \big(\pa_\beta L_\pm \pa^\alpha f,w_{2l,2\nu}(2\alpha,2\beta)\pa^\alpha_\beta f_\pm\big)_{L^2_{x,v}}\\
			- \big(\pa_\beta L_\pm \pa^\alpha f,(e^{\pm\psi}-1)w_{2l,2\nu}(2\alpha,2\beta)\pa^\alpha_\beta f_\pm\big)_{L^2_{x,v}}\\
			= \big(\pa^\alpha_\beta\Gamma_{\pm}(f,f),e^{\pm\psi}w_{2l,2\nu}(2\alpha,2\beta)\pa^\alpha_\beta f_\pm\big)_{L^2_{x,v}}
			+\big(\pa^\alpha_\beta h,e^{\pm\psi}w_{2l,2\nu}(2\alpha,2\beta)\pa^\alpha_\beta f_\pm\big)_{L^2_{x,v}}.
		\end{multline*}
		Following the similar argument from \eqref{71} to \eqref{8.24}, applying smallness \eqref{b156a}, using the estimate \eqref{36a} for $A_\pm$ to replace the estimates on $L_\pm$ and macroscopic estimates, we have 
		\begin{multline}\label{10.3}
			\pa_t\E_\nu(t) + \lambda \D_\nu(t) \lesssim \|\pa_t\psi\|_{L^\infty_x}\E_\nu(t) + \sum_{|\alpha|+|\beta|\le 3} \|w_{l,\nu}(\alpha,\beta)\pa^\alpha_\beta h\|^2_{L^2_{x,v}} + \E_\nu(t)\\ + \sum_{|\alpha|+|\beta|\le 3} \|\pa^\alpha_\beta h\|^2_{L^2_{x}L^2_{D,w_{l,\nu}(\alpha,\beta)}}\sqrt{\E_\nu(t)\D_\nu(t)}. 
		\end{multline}
		By using \eqref{b156a}, we have 
		$
		\|\pa_t\psi\|_{L^\infty_x}\lesssim \ve_0. 
		$
		Then solving \eqref{10.3}, we obtain 
		\begin{align}\label{10.4}
			\sup_{0\le t\le T}\big(e^{Ct}\E_\nu(t)\big) +\lambda\int^T_0\D_\nu(t)\,dt \lesssim  T\sum_{|\alpha|+|\beta|\le 3} \sup_{0\le t\le T}\|w_{l,\nu}(\alpha,\beta)\pa^\alpha_\beta h\|^2_{L^2_{x,v}},
		\end{align}
		for some large constant $C>0$. Since \eqref{b154} is a linear equation, with \eqref{10.4} in hand, it's standard to apply the theory for linear evolution equation to find the local-in-time existence for \eqref{b154}. In fact, we can obtain the local-in-time solution $(f,\phi)$ to \eqref{b154} with estimate: for some $T_0>0$, 
		\begin{align*}
			\sup_{0\le t\le T_0}\big(e^{Ct}\E_\nu(t)\big) +\lambda\int^{T_0}_0\D_\nu(t)\,dt \lesssim T_0\sum_{|\alpha|+|\beta|\le 3} \sup_{0\le t\le T_0}\|w_{l,\nu}(\alpha,\beta)\pa^\alpha_\beta h\|^2_{L^2_{x,v}}. 
		\end{align*}
		Similar to \eqref{8.34}, we can obtain 
		\begin{align*}
			\sup_{0\le t\le T_0}\|\pa_t\phi\|_{L^\infty_x}\lesssim \sup_{0\le t\le T_0}\sqrt{\E(t)} \lesssim  T^{1/2}_0\sup_{0\le t\le T_0}\|w_{l,\nu}(\alpha,\beta)\pa^\alpha_\beta h\|^2_{L^2_{x,v}}.
		\end{align*}
		The above two estimates implies \eqref{b158a}. This completes Lemma \ref{blocalsolution}.

		\end{proof}
	
	\begin{proof}[Proof of Theorem \ref{blocalsolution}]
		Write $(f_0^\ve,E_0^\ve)$ to be the mollification of $(f_0,E_0)$ as the following. 
				Let $\eta_v$ and $\eta_x$ be the standard mollifier in $\R^3$ and $\Omega$: $\eta_v,\eta_x\in C^\infty_c$, $0\le \eta_v,\eta_x\le 1$, $\int\zeta_vdv=\int\zeta_xdx=1$. For $\varepsilon>0$, let $\eta^\varepsilon_v(v) = \ve^{-3}\zeta_v{(\ve^{-1}v)}$ and $\eta_x^\ve(x)=\ve^{-3}\zeta_x{(\ve^{-1}x)}$. Then we mollify the initial data as $f_0^\ve=f_0*\eta^\ve_v*\eta^\ve_x$, $E^\ve_0=E_0*\eta_x^\ve$. Then 
				\begin{align*}
					\|\pa^\alpha_\beta{f_0^\ve}\|_{L^2_{x,v}}\le \|\pa^\alpha_\beta f_0*\eta^\ve_v*\eta^\ve_x\|_{L^2_{x_1,v}}\lesssim \|\eta_v\|_{L^1_{v}}\|{\eta_x}\|_{L^1_{x}}\|\pa^\alpha_\beta{f_0}\|_{L^2_{x,v}}\le \|\pa^\alpha_\beta{f_0}\|_{L^2_{x,v}}, 
				\end{align*}
				and similarly,
				\begin{align*}
					\|\pa^\alpha{E_0^\ve}\|_{L^2_{x_1}}\le \|\pa^\alpha{ E_0}\|_{L^2_{x}}.
				\end{align*}
			Also, $f^\ve_0\to f^\ve_0$ and $E^\ve_0\to E_0$ in $L^2_{x,v}$ and $L^2_x$ respectively as $\ve\to 0$. 
		We now construct the approximation solution sequence as
		 \begin{align*}
			\{(f^n(t,x,v),\phi^n(t,x))\}^\infty_{n=0}
		\end{align*} 
	by using the following iterative scheme: 
		\begin{equation*}\left\{
			\begin{aligned}
				&\partial_tf^{n+1}_\pm + v\cdot \nabla_xf^{n+1}_\pm  \pm \frac{1}{2}\nabla_x\phi^n\cdot vf^{n+1}_\pm  \mp\nabla_x\phi^n\cdot\nabla_vf^{n+1}_\pm \\&\qquad\qquad\pm \nabla_x\phi^{n+1}\cdot v\mu^{1/2} - A_\pm f^{n+1} = \Gamma_{\pm}(f^n,f^{n+1})+Kf^n,\\
				&-\Delta_x\phi^{n+1} = \int_{\R^3}(f^{n+1}_+-f^{n+1}_-)\mu^{1/2}\,dv,\\ 
				&f^{n+1}(0,x,v) = f^{\frac{1}{n+1}}_0(x,v),\quad E^{n+1}(0,x)=E^{\frac{1}{n+1}}_0(x),\\ 
				&{f}^{n+1}(t,x,R_xv) = {f}^{n+1}(t,x,v),\text{ on }\gamma_-,\\
				&\partial_{n}\phi^{n+1} = 0,\ \text{ on } x\in\pa\Omega, 
			\end{aligned}\right.
		\end{equation*}for $n=0,1,2,\cdots$, where we set $f^0(t,x,v)=f_0(x,v)$ and $\phi^0$ given by $-\Delta_x\phi^0=\int_{\R^3}(f^0_+-f^0_-)\mu^{1/2}\,dv$ and $\pa_n\phi^0=0$ on $\pa\Omega$. 
		With Lemma \ref{bLem82}, it is a standard procedure to apply the induction argument to show that there exists $\ve_0>0$ and $T_0>0$ such that if 
		\begin{align*}
			\sum_{|\alpha|\le 3}\big(\|{w\partial^\alpha f_0}\|^2_{L^2_{x,v}}+\|{\partial^\alpha E_0}\|^2_{L^2_{x}}\big)\le \ve_0,
		\end{align*}
		then the approximate solution sequence $\{f^n\}$ is well-defined with estimate
		\begin{align}\label{5.8}\notag
			&\sup_{0\le t\le T_0}\E_\nu(f^n,t)+\int^{T_0}_0\D_\nu(f^n,t)\,dt+\|\pa_t{\phi^n}\|_{L^\infty_{T_0}L^\infty_{x}}\\
			&\notag\lesssim \sum_{|\alpha|+|\beta|\le  3}\big(\|{w_{l,\nu}(\alpha,\beta)\partial^\alpha_\beta f_0}\|^2_{L^2_{x,v}}+\|{\partial^\alpha E_0}\|^2_{L^2_{x}}\big)+T_0^{1/2}\sum_{|\alpha|+|\beta|\le 3}\|{w_{l,\nu}(\alpha,\beta)\partial^\alpha_\beta f^{n-1}}\|^2_{L^\infty_{T_0}L^2_{x,v}}\\
			&\lesssim \sum_{k=0}^nT_0^{k/2}\sum_{|\alpha|+|\beta|\le  3}\big(\|{w_{l,\nu}(\alpha,\beta)\partial^\alpha_\beta f_0}\|^2_{L^2_{x,v}}+\|{\partial^\alpha E_0}\|^2_{L^2_{x}}\big) \lesssim \ve_0^2, 
		\end{align}
		by choosing $T_0>0$ small enough, where we write $\E_\nu(f^n,t)$ and $\D_\nu(f^n,t)$ to show the dependence on $f^n$. 
		Notice that $f^{n+1}-f^n$ solves 
		\begin{align*}
			&\partial_t(f^{n+1}_\pm-f^n_\pm) + v\cdot \nabla_x(f^{n+1}_\pm-f^n_\pm)  \pm \frac{1}{2}\nabla_x\phi^n\cdot v(f^{n+1}_\pm -f^n_\pm) \pm \frac{1}{2}(\nabla_x\phi^n-\nabla_x\phi^{n-1})\cdot vf^n_\pm\\
			&\quad \mp\nabla_x\phi^n\cdot\nabla_v(f^{n+1}_\pm -f^n_\pm)
			\mp  (\nabla_x\phi^n-\nabla_x\phi^{n-1})\cdot\nabla_vf^n_\pm
			\pm (\nabla_x\phi^{n+1}-\nabla_x\phi^n)\cdot v\mu^{1/2}\\
			&\quad- A_\pm (f^{n+1}-f^n) = \Gamma_{\pm}(f^n,f^{n+1}-f^n)+\Gamma_{\pm}(f^n-f^{n-1},f^n)+K(f^n-f^{n-1}),
		\end{align*}
		for $n=1,2,3,\cdots$. Using the method for deriving \eqref{b158a} and \eqref{5.8}; see also \cite{Guo2012}, we know that $f^{n+1}-f^n$ is Cauchy sequence with estimate
		\begin{multline*}
			\sup_{0\le t\le T_0}\E_\nu(f^{n+1}-f^n,t)+\int^{T_0}_0\D_\nu(f^{n+1}-f^n,t)\,dt \\\lesssim \sum_{|\alpha|+|\beta|\le  3}\big(\|{w_{l,\nu}(\alpha,\beta)\partial^\alpha_\beta (f^{\frac{1}{n+1}}_0-f^{\frac{1}{n}}_0)}\|^2_{L^2_{x,v}}+\|{\partial^\alpha (E^{\frac{1}{n+1}}_0-E^{\frac{1}{n}}_0)}\|^2_{L^2_{x}}\to 0,\text{ as } n\to\infty.
		\end{multline*}
		Then the limit function $f(t,x,v)$ is indeed a unique local-in-time solution to \eqref{1}, \eqref{specular} and \eqref{Neumann} satisfying estimate \eqref{b169}. 
		For the positivity, we refer to the argument from \cite[Lemma 12, page 800]{Guo2012}; the details are omitted for brevity. The proof of Theorem \ref{blocalsolution} is complete. 	
		\end{proof}
	
	\medskip
	\noindent {\bf Acknowledgements.} 
	D.-Q Deng was supported by Direct Grant from BIMSA. 
	
	\providecommand{\bysame}{\leavevmode\hbox to3em{\hrulefill}\thinspace}
	\providecommand{\MR}{\relax\ifhmode\unskip\space\fi MR }
	\providecommand{\MRhref}[2]{%
		\href{http://www.ams.org/mathscinet-getitem?mr=#1}{#2}
	}
	\providecommand{\href}[2]{#2}

\end{document}